\documentclass[11pt]{amsart}
\usepackage{amssymb}
\usepackage{enumerate}
\usepackage{amsmath,stmaryrd,amssymb,amsfonts,amsthm,
	latexsym, amscd, epsfig, epic,color,bbm}
\usepackage{mathtools, graphics, epstopdf} 
\usepackage{extarrows}
\usepackage{enumitem}
\usepackage{enumerate}
\usepackage{mathrsfs}
\usepackage{eucal}
\usepackage{eufrak}
\usepackage{xspace}
\usepackage{a4wide}
\usepackage{colordvi}
\usepackage{comment}
\usepackage{hyperref,eucal}
\usepackage{tikz}\usetikzlibrary{matrix,arrows}
\usepackage{imakeidx}
\makeindex

\newtheorem{Theorem}{Theorem}[section]
\newtheorem{theorem}[Theorem]{Theorem}
\newtheorem{corollary}[Theorem]{Corollary}
\newtheorem{lemma}[Theorem]{Lemma}
\newtheorem{definition}[Theorem]{Definition}
\newtheorem{conjecture}[Theorem]{Conjecture}
\newtheorem{proposition}[Theorem]{Proposition}


\theoremstyle{plain}

\theoremstyle{example}
\newtheorem{example}{Example}[section]

\theoremstyle{openproblem}

\def\W{\CMcal{W}}

\def\O{\CMcal{O}}

\def\E{\CMcal{E}}
\def\F{\CMcal{F}}
\def\T{\CMcal{T}}

\def\R{\CMcal{R}}

\def\A{\mathcal{A}}

\newcommand{\bh}{\mathbf{h}}

\def\emma{\color{black}}

\theoremstyle{remark}
\newtheorem{remark}{Remark}
\numberwithin{equation}{section}
\numberwithin{figure}{section}

%

\title[Towards equivalent thickened ribbon Schur functions]{Towards equivalent thickened ribbon Schur functions}

\author{Emma Yu Jin}
\address{School of Mathematical Sciences, Xiamen University, Xiamen, Fujian 361005, P.R. China}
\email{yjin@xmu.edu.cn}

\author{Shu Xiao Li}
\address{
		Research Center for Mathematics and Interdisciplinary Sciences,
		Shandong University \& Frontiers Science Center for Nonlinear Expectations, Ministry of Education,
		Qingdao, Shandong 266237, P.R. China}
\email{lishuxiao@sdu.edu.cn}

\subjclass[2010]{05E05}

\keywords{symmetric functions, skew Schur equality, ribbon Schur functions, thickened ribbons}

\begin{document}
	\begin{abstract}
		
		Two skew diagrams are defined to be equivalent if their corresponding skew Schur functions are equal. The equivalence classes of ribbons (edgewise connected skew diagrams without a $2\times 2$ block of boxes) have been classified by Billera, Thomas and van Willigenburg in 2006. 
		
		In this paper, we provide a complete characterization of the equivalence classes of connected skew diagrams with exactly one inclusion-maximal $2\times m$ or $m\times 2$ block of boxes for every $m\ge 2$. In particular, the possible sizes of such equivalence classes are one, two, or four, demonstrating that a single $2\times m$ or $m\times 2$ block dramatically reduces the sizes of equivalence classes. Our result confirms special cases of the elusive conjecture on equivalent connected skew diagrams proposed by McNamara and van Willigenburg in 2009.
		
	\end{abstract}
	\maketitle
	\section{Introduction and main results}
	Littlewood-Richardson coefficients (LR coefficients) appear in a wide variety of areas in mathematics including representation theory, combinatorics and algebraic geometry, etc; see, for example, \cite{FH:91,Fulton:97,Sagan:00,Stanley,Stembridge:02}. From a combinatorial viewpoint, the LR coefficients are defined as the structure constants $c_{\mu\nu}^{\lambda}$ appearing in the Schur expansion of the product of two Schur functions:
	\begin{align}\label{E:LReqn}
		s_{\mu}s_{\nu}&=\sum_{\lambda}c_{\mu\nu}^{\lambda}s_{\lambda}.
	\end{align}
	Several combinatorial interpretations of these coefficients have been discovered, including the Young tableaux model \cite{Fulton:97}, the honeycomb model \cite{KnuTao:99} and the puzzle model \cite{KTW:04}, {\emma all of which prove} that the LR coefficients are always nonnegative. {\emma Additionally}, the LR coefficients {\emma arise} in the decomposition of tensor products of irreducible $GL_n$-modules, and count intersection points in the Schubert calculus on a Grassmannian.

	
	
	Motivated by these rich connections, it becomes important to investigate properties of these nonnegative coefficients such as
	equality among the LR coefficients. A natural and promising {\emma approach} to this question is to {\emma study} when two skew Schur functions are equal \cite{BTvW,G:09,MvW,OH:24,RSvW,Y:17}. {\emma More precisely}, $s_{\lambda/\mu}=s_{\rho/\sigma}$ if and only if $c_{\mu\nu}^{\lambda}=c_{\sigma\nu}^{\rho}$ for any $\nu$.
	This {\emma follows immediately from} an equivalent form of (\ref{E:LReqn}), i.e.,
	\begin{align*}
		s_{\lambda/\mu}&=\sum_{\nu}c_{\mu\nu}^{\lambda}s_{\nu},
	\end{align*}
	and the fact that the set of Schur functions $s_{\nu}$ where $\nu$ {\emma ranges over all partitions} of $n$ forms a basis of the space $\Lambda^n$ of homogeneous symmetric functions of degree $n$. 
	
	Another consequence of {\emma the equality of} skew Schur functions is an equality of the {\emma corresponding} Kostka numbers. Expanding $s_{\lambda/\mu}$ in {\emma the} monomial symmetric basis $m_{\nu}$ gives
	\begin{align*}
		s_{\lambda/\mu}&=\sum_{\nu}K_{\lambda/\mu}^{\nu}m_{\nu},
	\end{align*}
	where $K_{\lambda/\mu}^{\nu}$ is the Kostka number counting semistandard Young tableaux of shape $\lambda/\mu$ and with {\emma weight} $\nu$ (see \cite{Stanley}). Since the monomial symmetric functions $m_{\nu}$ {\emma form} a basis of $\Lambda^n$, {\emma it follows immediately} that $s_{\lambda/\mu}=s_{\rho/\sigma}$ if and only if $K_{\lambda/\mu}^{\nu}=K_{\rho/\sigma}^{\nu}$ for all $\nu$.
	
	For convenience, we use the following definition.
	\begin{definition}[\cite{BTvW,MvW,RSvW}]\label{Def:equi1}
		Two skew diagrams $\CMcal{F}$ and $\E$ are called {\em equivalent}, denoted $\CMcal{F}\sim \E$, if their corresponding skew Schur functions are equal, i.e., $s_{\CMcal{F}}=s_{\E}$. The equivalence class of $\CMcal{F}$ is denoted by $[\CMcal{F}]$.
	\end{definition}
	The study of equivalent skew diagrams was initiated by Billera, Thomas, and van Willigenburg \cite{BTvW} {\emma who completely determined the} equivalence classes of ribbons. A {\em ribbon} is an edgewise connected skew diagram {\emma containing no} $2\times 2$ block of boxes. 
	{\emma In brief}, two ribbons are equivalent if and only if both ribbons admit parallel factorizations that differ {\emma only} by reversing some of the factors (see Theorem 4.1 of \cite{BTvW}). Subsequently, Reiner, Shaw and van Willigenburg \cite{RSvW} introduced an operation between a ribbon and a skew diagram in order to construct larger equivalent non-ribbon skew diagrams from smaller diagrams. This {\emma construction} was {\emma later} extended to {\emma arbitrary} skew diagrams by McNamara and van Willigenburg \cite{MvW}, {\emma who also} established sufficient conditions for {\emma the equivalence of} two skew diagrams. Remarkably, these sufficient conditions are conjectured to be almost necessary, in the sense that only one hypothesis of {\emma the} sufficient conditions is absent in the conjectured necessary conditions. 
	
	An abbreviated form of this conjecture is as follows. Given a skew diagram $\CMcal{F}$, let $\F^*$ be the antipodal rotation of $\F$ obtained by rotating $\F$ by 180 degrees in the plane.   
	\begin{conjecture}(\cite{MvW}, Conjecture 5.7)\label{Conj:1}
		Two skew diagrams $\F$ and $\E$ satisfy $\F\sim \E$ if and only if, for some $r$,
		\begin{align*}
			\F&=(\cdots((\E_1\circ_{W_2}\E_2)\circ_{W_3}\E_3)\cdots)\circ_{W_r}\E_r \quad \mbox{ and }\\
			\E&=(\cdots((\E'_1\circ_{W'_2}\E'_2)\circ_{W'_3}\E'_3)\cdots)\circ_{W'_r}\E'_r,\,\, \mbox{ where }
		\end{align*}
		\begin{itemize}
			\item $\E_1,\E_2,\ldots,\E_r$ are skew diagrams;
			\item for $2\le i\le r$, $\E_i=W_iO_iW_i$ satisfies Hypotheses I--IV;
			\item $\E_1'=\E_1$ or $\E_1'=\E_1^*$;
			\item for $2\le i\le r$, $(\E_i',W_i')=(\E_i,W_i)$ or $(\E_i',W_i')=(\E_i^*,W_i^*)$.
		\end{itemize}
		The equivalence class of $\F$ contains $2^{\kappa}$ elements, where $\kappa$ counts {\emma the} factors $\E_i$ in any irreducible factorization of $\F$ such that $\E_i\ne \E_i^*$.
	\end{conjecture}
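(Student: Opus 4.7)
The plan is to separate the two directions of the biconditional. For the ``if'' direction, I would verify that the iterated construction $(\cdots(\E_1\circ_{W_2}\E_2)\cdots)\circ_{W_r}\E_r$ yields the same skew Schur function regardless of which factors $\E_i$ are replaced by their antipodal rotations $\E_i^*$, provided each $\E_i$ meets Hypotheses I--IV. This should reduce to a local computation on a single $\E_i=W_iO_iW_i$ factor, and then propagate through the composition by combining it with previously established results of Reiner--Shaw--van Willigenburg and McNamara--van Willigenburg about $\circ_W$ respecting equivalence when the hypotheses are met. The counting of $2^{\kappa}$ would then be immediate: each factor with $\E_i\ne \E_i^*$ contributes an independent binary choice, provided the factorization is unique (up to such flips).

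The ``only if'' direction is the main obstacle and is the content of the conjecture proper. My approach would be to attach to each skew diagram $\D$ an \emph{irreducible factorization} along the sub-shapes $W_i$ that is canonical, then to prove two things: (1) any equivalence of skew Schur functions forces the multiset of irreducible factors (up to $*$) to coincide; and (2) two diagrams with matching factor multisets are related by antipodal flips of the individual factors. For (1) I would seek an invariant of $s_\D$ that simultaneously sees the cut points $W_i$ and the inner pieces $O_i$; natural candidates are the coefficients in the expansion of $s_\D$ in ribbon Schur functions, or the values of $\{K_{\D}^\nu\}$ for special shapes $\nu$ that force specific tableau configurations across $W_i$. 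One concrete strategy is to specialise to \emph{extremal} expansions (for instance, the shape giving the lexicographically largest $\nu$ with $K_{\D}^\nu>0$) to read off the outermost overlap $W_r$, and then induct on the number of factors.

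As a warm-up and reality check, I would first establish the conjecture in the restricted setting of the paper, namely when $\D$ is connected and contains exactly one $2\times m$ or $m\times 2$ block. In that case the possible overlaps $W_i$ are severely constrained: away from the single block $\D$ is a ribbon, so the ribbon classification of \cite{BTvW} supplies a baseline equivalence description, and the unique fat block pins down the location where a non-ribbon $\E_i$ may sit. I would then bootstrap the ribbon theorem of Billera--Thomas--van Willigenburg on each side of the block, glue via a single $\circ_W$ application, and verify the count is one, two, or four according to whether the parts on either side of the block are antipodally self-symmetric.

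The hardest step, and where I expect to be genuinely stuck at the level of the full conjecture, is proving uniqueness of the irreducible factorization for arbitrary connected skew diagrams: the composition $\circ_W$ is neither associative nor commutative in any clean sense, so different parenthesisations and different choices of $W_i$ might in principle produce coincidentally equal skew Schur functions not accounted for by the conjectured flips. Overcoming this would likely require a new positivity-style invariant tailored to detect $\circ_W$-factorability intrinsically from $s_\D$. The present paper accordingly restricts to the one-block regime, where this uniqueness can be reduced to the already-understood ribbon case.
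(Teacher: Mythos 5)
The statement you were asked to prove is Conjecture 5.7 of McNamara and van Willigenburg, and the paper does not prove it: the authors explicitly call it elusive and write that it ``seems to be out of reach to entirely settle Conjecture \ref{Conj:1}''. What the paper actually establishes is Theorem \ref{T:1}, the special case where $\D$ and $\E$ are connected and contain exactly one $2\times m$ or $m\times 2$ block. Your proposal is accordingly a research program rather than a proof, which you yourself concede by saying you expect to be ``genuinely stuck'' at the only-if direction for arbitrary skew diagrams; so on the full conjecture there is nothing to compare.

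The genuine gap is in your ``warm-up'' plan for the one-block case, which is not how the paper proceeds and would not work as stated. You propose to run the Billera--Thomas--van Willigenburg ribbon classification on ``each side of the block'' and glue the two pieces via a single application of $\circ_W$. This misses two things. First, the factorization $\D=\CMcal{S}\circ_{m-1}\T$ used in Theorem \ref{T:1} does not split $\D$ into two ribbons flanking the fat block: $\T$ is a single ribbon whose copies tile all of $\D$ and $\CMcal{S}$ is a two-row ribbon encoding the tiling, and moreover a one-block thickened ribbon need not admit any nontrivial factorization at all (the case $(\E_1,\W,\E_2)=(1,\varnothing,\D)$ in Theorem \ref{T:1}), in which case there is no $\circ_W$ to glue through, yet one must still show the equivalence class is exactly $\{\D,\D^*\}$. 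Second, the BTvW argument leans on the fact that for ribbons every coarsening of the underlying composition contributes a nonvanishing term to the $h$-expansion, and the paper's Example \ref{Eg2} shows this fails for thickened ribbons: the Jacobi--Trudi determinant produces cancellations across the $2\times m$ block, so the ribbon-level sign bookkeeping breaks. Handling this is precisely the technical core of the paper, which introduces the function $\bh_\D$, the type sets $A_0,A_1,A_2$, the integer equivalence classes $[i]$, and a split case analysis according to whether $|\alpha|=|\beta|$ or not. A direct bootstrap of the ribbon theorem sees none of this, so even the warm-up is not a proof as proposed.
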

	{\emma Since the full statement} of Hypotheses I--IV {\emma is quite lengthy}, we do not {\emma reproduce them} here and refer the {\emma reader} to \cite{MvW} for more details. 
	
	Recently, Olson-Harris \cite[Theorem 5.3.9]{OH:24} proved the sufficiency of Conjecture \ref{Conj:1} using a Hopf-algebraic approach. However, {\emma establishing its} necessity {\emma appears to be out of reach}; even the proof for ribbons is {\emma highly nontrivial} (see Sections $4$ and $5$ of \cite{BTvW}). We quote here the {\emma comments of} Olson-Harris \cite[Section 5.3.2]{OH:24} {\emma regarding} the difficulty of Conjecture \ref{Conj:1}:
	
	``{\em Showing that such factorizations are necessary seems extremely difficult, but even showing that they are sufficient in all cases has remained open since the publication \cite{MvW}.}''
	
	Related equality questions have also been studied for multiplicity-free skew characters by Gutschwager \cite{G:09} in 2009, providing further evidence that strong restrictions on the Schur expansion often {\emma impose} rigid constraints on the underlying skew diagrams.
	
	The purpose of this paper is to characterize the equivalence classes of (edgewise) connected skew diagrams containing exactly one $2\times m$ or $m\times 2$ block of boxes, as a first step towards {\emma establishing} the necessity of Conjecture \ref{Conj:1}. 
	
	Our main result is stated as follows: 
	\begin{theorem}\label{T:1}
		Two connected skew diagrams $\F$ and $\E$ {\emma each} with exactly one $2\times m$ or $m\times 2$ block for a fixed $m\ge 2$, are equivalent, 
		if and only if 
		\begin{align*}
			\F=\CMcal{S}\circ_{\W}\T\quad \mbox{ and }\quad \E=\CMcal{S}'\circ_{\W}\T'
		\end{align*}
		such that
		\begin{itemize}
			\item if $\F$ admits a nontrivial factorization, say $\F=\CMcal{S}\circ_{\W}\T$, then $\CMcal{S}$ and $\T$ are ribbons and $\W\in \{(m-1),1^{m-1}\}$; \\
			otherwise, $\F$ has only a trivial factorization, and $(\CMcal{S},\W,\T)=(1,\varnothing,\F)$;
			\item for {\emma each} $\CMcal{G}\in \{\CMcal{S},\T\}$, {\emma we have} $\CMcal{G}'=\CMcal{G}$ or $\CMcal{G}'=\CMcal{G}^*$.
		\end{itemize}
		The equivalence class of $\F$ contains $2^{\kappa}$ elements, where $\kappa$ {\emma is} the number of occurrences of $\CMcal{G}\ne \CMcal{G}^*$ for $\CMcal{G}\in \{\CMcal{S},\CMcal{T}\}$, independent of the choice of nontrivial factorization of $\F$.
	\end{theorem}
	Here $\CMcal{S}\circ_\W\T$ denotes the composition operation {\emma obtained by} replacing each box of the ribbon $\CMcal{S}$ with a copy of $\T$, and {\emma joining} adjacent copies along the designated overlap $\W$; see Section~\ref{S:3} for the formal definition.
		
	Theorem \ref{T:1} is consistent with Conjecture \ref{Conj:1}. Recall from  \cite{BTvW} that an equivalence class of ribbons {\emma can} be as large as possible -- its size {\emma can be} any power of two. This shows that, {\emma although} connected skew diagrams with {\emma a} $2\times 2$ block of boxes exhibit features {\emma analogous to} ribbons in the determinantal formulas {\emma for} skew Schur functions (see for instance \cite{BR:10,Jin:18,KimYoo:21}), the {\emma presence of a} single $2\times m$ or $m\times 2$ block of boxes substantially reduces the {\emma size} of equivalence classes. 
	
	We prove Theorem \ref{T:1} by {\emma analyzing} the $h$-expansion of $s_\F$. The use of algebraic structure to study identities among Schur-type functions also appears in the Hopf-algebraic framework developed by Yeats \cite{Y:17}. Our argument is more specialized: rather than deriving identities from Hopf-algebra operations, we exploit the Jacobi–Trudi expansion and the cancellation pattern created by the unique rectangular overlap. Nevertheless, both approaches illustrate how algebraic structure can be used to organize and detect Schur-function identities.
	
	The major difficulty we {\emma must} overcome is the {\em cancellations} induced by the single $2\times m$ block in the Jacobi-Trudi formula {\emma for} skew Schur functions (see (\ref{eq:JT})). {\emma More precisely}, any connected skew diagram $\F$ with exactly one $2\times m$ block is obtained by piling two ribbons with an overlap of $m$ columns, denoted by $\F=\alpha\boxdot\beta$; see Subsection \ref{subsection:2.1} for the formal definition. However, not all coarsened compositions of $\alpha$ and $\beta$ contribute to the expansion of $s_{\F}$ (see Example \ref{Eg2}). {\emma We overcome these} cancellations by classifying positive integers into equivalence classes (see Definition \ref{def:equiv}) and {\emma carefully analyzing} when cancellations {\emma occur}.
	
	
	
	In parallel, {\emma equality of} skew Schur $Q$-functions has also been studied; see for instance \cite{BvW:09,GS:22,Sal:08}.
	
	The rest of the paper is {\emma organized} as follows: In Sections \ref{S:2} and \ref{S:3}, we {\emma recall} some preliminaries on skew Schur functions. In Section \ref{S:31}, {\emma we introduce} an equivalence relation on integer entries and {\emma present several preparatory} lemmas {\emma for the} proof. We then distinguish the cases based on whether {\emma the} two ribbons $\alpha$ and $\beta$ are of equal size for a given connected skew diagram $\F=\alpha\boxdot \beta$, and establish Theorem \ref{T:1} for each case in Sections \ref{S:5} and \ref{S:4}, respectively. {\emma We conclude the paper with final remarks in Section \ref{S:finalremark}.}

	
	
	\section{Background and preliminaries}\label{S:2}
	We assume that the reader {\emma is} familiar with standard concepts of symmetric functions, the {\emma bases} of complete homogeneous symmetric functions and Schur functions, and skew Schur functions. {\emma Further} details can be found in \cite{Stanley}.
	
	A {\em composition} $\alpha$ of $n$ is a {\emma finite} sequence $\alpha_1\alpha_2\cdots\alpha_{\ell}$ of positive integers such that $\alpha_1+\alpha_2+\cdots+\alpha_{\ell}=n$, denoted by $\alpha\vDash n$. The {\em size} of $\alpha$, denoted $|\alpha|$, equals $n$, and the {\em length} of $\alpha$, denoted $\ell(\alpha)$, equals $\ell$.
	Each $\alpha_i$ is called a {\em part} of $\alpha$. The reverse of $\alpha$, denoted by $\alpha^*$, is the composition $\alpha_{\ell}\alpha_{\ell-1}\cdots\alpha_1$. {\emma For any positive integer $n\in \mathbb{N}^+$, we write $[1,n]=\{1,\ldots,n\}$.}
	
	A composition $\alpha=\alpha_1\alpha_2\cdots \alpha_{\ell}\vDash n$ can be naturally transformed into  a set
	\begin{align}\label{E:comtoset}
		\mathrm{SET}(\alpha)=\{\alpha_1,\alpha_1+\alpha_2,\ldots,\alpha_1+\alpha_2+\cdots+\alpha_{\ell-1}\}{\emma \subseteq [1,n-1]} .
	\end{align}
	Conversely, given a subset $S=\{s_1,s_2,\ldots,s_{\ell-1}\}$ of $[1,n-1]$ with $s_1<s_2<\cdots<s_{\ell-1}$, the corresponding composition is
	\begin{align*}
		\mathrm{Comp}(S)=s_1(s_2-s_1)(s_3-s_2)\ldots(n-s_{\ell-1})\vDash n.
	\end{align*} 	
	If a composition $\lambda=\lambda_1\lambda_2\ldots\lambda_{\ell}$ of $n$ {\emma satisfies} $\lambda_i\ge \lambda_{i+1}$ for all $1\le i<\ell$, then $\lambda$ {\emma is called} a {\em partition}, denoted $\lambda\vdash n$. Given a composition $\alpha$, let $\lambda(\alpha)$ denote the partition obtained by rearranging the parts of $\alpha$ in weakly decreasing order. To avoid {\emma ambiguity when consecutive parts are not clearly separated, we sometimes write compositions and partitions with brackets and commas}, i.e., $\alpha=(\alpha_1,\alpha_2,\ldots,\alpha_{\ell})$ and $\lambda=(\lambda_1,\lambda_2,\ldots,\lambda_{\ell})$.
	
	A partition $\lambda$ can be represented as a Ferrers diagram: {\emma place} $\lambda_1$ left-justified boxes in the first row (from the top), $\lambda_2$ boxes in the second row, and so on. A {\em skew diagram} $\lambda/\mu$ is the {\emma set-theoretic} difference of two Ferrers diagrams $\lambda$ and $\mu$ with $\mu\subseteq \lambda$. We denote by $|\lambda/\mu|$ the number of boxes in the skew diagram $\lambda/\mu$. If a box is located in the $i$-th row from the top and the $j$-th column from the left, we say that the box has {\em coordinates} $(i,j)$. 
	
	The Jacobi-Trudi identity \cite{Stanley} {\emma gives} an expansion of {\emma the} skew Schur function $s_{\lambda/\mu}$ in terms of complete homogeneous symmetric functions: $h_{\lambda}=h_{\lambda_1}h_{\lambda_2}\cdots h_{\lambda_{\ell(\lambda)}}$:
	\begin{align}\label{E:JT}
		s_{\lambda/\mu}=\det(h_{\lambda_i-\mu_j-i+j})_{i,j=1}^{\ell(\lambda)}.
	\end{align}
	Relating (\ref{E:JT}) to Definition \ref{Def:equi1} {\emma yields} a second criterion {\emma for identifying} equivalent skew diagrams.
	\begin{definition}\label{Def:equi}
		{\emma For} any skew diagram $\F$ and a partition $\lambda$, let $[h_\lambda]s_{\F}$ denote the coefficient of $h_\lambda$ in the $h$-expansion of $s_{\F}$. Two skew diagrams $\F$ and $\E$ are equivalent if and only if $$[h_\lambda]s_{\F}=[h_\lambda]s_{\E}$$ 
		for all partitions $\lambda$ of $|\F|$.
	\end{definition}
	In the proof of Theorem \ref{T:1}, we use Definition \ref{Def:equi} to {\emma distinguish} non-equivalent skew diagrams by {\emma finding} a partition $\lambda$ {\emma for which} $[h_\lambda]s_{\F}\ne [h_\lambda]s_{\E}$.
	
	A skew diagram is {\em edgewise connected} if, for every pair of boxes $b$ and $b'$, there is a sequence of boxes
	$b=b_0,b_1,\ldots,b_k=b'$ such that \(b_i\) and \(b_{i+1}\) share an edge for every \(1\le i<k\). 
	A {\em ribbon} (also called a strip) is an edgewise connected skew diagram {\emma containing no} $2\times 2$ block of boxes. 
	
	Just {\emma as} partitions {\emma are represented} as Ferrers diagrams, compositions can be visualized as ribbons. For {\emma a composition} $\alpha=\alpha_1\alpha_2\cdots\alpha_{\ell}\vDash n$, the corresponding ribbon {\emma has} exactly $\alpha_i$ boxes in the $i$th row from the bottom. For example, the ribbon in Figure \ref{F:b2} uniquely corresponds to the composition $31231$.
	\begin{figure}[ht]
		\centering
		\includegraphics[scale=0.6]{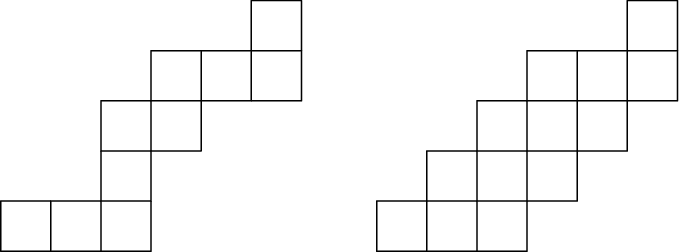}
		\caption{A ribbon $31231$ and a thickened ribbon $3\boxdot 3\boxdot 3 \boxdot 31$\label{F:b2}}
	\end{figure}
	\subsection{Thickened ribbons}\label{subsection:2.1}
	 We now recall the definition of thickened ribbons from \cite{BR:10}. The main focus of this paper is a {\emma particular family} of thickened ribbons, called $m$-regular thickened ribbons, {\emma along with} their conjugates.
	
	\begin{definition}\label{def:regular}
		A $k\times m$ block is the rectangular shape $(m^k)$. 
		A skew diagram is {\emma called} a {thickened ribbon} if it is edgewise connected and contains no $3\times3$ block.
		
		A thickened ribbon is called \emph{$m$-regular} {\emma for $m\ge 2$}, if it contains at least one inclusion-maximal $2\times m$ block {\emma and no $3\times2$ block}. A {conjugate $m$-regular thickened ribbon} is the transpose of an $m$-regular thickened ribbon.
	\end{definition}
	
	\begin{figure}[ht]
		\centering
		\begin{tikzpicture}[scale=.55]
			\foreach \x in {0,...,4} \draw (\x,0) rectangle ++(1,1);
			\foreach \x in {2,...,5} \draw (\x,1) rectangle ++(1,1);
			\node at (3.5,-.7) {$3$-regular};
			\begin{scope}[xshift=9cm]
				\foreach \x in {0,...,3} \draw (\x,0) rectangle ++(1,1);
				\foreach \x in {1,...,3} \draw (\x,1) rectangle ++(1,1);
				\foreach \x in {2,...,4} \draw (\x,2) rectangle ++(1,1);
				\node at (2.8,-.7) {not $3$-regular};
			\end{scope}
		\end{tikzpicture}
		\emma{\caption{The left skew diagram is a $3$-regular thickened ribbon, whereas the right one is not $3$-regular, as it contains a $3\times 2$ block.}}
		\label{F:m3}
	\end{figure}
	
	Given any skew diagram $\F$, let $\F^t$ denote its {\em transpose}, obtained by reflecting $\F$ {\emma across} the diagonal from northwest to southeast (through boxes with coordinates $(i,i)$). {\emma As} we shall see in Lemma \ref{lem:reduction}, the problem of characterizing the equivalence classes of {\emma conjugate $m$-regular thickened ribbons reduces to the corresponding problem for $m$-regular thickened ribbons.}
	
	
	\begin{lemma}\label{lem:reduction}
		Two skew diagrams $\F$ and $\E$ are equivalent, i.e., $\F\sim \E$, if and only if their transposes satisfy $\F^t\sim \CMcal{E}^t$.
	\end{lemma}
	\begin{proof}
		Recall the involution $\omega$ on the algebra of symmetric functions, which satisfies $\omega(s_{\F})=s_{\F^t}$ (see for instance \cite{Stanley}). Then 
		{\emma
		\begin{align*}
			\CMcal{F}\sim \CMcal{E}\iff s_{\F}=s_{\E} \iff \omega(s_{\F})=s_{\F^t}=s_{\E^t}=\omega(s_{\E}) \iff
			\F^t\sim \CMcal{E}^t.
		\end{align*}
	   This completes the proof.
       }
	\end{proof}

	\subsection{Schur function of regular thickened ribbons}
	Through the simple bijection between ribbons and compositions, each $m$-regular thickened ribbon {\emma can be analogously represented} as a \emph{box-dotted composition}: 
	a sequence of positive integers and box-dots such that the left and the right neighbours of every box-dot are integers of values at least $m$. 
	
	For an $m$-regular thickened ribbon $\F$, the sizes of parts of the corresponding composition are the numbers of boxes in each row, {\emma with} a box-dot {\emma placed between two adjacent parts} if the corresponding rows of $\F$ form a $2\times m$ block of boxes. For example, the $2$-regular thickened ribbon in Figure \ref{F:b2} is written as $\F=3\boxdot 3\boxdot 3\boxdot31$.
	
	For an $m$-regular thickened ribbon $\F=\F_1\boxdot\F_2\boxdot\cdots\boxdot\F_{l}$ where {\emma each} $\F_i$ is a ribbon, let $\alpha(\F)$ denote the underlying composition obtained by removing all box-dots from $\F$, and let $|\F|$ denote the number of boxes of $\F$.
	The {\em antipodal rotation} of $\F$, 
	denoted $\F^*$, is the thickened ribbon $\F_{l}^*\boxdot \F_{l-1}^*\boxdot\cdots\boxdot \F_{1}^*$.
	It is {\emma immediate} that $\alpha(\F^*)=\alpha(\F)^*$. 
	Furthermore, we define $\lambda(\F)=\lambda(\alpha(\F))$ and $\ell(\F)=\ell(\alpha(\F))$.
	
	Let $(\F;k)$ denote a pair consisting of an $m$-regular thickened ribbon $\F$ and a nonnegative integer $k$, the \emph{size} of $(\F;k)$ is defined as $|(\F;k)|=|\alpha(\F)|+k(m-1)$. For convenience, if $k=0$, we simply write $(\F;0)=(\F)$.
	
	Let $\CMcal{P}_n$ be the set of pairs $(\F;k)$ of size $n$. We introduce a partial order $\preceq$ on $\CMcal{P}_n$ as follows: For $(\CMcal{S};k), (\CMcal{T};p)\in\CMcal{P}_n$, let $\alpha(\CMcal{S})=\alpha_1\alpha_2\cdots\alpha_\ell$. We say that $(\CMcal{T};p)$ {\em covers} $(\CMcal{S};k)$ if one of the following {\emma holds}:
	\begin{enumerate}
		\item $p=k$, for some $i$, {\emma there is} no box-dot between $\alpha_i$ and $\alpha_{i+1}$ in $\CMcal{S}$, and $\CMcal{T}$ is obtained from $\CMcal{S}$ by merging $\alpha_i\alpha_{i+1}$ into $\alpha_i+\alpha_{i+1}$;
		\item $p=k+1$, for some $i$, a box-dot appears between $\alpha_i$ and $\alpha_{i+1}$ in $\CMcal{S}$, and $\CMcal{T}$ is obtained from $\CMcal{S}$ by replacing $\alpha_i\boxdot\alpha_{i+1}$ {\emma with} $\alpha_i+\alpha_{i+1}-m+1$.
	\end{enumerate}
	Further, we say that $(\CMcal{T};p)$ \emph{coarsens} $(\CMcal{S};k)$ if $(\CMcal{S};k)\preceq(\CMcal{T};p)$.
	
	For example, when $m=2$, $(5\boxdot 3\boxdot 31;1)$ covers $(3\boxdot 3\boxdot 3\boxdot31)$, $(3\boxdot 5\boxdot 31;1)$ covers $(3\boxdot 3\boxdot 3\boxdot31)$ and $(3\boxdot 3\boxdot 3\boxdot 4)$ covers $(3\boxdot 3\boxdot 3\boxdot31)$.
	
    {\emma 
	\begin{remark}
		Note that the merge operation defined by (1) and (2) preserves the size of $(\CMcal{S};k)$. 
		
		If $p=k$, then (1) implies that $\alpha(\CMcal{T})$ is obtained by merging two adjacent parts of $\alpha(\CMcal{S})$. It follows that $|\alpha(\CMcal{T})|=|\alpha(\CMcal{S})|$ and hence $|(\CMcal{T};k)|=|(\CMcal{S};k)|$. 
		
		If $p=k+1$, then (2) gives $|\alpha(\CMcal{T})|=|\alpha(\CMcal{S})|-m+1$. Consequently $|(\CMcal{T};k+1)|=|\alpha(\CMcal{T})|+(k+1)(m-1)=|\alpha(\CMcal{S})|+k(m-1)=|(\CMcal{S};k)|$.
	\end{remark}}

	
	We will apply the Jacobi-Trudi identity (\ref{E:JT}) to express skew Schur functions {\emma of thickened ribbons} in terms of complete symmetric functions.
	
	\begin{proposition}\label{prop:JT}
		The skew Schur function of an $m$-regular thickened ribbon 
		$\F$ is given by
		\begin{align}\label{E:tform}
			s_{\F}=(-1)^{\ell(\F)}\sum_{(\CMcal{S};k)\succeq(\F)}
			(-1)^{\ell(\CMcal{S})}\,
			h_{m-1}^{k}\,h_{\lambda(\CMcal{S})}.
		\end{align}
	\end{proposition}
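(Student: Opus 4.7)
The plan is to apply the Jacobi-Trudi identity (\ref{E:JT}) to $s_{\CMcal{D}}$ and expand the resulting determinant after observing that the underlying matrix is lower Hessenberg. Index the rows of $\CMcal{D}$ from top to bottom as $1,\ldots,n=\ell(\CMcal{D})$, writing $a_1,\ldots,a_n$ for the row sizes and $o_1,\ldots,o_{n-1}\in\{1,m\}$ for the overlaps between adjacent rows (with $o_i=m$ exactly at the box-dots). A direct geometric calculation from the skew shape of $\CMcal{D}$ yields $M_{i,i}=h_{a_i}$; $M_{i,j}=h_{\sum_{k=i}^{j}a_k-\sum_{k=i}^{j-1}(o_k-1)}$ for $j>i$; $M_{i+1,i}=h_{o_i-1}$, which equals $1$ when $o_i=1$ and $h_{m-1}$ when $o_i=m$; and $M_{i,j}=h_{-\sum_{k=j+1}^{i-1}a_k+\sum_{k=j}^{i-1}(o_k-1)}$ for $j<i-1$.

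The crucial step is the Hessenberg lemma: for $j<i-1$, the subscript of $h$ in the last formula is at most $-1$, so $M_{i,j}=0$. Here the $m$-regular hypothesis is essential, since it forces (a) $a_k\ge m$ whenever $o_{k-1}=m$ or $o_k=m$, and (b) $a_k\ge 2m-1$ whenever $o_{k-1}=o_k=m$ (otherwise rows $k-1,k,k+1$ would share at least two columns, creating a forbidden $3\times 2$ block). Classifying each $l\in\{j+1,\ldots,i-1\}$ by how many of $o_{l-1},o_l$ equal $m$, and using the corresponding lower bounds $a_l\ge 1,\ m,$ or $2m-1$, gives
\[
\sum_{k=j+1}^{i-1}a_k\;\ge\;(i-j-1)+(m-1)\bigl(2p-p_j-p_{i-1}\bigr),
\]
where $p=|\{k\in\{j,\ldots,i-1\}:o_k=m\}|$ and $p_j,p_{i-1}\in\{0,1\}$ indicate whether $j$ or $i-1$ itself is a box-dot index. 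Combining with $\sum_{k=j}^{i-1}(o_k-1)=(m-1)p$ and $p\ge p_j+p_{i-1}$ produces $-\sum_{k=j+1}^{i-1}a_k+\sum_{k=j}^{i-1}(o_k-1)\le-(i-j-1)\le-1$, which is exactly $M_{i,j}=0$.

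Once $M$ is lower Hessenberg, the expansion of $\det M$ is standard. The permutations $\sigma$ with nonzero contribution to the Leibniz sum are precisely those with $\sigma(i)\ge i-1$ for all $i$, and these biject with partitions of $\{1,\ldots,n\}$ into consecutive blocks $B_1,\ldots,B_r$: each block $B=[i_0,j_0]$ corresponds to the shift cycle $\sigma(i_0)=j_0$, $\sigma(i_0+t)=i_0+t-1$ for $1\le t\le j_0-i_0$, carrying sign $(-1)^{|B|-1}$, and its product of matrix entries factors as
\[
M_{i_0,j_0}\prod_{k=i_0}^{j_0-1}M_{k+1,k}\;=\;h_{m-1}^{p_B}\cdot h_{\sum_{k\in B}a_k-p_B(m-1)},
\]
where $p_B$ counts box-dots among the internal adjacencies of $B$. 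Such a block partition matches precisely a coarsening $(\CMcal{S};k')\succeq(\CMcal{D})$—the block values become the parts of $\CMcal{S}$ and $k'=\sum_B p_B$—so that $\prod_B(-1)^{|B|-1}=(-1)^{n-r}=(-1)^{\ell(\CMcal{D})}(-1)^{\ell(\CMcal{S})}$ matches the sign in (\ref{E:tform}), and summing over all coarsenings reproduces the claimed formula. The only substantive obstacle throughout is the Hessenberg lemma of the second paragraph; the remainder is a bookkeeping of signs and cycles.
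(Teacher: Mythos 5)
Your proof is correct, and it takes a genuinely different route from the paper's. The paper proves \eqref{E:tform} by induction on $\ell(\CMcal{D})$, using a two-term Laplace expansion of the Jacobi--Trudi determinant along the last row (giving $s_{\CMcal{D}}=s_{\CMcal{D}_1}h_{\alpha_k}-s_{\CMcal{D}_2}$ in the ribbon case and $s_{\CMcal{D}}=s_{\CMcal{D}_1}h_{\alpha_k}-h_{m-1}s_{\CMcal{D}_2}$ in the box-dot case); the reason only two cofactors survive is never isolated. You instead isolate precisely that reason as a lemma --- that the Jacobi--Trudi matrix of an $m$-regular thickened ribbon is lower Hessenberg --- and then bypass induction entirely by expanding the Hessenberg determinant directly as a signed sum over compositions of $\{1,\dots,n\}$ into consecutive blocks, matching these bijectively to the coarsenings $(\CMcal{S};k)\succeq(\CMcal{D})$. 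Your Hessenberg inequality is the right one and your case analysis on $a_l$ (with the $a_l\ge m$ and $a_l\ge 2m-1$ bounds forced by the $m$-regular hypothesis) is exactly what makes the vanishing work; both your constant count and the sign $(-1)^{n-r}=(-1)^{\ell(\CMcal{D})+\ell(\CMcal{S})}$ are correct. What your approach buys is a self-contained, one-pass proof that exposes \emph{why} the recursion closes, together with an explicit bijection between determinant terms and poset elements; the paper's inductive version is shorter but leaves the two-term Laplace expansion (i.e., the Hessenberg fact) unproved. The one place you could tighten the writing is a sentence noting that the map from block partitions to coarsenings is injective (the block sum $\sum_{k\le b}a_k-(m-1)p_{[1,b]}$ is strictly increasing in $b$ because $a_{b+1}\ge m>m-1$ whenever absorbing $o_b$ subtracts $m-1$), so each $(\CMcal{S};k)\succeq(\CMcal{D})$ is hit exactly once.
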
 
	\begin{proof}
		Fix an $m$-regular thickened ribbon $\F$, suppose that  $\alpha(\F)=\alpha_1\alpha_2\cdots\alpha_k$. If there is no box-dot between the last two parts $\alpha_{k-1}$ and $\alpha_k$, then we have
		\begin{align*}
			s_{\F}=s_{\F_1}h_{\alpha_k}-s_{\F_2}
		\end{align*}
		where $\F_1$ is obtained from $\F$ by removing the trailing part $\alpha_k$, and $\F_2$ is obtained from $\F$ by merging $\alpha_{k-1}\alpha_k$ into $\alpha_{k-1}+\alpha_k$.
		
		Otherwise, $\F$ ends with $\alpha_{k-1}\boxdot\alpha_k$, and we have
		\begin{align*}
			s_{\F}=s_{\F_1}h_{\alpha_k}-h_{m-1}s_{\F_2}
		\end{align*}
		where $\F_1$ is obtained from $\F$ by removing the trailing $\boxdot\alpha_k$, and $\F_2$ is obtained from $\F$ by replacing $\alpha_{k-1}\boxdot\alpha_k$ with $\alpha_{k-1}+\alpha_k-m+1$.
		
		The formula (\ref{E:tform}) then follows by induction on $\ell(\F)$.
	\end{proof}
	\begin{example}
		For {\emma the} $2$-regular thickened ribbon $\F=3\boxdot 3\boxdot 31$, we have 
		\begin{align*}
			s_{\F}=h_{3}h_{3}h_3h_1-2h_5h_3h_1^2-h_3h_3h_4+h_7h_1^3-h_8h_1^2+h_5h_4h_1+h_3h_6h_1.
		\end{align*}
		The terms correspond {\emma respectively} to the pairs $(3\boxdot3\boxdot31),(5\boxdot 31;1),(3\boxdot 51;1),(3\boxdot3\boxdot 4),(71;2)$, $(8;
		2),(5\boxdot 4;1)$ and $(3\boxdot 6;1)$, all of which coarsen $(3\boxdot3\boxdot 31)$.
	\end{example}
	\begin{example}\label{Eg2}
		{\emma Consider the} two $2$-regular thickened ribbons $\F=122\boxdot 3$ and $\E=12\boxdot 32$ (see Figure \ref{F:e2}). By Proposition \ref{prop:JT}, we have
		\begin{align*}
			s_{\F}=s_{\E}=h_3h_2^2h_1-h_3^2h_2-h_4h_2h_1^2+h_5h_3+h_6h_1^2-h_7h_1,
		\end{align*}
	    thus $\F\sim \E$.
		\begin{figure}[ht]
			\centering
			\includegraphics[scale=1.0]{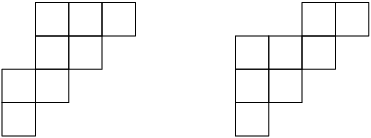}
			\caption{Two equivalent thickened ribbons $\F=122\boxdot 3$ (left) and $\E=12\boxdot 32$ (right). \label{F:e2}}
		\end{figure}
		An important point here is that although both $(14\boxdot 3),(34;1)\succeq (\F)$, they appear with opposite signs in (\ref{E:tform}) and therefore cancel each other. The same phenomenon occurs for $(12\boxdot 5),(52;1)$ with respect to $(\E)$.
	\end{example}
	Example \ref{Eg2} illustrates a {\emma key} difference between ribbons and thickened ribbons: every coarsening of a composition contributes a non-vanishing term to the $h$-expansion of a ribbon Schur function (see  \cite[Proposition 2.1]{BTvW}), but this is no longer true for thickened ribbons. 
	
	An immediate consequence of Proposition \ref{prop:JT} is {\emma the following} necessary condition for two equivalent thickened ribbons. 
	\begin{corollary}\label{lem:basic}
		If $\F=\F_1\boxdot\F_2\boxdot\cdots\boxdot\F_p$ and $\E=\E_1\boxdot\E_2\boxdot\cdots\boxdot\E_k$ are equivalent $m$-regular thickened ribbons where each $\F_i$ and $\E_i$ is a ribbon, then 
		$\lambda(\F)=\lambda(\E)$. 
	\end{corollary}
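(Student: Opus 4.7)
The plan is to identify $\lambda(\CMcal{D})$ as the unique dominance-minimum partition appearing with nonzero coefficient in the $h$-expansion of $s_{\CMcal{D}}$ produced by Proposition~\ref{prop:JT}; the corollary then follows at once, since $s_{\CMcal{D}}=s_{\CMcal{E}}$ forces the two $h$-supports, and hence their dominance-minima, to coincide.

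First I would rewrite (\ref{E:tform}) as
\[
s_{\CMcal{D}}=\sum_{(\CMcal{S};k)\succeq(\CMcal{D})}(-1)^{\ell(\CMcal{D})+\ell(\CMcal{S})}\,h_{\mu(\CMcal{S},k)},\qquad \mu(\CMcal{S},k):=\lambda(\CMcal{S})\cup((m-1)^k),
\]
where the union is taken as multisets and then resorted into a partition. The distinguished term $(\CMcal{S};k)=(\CMcal{D};0)$ contributes exactly $+h_{\lambda(\CMcal{D})}$.

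The core of the argument is a dominance claim: whenever $(\CMcal{T};p)$ covers $(\CMcal{S};k)$ in the order $\preceq$, the partition $\mu(\CMcal{T},p)$ strictly dominates $\mu(\CMcal{S},k)$. For a type~(1) cover this is the standard observation that merging two positive parts $\alpha_i,\alpha_{i+1}$ of a partition into $\alpha_i+\alpha_{i+1}$ strictly increases the dominance order. For a type~(2) cover the underlying multiset transformation is $\{\alpha_i,\alpha_{i+1}\}\mapsto\{\alpha_i+\alpha_{i+1}-m+1,\,m-1\}$; the box-dot forces $\alpha_i,\alpha_{i+1}\ge m$, so $\min(\alpha_i,\alpha_{i+1})-(m-1)\ge 1$, and the swap amounts to moving a positive number of boxes from the smaller part to the larger one---again a strict dominance increase. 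Transitivity then yields $\mu(\CMcal{S},k)>\lambda(\CMcal{D})$ for every $(\CMcal{S};k)\succ(\CMcal{D};0)$.

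Consequently, no other term in the expansion carries the index $\lambda(\CMcal{D})$, so $[h_{\lambda(\CMcal{D})}]s_{\CMcal{D}}=+1$ survives uncancelled, and every partition $\mu$ with $[h_{\mu}]s_{\CMcal{D}}\neq 0$ satisfies $\mu\ge\lambda(\CMcal{D})$ in dominance order. The same conclusion for $\CMcal{E}$ combined with $s_{\CMcal{D}}=s_{\CMcal{E}}$ forces $\lambda(\CMcal{D})\ge\lambda(\CMcal{E})$ and $\lambda(\CMcal{E})\ge\lambda(\CMcal{D})$, so $\lambda(\CMcal{D})=\lambda(\CMcal{E})$. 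The main obstacle will be the dominance verification for type~(2) covers: after the local swap one needs all partial sums of the resorted combined partition to weakly increase, with strict increase somewhere. The box-dot constraint $\alpha_i,\alpha_{i+1}\ge m$ is precisely what ensures the number of boxes moved is positive; without $m$-regularity this number could be zero and the argument would collapse.
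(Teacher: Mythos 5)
Your dominance-minimum argument is correct. Each type~(2) cover replaces $\{\alpha_i,\alpha_{i+1}\}$ (both $\ge m$ because they flank a box-dot) by $\{\alpha_i+\alpha_{i+1}-m+1,\,m-1\}$, which moves $\min(\alpha_i,\alpha_{i+1})-(m-1)\ge 1$ boxes from the smaller to the larger part; together with the type~(1) merges this gives a strict dominance increase at every cover, so by transitivity $\lambda(\alpha(\D))$ is the unique dominance-minimum in the $h$-support, occurs with coefficient $+1$, and $\lambda(\alpha(\D))=\lambda(\alpha(\E))$ follows. The paper's own proof, however, isolates a \emph{different} statistic from the same expansion: it restricts to partitions having no part of size $m-1$ (which automatically discards every $(\gamma;j)$ with $j\ge 1$) and takes the unique shortest such partition, namely $\lambda(|\D_1|,\dots,|\D_p|)$ coming from the maximal $(\CMcal{S};0)\succeq(\D)$. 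You recover the sorted row lengths $\alpha(\D)$; the paper recovers the sorted ribbon-block sizes $(|\D_1|,\dots,|\D_p|)$. Both are genuine invariants and both proofs are self-contained, and the defined convention $\lambda(\D):=\lambda(\alpha(\D))$ from Section~\ref{S:2} actually matches your reading. But the sentence the paper draws immediately after the corollary --- that equivalent $m$-regular thickened ribbons contain the same number of $2\times m$ blocks, i.e.\ $p=k$ --- is read off from the block-size version via $\ell\bigl(\lambda(|\D_1|,\dots,|\D_p|)\bigr)=p$, and it is not a direct consequence of $\lambda(\alpha(\D))=\lambda(\alpha(\E))$. To support that intended use with a dominance argument you would want to carry out your analysis within the sub-family of partitions with no part equal to $m-1$, or simply adopt the length-counting step as the paper does.
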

	This corollary is covered by a more general result in \cite[Proposition 8.4 and Corollary 8.11]{RSvW}, {\emma which} states that two {\emma equivalent} skew diagrams must contain the same number of $m\times \ell$ rectangles as subdiagrams for any $m,\ell\in\mathbb{N}^+$. We include a short proof here for completeness.
	\begin{proof}
		By definition, $|\F_i|\geq m$ for all $i$. Among all $(\CMcal{S};0)\succeq (\F)$, the pair $(|\F_1|\boxdot|\F_2|\boxdot\cdots\boxdot|\F_p|;0)$ is the unique maximal element. In other words, among all partitions $\lambda$ with no part of size $(m-1)$ such that $[h_{\lambda}]s_{\F}\ne 0$ in (\ref{E:tform}), the unique shortest partition is $\lambda(\F)$, of length $p$. The same statement holds for $\E$ with $p$ replaced by $k$. Since $\F\sim \E$, we have $[h_{\lambda}]s_{\F}=[h_{\lambda}]s_{\E}$ for all partitions $\lambda$, and therefore $\lambda(\F)=\lambda(\E)$, as desired.
	\end{proof}
	Consequently, two {\emma equivalent} $m$-regular thickened ribbons must contain the same number of $2\times m$ blocks. Hence it suffices to study equivalence classes of thickened ribbons with a fixed number of {\emma such} blocks. 
	
	\section{A composition of thickened ribbons}\label{S:3}
	The purpose of this section is to introduce a composition of thickened ribbons, {\emma following \cite{BTvW,MvW}. We will describe the operation $\circ_{\W}$ specifically} for $m$-regular thickened ribbons, 
	and refer the readers to \cite[Section 3]{MvW} for its general definition for arbitrary skew diagrams.
	
	The composition {\emma $\CMcal{S}\circ_\W\T$} is best viewed as {\emma a substitution}: each box of {\emma the ribbon $\CMcal{S}$} is replaced by a copy of {\emma the ribbon $\T$}, and neighbouring copies {\emma with an overlapping $\W$} are joined according to whether the corresponding boxes in $\CMcal{S}$ are horizontally or vertically adjacent. 
	
	
	In what follows, unless otherwise specified, we assume that $\F=\alpha\boxdot\beta$ is an $m$-regular thickened ribbon with exactly one $2\times m$ block. For any $m\ge 2$, let $\W\in\{\varnothing, m-1\}$, i.e., $\W$ is empty, or a row of $m-1$ boxes. If $\W=\varnothing$, we omit the subscript $\W$ from $\circ_{\W}$ and call $\F=1\circ\F=\F\circ 1$ a {\em trivial} factorization of $\F$.
	
	Otherwise, $\W=m-1$, we define a {\em nontrivial} factorization of $\F$, written $\F=\CMcal{S}\circ_{\W} \T$, where $\CMcal{S}\ne 1$ and $\T\ne 1$ are ribbons.
	
	Given $\F$ and $\W$, we say that $\W$ is located at the bottom (resp. top) of $\F$ if $\W$ is a connected subdiagram of $\F$ containing the southwesternmost (resp. northeasternmost) box of $\F$.
	
	Let $\O$ denote the subdiagram of $\F$ obtained by removing copies of $\W$ at the bottom and top. If such an $\O$ does not exist, or if $\O$ is not a skew diagram, then $\F$ admits only a trivial factorization; otherwise,  $\F$ may have a nontrivial decomposition.
	\begin{example}
		For $m=2$ and $\W=(1)$, we have the following decompositions, respectively, of $31412\boxdot312$, $121412\boxdot 312$, $31412\boxdot3111$ and $121412\boxdot 3111$, as shown from left to right in Figure \ref{F:3}.
		\begin{figure}[ht]
			\centering
			\includegraphics[scale=0.4]{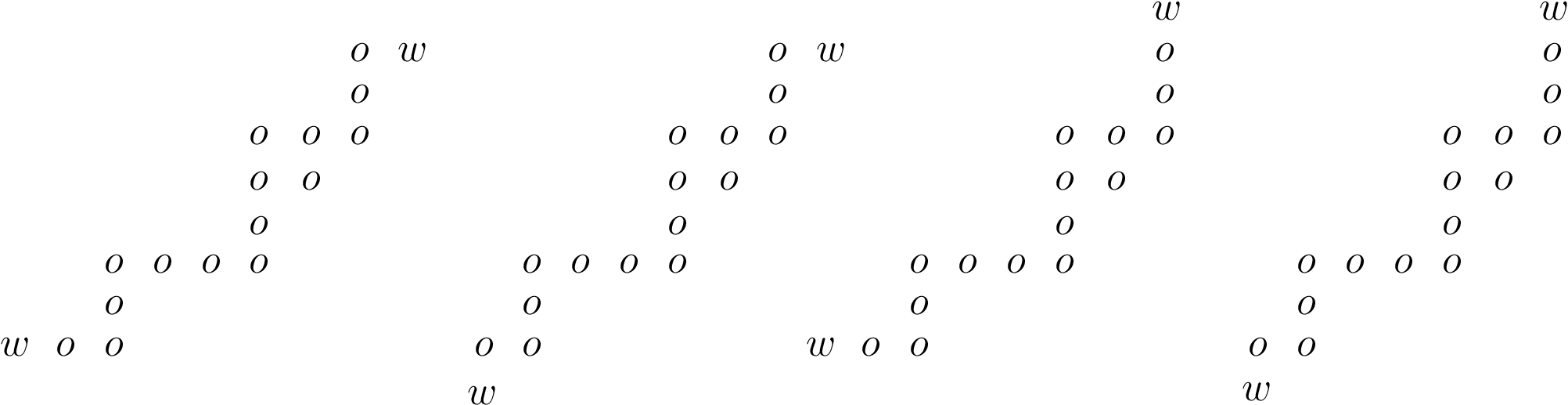}
			\caption{{\emma The four types of $2$-regular thickened ribbons with $\W=(1)$ are} $\W\rightarrow\O\rightarrow\W$, $\W\uparrow\O\rightarrow \W$, 
				$\W\rightarrow\O\uparrow \W$ and $\W\uparrow\O\uparrow \W$, where the symbols $w$ and $o$ denote boxes belonging to $\W$ and $\O$ respectively.\label{F:3}}
		\end{figure}
	\end{example}	
	If the bottommost (resp. topmost) copy of $\W$ lies to the left (resp. right) of the southwesternmost (resp. northeasternmost) box of $\O$, then we say {\emma that} the lower (resp. upper) copy of $\W$ is {\em horizontally attached to} $\O$, denoted by $\W\rightarrow \O$ (resp. $\O \rightarrow \W$). Similarly, if the bottommost (resp. topmost) copy $\W$ of $\F$ is {\emma directly} below (resp. right above) the southwesternmost (resp. northeasternmost) box of $\O$, then we say the lower (resp. upper) copy of $\W$ is {\em vertically attached to} $\O$, denoted by $\W\uparrow \O$ (resp. $\O \uparrow \W$). {\emma Thus}, there are four {\em types} for thickened ribbons (and in particular for ribbons): 
	\begin{align*}
	\W\rightarrow \O\rightarrow \W,\quad \W\uparrow \O\rightarrow \W,\quad \W\rightarrow \O\uparrow \W, \quad\mbox{ and }\quad\W\uparrow \O\uparrow \W; 
	\end{align*}
	see Figure \ref{F:3} for examples.
	
	Given two ribbons $\CMcal{S}$ and $\T$, we define the composition $\CMcal{S} {\circ}_\W\T$ with $\W=m-1$ as follows. Each box $d$ of $\CMcal{S}$ will contribute a copy of $\T$, denoted by $\T_d$, in the plane. We {\emma arrange the} copies $\T_d$ according to the positions of $d$ in $\CMcal{S}$.
	
	For two neighbouring boxes $d,d'$ of $\CMcal{S}$, if $d$ is one position west of $d'$, then we combine $\T_d$ and $\T_{d'}$ by identifying the upper copy of $\W$ in $\T_d$ with the lower copy of $\W$ in $\T_{d'}$, denoted $\T_{d}\amalg \T_{d'}$.
	If $d$ is one position south of $d'$, {\emma the construction differs as follows}:
	\begin{itemize}
		\item If $\T=\W\rightarrow\O\rightarrow\W$ (resp. $\T=\W\uparrow\O\uparrow\W$), then we position $\T_{d'}$ so that the lower copy of $\W$ in $\T_{d'}$ is one position northwest (resp. southeast) of the upper copy of $\W$ in $\T_d$.
		\item If $\T=\W\rightarrow\O\uparrow\W$ (resp. $\T=\W\uparrow\O\rightarrow\W$), then form $\T_{d}\amalg \T_{d'}$ and make an extra copy of $\W$ one position southeast (resp. northwest) from the intersection $\T_d\cap \T_{d'}$.	
	\end{itemize}
	\begin{example}
		In Figure \ref{F:2} we have $\W=1$, $\CMcal{S}=21$ and $\T=312$ of type $\W\rightarrow\O\rightarrow\W$. The resulting composition $\CMcal{S}\circ_{1}\T$ {\emma combines} the two ribbons $31412$ and $312$ in such a way that a $2\times 2$ block is created in between.
	\end{example}
	\begin{figure}[ht]
		\centering
		\includegraphics[scale=0.45]{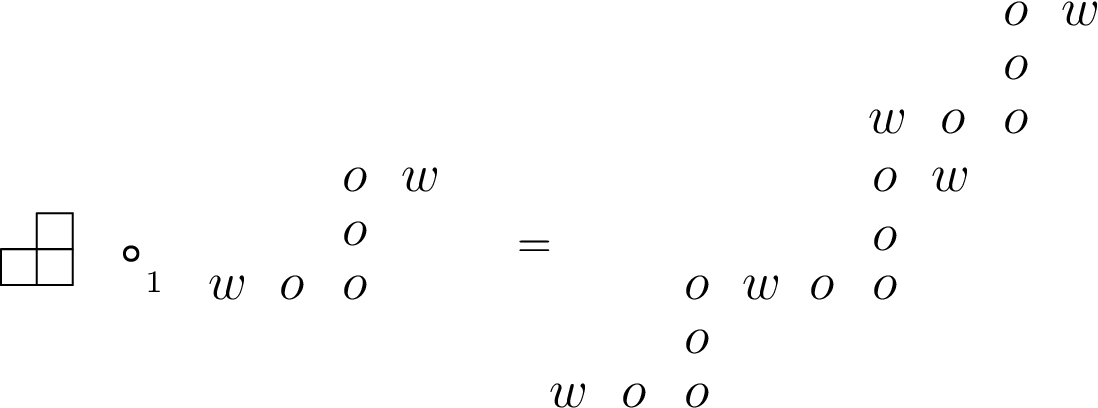}
		\caption{A composition of two ribbons: $21\circ_{1}312=31412\boxdot 312$ where $w$ and $o$ denote boxes of $\W$ and $\O$ respectively.\label{F:2}}
	\end{figure}
	\begin{remark}
		A given thickened ribbon {\emma may admit more than one} nontrivial factorization. For instance, when $m=2$, $\F=31412\boxdot 31412=11\circ_1 31412=22\circ_1 312$.
	\end{remark}
	The types of $\T$ and $\F$ are the same whenever $\F=\CMcal{S}\circ_\W\T$. Furthermore, the number of rows of $\CMcal{S}$ is one more than the number of $2\times m$ blocks in the $m$-regular thickened ribbon $\F$. In particular, $\CMcal{S}$ has exactly two rows if $\F$ has only one $2\times m$ block.
	
	The sufficient condition for {\emma equivalence of two} thickened ribbons was proved in \cite{MvW}, and also appears as a special case of \cite[Theorem 5.3.9]{OH:24}.
	
	\begin{theorem}\label{thm:MvW}(A special case of \cite[Theorem 3.31]{MvW} and \cite[Theorem 5.3.9]{OH:24})
		Let $\F=\alpha\boxdot\beta$ be an $m$-regular thickened ribbon with exactly one $2\times m$ block. If $\F$ has a nontrivial factorization $\F=\CMcal{S}\circ_{\W}\T$ with $\CMcal{S}\ne 1$, $\T\neq 1$ and $\W=m-1$, then
		\begin{align*}
			\F\sim \CMcal{S}^*\circ_\W \T\sim \CMcal{S}\circ_\W\T^*\sim\CMcal{S}^*\circ_\W\T^*.
		\end{align*}
		Similarly $\F^t=\CMcal{S}^*\circ_{\W^t}\T^t$, as a conjugate $m$-regular thickened ribbon with exactly one $m\times 2$ block, satisfies (with $\W^t=1^{m-1}$)
		\begin{align*}
			\F^t\sim \CMcal{S}\circ_{\W^t} \T^t\sim \CMcal{S}\circ_{\W^t}(\T^t)^*\sim\CMcal{S}^*\circ_{\W^t}(\T^t)^*.
		\end{align*}
	\end{theorem}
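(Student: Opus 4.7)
The plan is to derive this theorem as a direct special case of \cite{MvW}, Theorem 3.31, which provides sufficient conditions for skew-shape equivalence built via the composition $\circ_\W$ subject to Hypotheses I--IV on the inner factor. In our situation $\W=m-1$ is a single row, $\CMcal{S}$ is a two-row ribbon (forced by the assumption that $\D$ has exactly one $2\times m$ block, since the number of rows of $\CMcal{S}$ is one more than the number of $2\times m$ blocks in $\D$), and $\T=\W\cup\O\cup\W$ is a ribbon with a copy of $\W$ attached at the bottom and at the top.

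First I would verify Hypotheses I--IV of \cite{MvW} in this restricted configuration. Since $\W$ is the simplest possible overlap shape and $\T$ is a ribbon falling into exactly one of the four local types $\W\to\O\to\W$, $\W\uparrow\O\to\W$, $\W\to\O\uparrow\W$, $\W\uparrow\O\uparrow\W$ listed just above, the verification reduces to a finite inspection of the corners of $\T$ adjacent to the $\W$-attachments. With the hypotheses in hand, \cite{MvW}, Theorem 3.31 immediately yields the first chain $\D\sim\CMcal{S}^*\circ_\W\T\sim\CMcal{S}\circ_\W\T^*\sim\CMcal{S}^*\circ_\W\T^*$, corresponding to independently reversing the outer factor $\CMcal{S}$ and the inner factor $\T$.

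For the conjugate statement I would apply the algebra involution $\omega$, which sends $s_\D$ to $s_{\D^t}$ and therefore preserves equivalence. The geometric content is the identity $(\CMcal{S}\circ_\W\T)^t=\CMcal{S}^*\circ_{\W^t}\T^t$, which one reads off directly from the construction of $\circ_\W$ in Section \ref{S:3}: transposition along the NW--SE diagonal converts $\T$ into $\T^t$, turns the row $\W$ into the column $\W^t$, and reverses the stacking order of the copies of $\T$ indexed by the two-row ribbon $\CMcal{S}$, which is precisely $\CMcal{S}^*$. Applying this identity to each of the four thickened ribbons in the first chain produces the required second chain. The main obstacle I expect is the case analysis in Step 1, since Hypotheses I--IV in \cite{MvW} occupy several pages; fortunately, in our setting they simplify drastically because $\W$ is just a single row and $\T$ is a ribbon, so the required structural symmetries can be checked locally at a small number of corner configurations.
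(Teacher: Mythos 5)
Your proposal takes the same route as the paper, which in fact offers no proof at all but simply presents the statement as a direct specialization of Theorem 3.31 of \cite{MvW}. Your additional commentary on verifying Hypotheses I--IV in the restricted setting and on deriving the conjugate chain via $\omega$ and the transpose identity $(\CMcal{S}\circ_\W\T)^t=\CMcal{S}^*\circ_{\W^t}\T^t$ is consistent with what the paper implicitly relies on and is a reasonable way to fill in the unwritten details.
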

	
	\begin{example}
		The $2$-regular thickened ribbon $\F=31412\boxdot 312$ satisfies
		$$\F= 21 \circ_1 312\sim 12\circ_1 312\sim 21\circ_1 213\sim 12\circ_1 213,$$
		where $12\circ_1 312=312\boxdot 31412$, $ 21\circ_1 213=21413\boxdot 213$ and $12\circ_1 213=213\boxdot 21413$; see Figure \ref{F:4}.
		\begin{figure}[ht]
			\centering
			\includegraphics[scale=0.8]{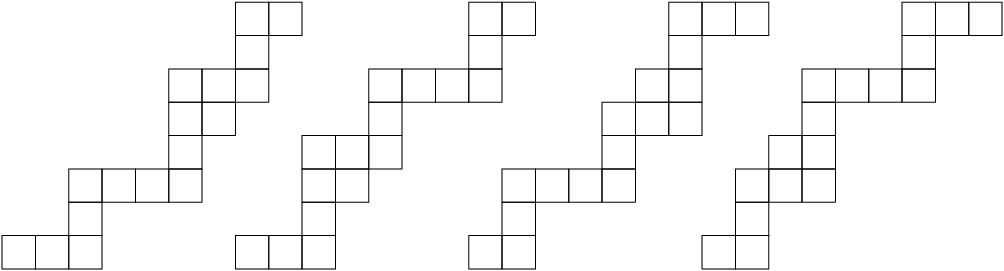}
			\caption{Four equivalent thickened ribbons: $31412\boxdot 312$, $312\boxdot 31412$, $21413\boxdot 213$ and $213\boxdot 21413$ from left to right.\label{F:4}}
		\end{figure}
	\end{example}
	
	
	\section{A classification of positive integers}\label{S:31}
	
	Throughout Sections~\ref{S:31}--\ref{S:4}, let $\F=\alpha\boxdot\beta$ be an $m$-regular thickened ribbon with exactly one $2\times m$ block. Set
	\begin{align}\label{E:nr}
	n=|\alpha|+|\beta|\quad \mbox{ and } \quad r=\gcd(|\alpha|-m+1,|\beta|-m+1).
	\end{align}
	For such a diagram, formula ({\ref{E:JT}) simplifies to
	\begin{align}\label{eq:JT}
		s_{\F}=\sum_{(\CMcal{S})\succeq(\F)}
		(-1)^{\ell(\F)+\ell(\CMcal{S})}h_{\lambda(\CMcal{S})}-\sum_{(\gamma;1)\succeq(\F)}
		(-1)^{\ell(\F)+\ell(\gamma)+1}h_{\lambda(\gamma)}h_{m-1}.
	\end{align}
	The purpose of this section is to convert the {\emma row-size sequence $\alpha(\F)$ of $\F$ into signs and types} that can be compared across the equivalence class $[\F]$.

	\begin{definition}\label{Def:type}
		Let $n=|\alpha|+|\beta|$. A function 
		$$\bh_{\F}:{\emma [1,n-m]}\to\{+,-\}$$ 
		is defined as follows: For $1\le x\le n-m$, set $\bh_{\F}(x)=+$ if $(x,n-m+1-x;1)\succeq (\F)$, and $\bh_{\F}(x)=-$ otherwise. 
		
		We say {\emma that} $x$ is of {type} $i$ if exactly $i$ of the two equalities $\bh_\F(x)=+$ and $\bh_\F(n-m+1-x)=+$ hold. For $i\in \{0,1,2\}$, the set of elements of type $i$ is denoted by $A_i$. In particular, if $x=n-m+1-x$, then $x\in A_0\cup A_2$.
	\end{definition}
	
	{\emma Intuitively, $\bh_\F(x)=+$ (resp. $-$) means that some initial consecutive rows of $\F$ can be merged so that $x$ boxes can (resp. cannot) be peeled off from the bottom. 
		Thus, the type of $x$ records whether this peeling is possible from neither end, exactly one end, or both ends. 
		
		As in the ribbon case by Billera, Thomas and van Willigenburg \cite{BTvW}, the positive breakpoints encode the associated compositions; here, this reconstruction is made explicit by the sets $S_\F$ and $T_\F$ in the proof of Lemma \ref{lem:canonical2}.}
	
	
	
	\begin{example}\label{Example:D}
		Let $\F=132\boxdot 42122$ be a $2$-regular thickened ribbon, then the types and the function $\bh_{\F}$ are given as below:
		\begin{center}
			\begin{tabular}{c|ccccccccccccccc}
				$x$ & $1$ & $2$ & $3$ & $4$ & $5$ & $6$ & $7$ & $8$ & $9$ & $10$ & $11$ & $12$ & $13$ & $14$ & $15$ \\
				\hline \\[-1.5ex]
				type & $1$ & $1$ & $0$ & $2$ & $1$ & $0$ & $1$ & $0$ & $1$ & $0$ & $1$ & $2$ & $0$ & $1$ & $1$ \\
				$\bh_{\F}(x)$ & $+$ & $-$ & $-$ & $+$ & $-$ & $-$ & $-$ & $-$ & $+$ & $-$ & $+$ & $+$ & $-$ & $+$ & $-$ \\
			\end{tabular}
		\end{center}
	\end{example}
	
	Fix $\F=\alpha\boxdot\beta$, we define an equivalence class $[i]$ for each positive integer $1\leq i\leq |\alpha|+|\beta|-m$, so that for any $x\in [i]$, the value $\bh_{\F}(x)$ is determined by $\bh_{\F}(i)$ and the $h$-expansion of $s_{\F}$. 
	
	\begin{definition}\label{def:equiv}
		For an $m$-regular thickened ribbon $\F=\alpha\boxdot\beta$ {\emma of size $n=|\alpha|+|\beta|$}, {\emma we define an} equivalence relation $\sim$ {\emma on the set $[1,n-m]$} as the closure of the following relations:
		\begin{enumerate}
			\item For $1\leq i\leq n-m$, we have $i\sim n-m+1-i$.
			\item For $1\leq i\leq|\alpha|-1$, we have $i\sim |\alpha|-i$.
			\item For $|\alpha|-m+2\leq i\leq n-m$, we have $i\sim |\alpha|+n-2m+2-i$.
		\end{enumerate}
	\end{definition}
	
	\begin{remark}
		The integer equivalence classes $[i]$ are new to the thickened ribbon setting, whereas $\bh_\F$ extends the function $h$ used for ribbons in~\cite{BTvW}. In the ribbon case, the signs of $h$ mark the breakpoints from which the composition can be constructed, and periodicity of this sign pattern detects a nontrivial composition factor. The present function {\emma $\bh_{\F}$ plays} the same role, except that the single overlap introduces the correction term $m-1$ and possible cancellations in the Jacobi-Trudi expansion.
	\end{remark}
	
	In the next lemma, the elements of each equivalence class $[i]$ are characterized.
	
	\begin{lemma}\label{lem:symmetry}
		For any $1\le i,j\le n-m$, $i$ and $j$ are equivalent if and only if $i\equiv j \mod r$ or $i\equiv m-1-j \mod r$,  where {\emma $n$ and $r$ are defined in (\ref{E:nr}).}
		
		Consequently, any equivalence class $[i]$ satisfies that {\emma $[i]\cap[1,r]$} contains exactly two elements unless $i\equiv m-1-i\mod r$, in which case {\emma $[i]\cap [1,r]=\{\min[i]\}$.}
	\end{lemma}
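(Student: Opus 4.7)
The plan is to set $a = |\alpha| - m + 1$ and $b = |\beta| - m + 1$, so that $r = \gcd(a,b)$ and $n - m = a + b + m - 2$. For the forward implication, I would observe that each of the three generating relations has the form $i + j = C$ with $C$ equal to $a+b+m-1$, $a+m-1$, or $2a+b+m-1$, and each of these is congruent to $m-1$ modulo $r$. Thus every single generator connects $i$ to a $j$ with $j \equiv m-1-i \pmod{r}$, and a straightforward induction on the length of a chain $i = i_0 \sim i_1 \sim \cdots \sim i_k = j$ shows that $j \equiv i \pmod{r}$ when $k$ is even and $j \equiv m-1-i \pmod{r}$ when $k$ is odd.

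For the converse, I would compose the three involutions, which I denote $T_1, T_2, T_3$, in pairs to extract translations. A direct calculation gives that $T_1 \circ T_2$ and $T_2 \circ T_1$ realize the shifts $i \mapsto i + b$ and $i \mapsto i - b$ respectively, while $T_3 \circ T_1$ and $T_1 \circ T_3$ realize the shifts $i \mapsto i \pm a$; moreover, each such shift is well-defined precisely when the target lies in $V = \{1, \ldots, n-m\}$. Since $\gcd(a,b) = r$, Bezout supplies integers $\lambda, \mu$ with $\lambda a + \mu b = r$, and so any two elements of $V$ sharing the same residue modulo $r$ should be connectable by a suitable sequence of $\pm a, \pm b$ shifts. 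Composing with $T_1$ itself (which sends a residue $c$ to $m-1-c$ modulo $r$) then bridges the residue classes of $i$ and $m-1-i$, completing the ``if'' direction.

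The step I expect to be the main obstacle is range management in the Bezout stage: a naive sequence of $\pm a, \pm b$ moves can easily push an intermediate value out of $V$. I plan to handle this by an interleaving strategy, exploiting the inequality $|V| = a + b + m - 2 \geq a + b$ (since $m \geq 2$) to ensure that there is always room to alternate ascending and descending shifts; alternatively, one can proceed by induction on $|i - j|$ and at each step exhibit a legitimate shift that keeps the next position inside $V$. Once this is settled, the final assertion of the lemma is immediate, since $\{1, \ldots, r\}$ is a complete set of residues modulo $r$ and the two residues $i \bmod r$ and $(m-1-i) \bmod r$ coincide exactly when $2i \equiv m-1 \pmod{r}$.
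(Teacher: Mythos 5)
Your proposal is correct and follows essentially the same route as the paper's proof. The paper likewise observes that each generating relation sends $i$ to $m-1-i$ modulo $r$ (giving the forward implication by transitivity), and for the converse it composes relation (1) with relations (3) and (2) to obtain the shifts $i \mapsto i + (|\alpha|-m+1)$ and $i \mapsto i - (|\beta|-m+1)$ — exactly your translations by $\pm a$ and $\mp b$ — and then invokes the Euclidean algorithm with an ``alternating'' sequence of these shifts, which is the same interleaving device you propose for keeping intermediate values inside the valid range. One minor remark: both you and the paper treat the range-management step rather lightly, asserting the alternating scheme works without a detailed verification; your observation that $|V| = a+b+m-2$ leaves room for at least one $+a$ and one $-b$ from any position is the right leverage point if one wanted to make this fully rigorous.
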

	\begin{proof}
		Suppose that $j\sim i$ through one of the relations $(1)$--$(3)$ in Definition \ref{def:equiv}, namely, $j=n-m+1-i$, $j=|\alpha|-i$ or $j=|\alpha|+n-2m+2-i$. In each case, we have $j\equiv m-1-i\mod r$. Since the map $i\mapsto m-1-i$ is an involution, transitivity of {\emma the} equivalence relation {\emma implies} that for any $i\sim j$,
		\begin{align*}
		j\equiv m-1-i\mod r\quad \mbox{ or }\quad j\equiv i \mod r.
	    \end{align*}
	
		Conversely, we {\emma will} show that $j\in [i]$ {\emma whenever} $j\equiv i\mod r$ or $j\equiv r+m-1-i \mod r$. For this, it suffices to prove that $i+r\in [i]$ and $r+m-1-i\in [i]$ for all $i$ such that $1\le i,i+r,r+m-1-i\le n-m$.
		
		For $1\leq i\leq |\beta|-m+1$,  relation (1) of Definition \ref{def:equiv} gives 
		$$n-m+1-i \sim i.$$
		Since $|\alpha|\le n-m+1-i\le n-m$, relation (3) yields
		$$i+|\alpha|-m+1 \sim n-m+1-i,$$
		hence  
		\begin{align}\label{E:ia}
			i+|\alpha|-m+1\in [i]. 
		\end{align}
		For $|\beta|-m+2\leq i\leq n-m$, relation (1) again gives $n-m+1-i\sim i$. Since $1\leq n-m+1-i \leq |\alpha|-1$, relation (2) implies $$n-m+1-i\sim i-|\beta|+m-1$$ and consequently
		\begin{align}\label{E:ib}
			i-|\beta|+m-1\in [i].
		\end{align}
		Without loss of generality assume $|\alpha|\le |\beta|$.  Since $r=\gcd(|\alpha|-m+1,|\beta|-m+1)$, the Euclidean algorithm {\emma gives}
		$\pm r=a(|\alpha|-m+1)-b(|\beta|-m+1)$ for some nonnegative integers $a,b$. Therefore, for any $1\leq i\leq n-r-m$, applying relations (\ref{E:ia}) $a$ times and (\ref{E:ib}) $b$ times alternatively yields $i+r\in[i]$, as desired.
		
		It remains to show that for $1\leq i\leq r$, we have $r+m-1-i\in [i]$. Since $|\alpha|-(r+m-1-i)\equiv i\mod r$, {\emma it follows from  (2) of Definition \ref{def:equiv} and the fact that $i+r\in [i]$ that} $$r+m-1-i\sim |\alpha|-(r+m-1-i)\sim i,$$ hence $r+m-1-i\in [i]$. In particular, $[i]\cap [1,r]$ contains exactly two elements only if $i\not\equiv m-1-i\mod r$. This completes the proof.
	\end{proof}
	\begin{example}
		We continue with the thickened ribbon $\F=132\boxdot 42122$ from Example \ref{Example:D}. Here  $\alpha=132$, $\beta=42122$, $m=2$, and $r=\gcd(|\alpha|-m+1,|\beta|-m+1)=5$. The equivalence class $[2]$ is $\{2,4,7,9,12,14\}$, so $[2]\cap [1,5]$ contains exactly two elements $2,4$. The equivalence class $[3]$ is $\{3,8,13\}$, so that $[3]\cap [1,5]$ has only one element $3$.
	\end{example}
	
	\section{When two ribbons are of equal size}\label{S:5}
	Throughout Section~5, $\F=\alpha\boxdot\beta$ denotes an $m$-regular thickened ribbon with exactly one $2\times m$ block and $|\alpha|=|\beta|$, {\emma unless otherwise stated.} 
	{\emma Recall that Lemma~\ref{lem:reduction} reduces the proof to $m$-regular thickened ribbons.}
	The equal-size case is {\emma somewhat} simpler than the unequal-size case.
	
	The proof mainly {\emma consists of} the following steps: \vspace{1mm}
	\begin{itemize}
		\item The sufficient condition for $\F\sim \E$ follows directly from Theorem \ref{thm:MvW} and the simple observation that $\F\sim \F^*$.\vspace{1mm}
		\item For the necessity, we {\emma analyze} the types of elements.
		If all elements are of even type, i.e., $\F=\F^*$, then $[\F]=\{\F\}$. Otherwise, $\F$ has type-one elements, i.e., $\F\ne \F^*$. For any $\E$ such that $\E\sim \F$ and $\bh_{\F}(k)=\bh_{\E}(k)=+$ for a fixed $k\in A_1$, we {\emma show} that $\E=\F$ by Lemma \ref{lem:canonical2} and Theorem \ref{thm:a=b}. Consequently,  {\emma regardless of} whether $\F$ admits a nontrivial factorization, $[\F]=\{\F,\F^*\}$. 
	\end{itemize}
	The starting point of the proof is a relation between the equivalence class of an $m$-regular thickened ribbon $\F$ and the function $\bh_{\F}$.
	\begin{lemma}\label{lem:canonical2}
		Let $\F=\alpha\boxdot\beta$ with $|\alpha|\le |\beta|$. Then for any $\E=\alpha'\boxdot\beta'$ such that $\E\sim\F$, we have $|\alpha'|=|\alpha|$ and $|\beta'|=|\beta|$. {\emma Moreover}, 
		$$\bh_\E=\bh_\F \iff \E=\F.$$
	\end{lemma}
	
	\begin{proof}
		Consider the sets
		\begin{align*}
			S_{\F}&=\{1\leq x\leq |\alpha|-1:\bh_\F(x)=+\}=\mathrm{SET}(\alpha),\\
			T_{\F}&=\{1\leq x\leq |\alpha|-1:\bh_\F(|\alpha|-m+1+x)=+\}=\mathrm{SET}(\beta),
		\end{align*}
		that is, $\alpha=\mathrm{Comp}(S_{\F})$ and $\beta=\mathrm{Comp}(T_{\F})$. It follows that the equality $\E=\F$ is equivalent to $S_{\F}=S_{\E}$ and $T_{\F}=T_{\E}$, which in turn is equivalent to $\bh_\E=\bh_\F$. {\emma Hence the lemma follows.}
	\end{proof}
	
	Lemma \ref{lem:canonical2} {\emma implies} that when $|\alpha|=|\beta|$, the size of $[\F]$ equals the number of functions $\bh_{\E}$ for all $\E\sim \F$. {\emma As we shall see, there are only} two such functions $\bh_{\F}$ and $\bh_{\F^*}$. To set up the proof framework, we need to introduce {\emma several} concepts associated {\emma with} the function $\bh_{\E}$, all of which are independent of the choice of representative $\E$ in the equivalence class of $\F$.
	
	\subsection{Invariants under equivalence}
	{\emma The purpose of this subsection is to identify the elements $x$ for which $\bh_{\F}(x)=\bh_{\E}(x)$ for all $\E\sim\F$, before we analyze the behavior of the sign function $\bh_{\F}$ on the remaining elements.}

	
	Let us begin by showing that the types of elements (see Definition \ref{Def:type}) are uniform {\emma within} any equivalence class.
	\begin{lemma}\label{lem:types2}
		For any $\E\sim \F$, element $x$ is of type $i$ in $\E$ if and only if it is of type $i$ in $\F$.
	\end{lemma}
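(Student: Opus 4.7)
The strategy is to recover $t_x(\D)$, the type of $x$ in $\D$, as a single coefficient in the $h$-expansion of $s_{\D}$. Set $n=|\D|$ and, for each $1\leq x\leq n-m$, let $\lambda_x$ denote the partition obtained by sorting the multiset $\{x,\,n-m+1-x,\,m-1\}$. Applying (\ref{eq:JT}) one obtains
\[
[h_{\lambda_x}]\,s_{\D} \;=\; (-1)^{\ell(\D)}\bigl(T_x(\D) - N_{\D}(\lambda_x)\bigr),
\]
where $T_x(\D)$ is the number of distinct ordered compositions $\gamma\in\{(x,n-m+1-x),(n-m+1-x,x)\}$ with $(\gamma;1)\succeq(\D)$, and $N_{\D}(\lambda_x)$ counts the three-row, single-box-dot coarsenings $\S=\alpha'_\S\boxdot\beta'_\S\succeq\D$ with $\lambda(\S)=\lambda_x$. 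Note that $T_x(\D)=t_x(\D)$ when $x\neq n-m+1-x$, and $T_x(\D)=t_x(\D)/2$ (with $t_x\in\{0,2\}$) otherwise, so in either case $T_x(\D)$ determines $t_x(\D)$.

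I would then exploit the standing hypothesis $|\alpha_\D|=|\beta_\D|=n/2$ to control $N_{\D}(\lambda_x)$. Any contributing $\S$ has $\ell(\alpha'_\S)+\ell(\beta'_\S)=3$, so one of the two factors collapses to a single row of size $|\alpha_\D|$ or $|\beta_\D|$, i.e., of size $n/2$; hence $N_{\D}(\lambda_x)=0$ unless $n/2\in\{x,\,n-m+1-x\}$, that is, unless $x\in\{n/2,\,n/2-m+1\}$. For these two exceptional values, $t_x(\D)$ is automatically $0$: inspecting the partial sums of the composition $\delta$ obtained from $\D$ by merging its lone box-dot shows that each partial sum is either at most $|\alpha|-\alpha_p\leq n/2-m$ (if it lies inside $\alpha$) or at least $|\alpha|+\beta_1-(m-1)\geq n/2+1$ (once the merged block is crossed), so $\bh_{\D}(y)=-$ for every $y\in[n/2-m+1,\,n/2]$.

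To transfer the conclusion to $\E$, the one missing structural ingredient is $|\alpha_\E|=|\beta_\E|=n/2$. I would establish this by computing $[h_{(n/2,n/2)}]\,s_{\D}=(-1)^{\ell(\D)}$ (the unique contribution is the full two-row coarsening $(n/2)\boxdot(n/2)\succeq\D$, with no second-sum contribution because $n/2\neq m-1$ as $n\geq 2m$) and comparing it to $[h_{(n/2,n/2)}]\,s_{\E}$; nonvanishing of the latter forces $\E$ to admit the analogous two-row coarsening, so $|\alpha_\E|=|\beta_\E|=n/2$. Combined with $\ell(\D)=\ell(\E)$ from Corollary~\ref{lem:basic}, the preceding vanishing and range analyses apply verbatim to $\E$: for $x\notin\{n/2,\,n/2-m+1\}$ both first sums vanish and $[h_{\lambda_x}]\,s_{\D}=[h_{\lambda_x}]\,s_{\E}$ yields $T_x(\D)=T_x(\E)$, hence $t_x(\D)=t_x(\E)$; at the two exceptional values both types are forced to $0$ and trivially agree.

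The main obstacle I anticipate is the bookkeeping in degenerate cases. When $\lambda_x$ acquires a repeated part --- for example if $n/2-m+1=m-1$, or in the similar cases $x=m-1$ or $x=n-2m+2$ where $m-1$ appears twice in $\lambda_x$ --- the enumeration of $N_{\D}(\lambda_x)$ must respect the constraint that any row of $\S$ of size $m-1$ is forbidden from being adjacent to the box-dot; and when $x=n-m+1-x$, the two candidate $\gamma$'s coincide, so $T_x$ recovers $t_x/2$ rather than $t_x$. These require a careful case-by-case verification, but the overall extraction strategy should go through with routine adjustments.
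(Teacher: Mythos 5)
Your proposal is correct and takes essentially the same approach as the paper: both extract the type of $x$ from the coefficient $[h_{\lambda(x,\,n-m+1-x,\,m-1)}]\,s_{\D}$, argue the first-sum (box-dot-preserving) contributions vanish because $|\alpha|=|\beta|=n/2\notin\{x,n-m+1-x,m-1\}$, and treat the exceptional values $x\in\{|\alpha|-m+1,|\alpha|\}$ separately by noting they are forced into $A_0$. Your treatment is slightly more careful than the paper's in two places --- you explicitly rederive $|\alpha_{\E}|=|\beta_{\E}|$ (the paper relies on Corollary \ref{lem:basic}), and you flag the self-paired case $x=n-m+1-x$ where the coefficient records $t_x/2$ rather than $t_x$, a subtlety the paper's phrase ``$x\in A_d$'' silently elides --- but the underlying argument is the same.
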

	
	\begin{proof}
		For $x\in\{|\alpha|-m+1,|\alpha|\}$, we have $x\in A_0$ for all $\E$ with $\E\sim \F$. Now suppose $x\notin\{|\alpha|-m+1,|\alpha|\}$. Let $\lambda=\lambda(2|\alpha|-x-m+1,x,m-1)$ and set $d=|[h_\lambda]s_\F|$. It is {\emma straightforward} to see that $\{(\CMcal{S};0)\succeq(\F):\lambda(\CMcal{S})=\lambda\}=\varnothing$. Consequently, $d=|\{(\gamma;1)\succeq(\F):\lambda(\gamma)=\lambda(2|\alpha|-x-m+1,x)\}|$ which is invariant under replacing $\F$ by any $\E\sim \F$. By Definition \ref{Def:type}, it follows that $x\in A_d$ for every $\E$ with $\E\sim \F$.
	\end{proof}
	
	We continue by categorizing equivalence classes of positive integers in the {\emma equal-size case} $|\alpha|=|\beta|$, as defined in Definition \ref{Def:equi2}. {\emma Recall that} $r=\gcd(|\alpha|-m+1,|\beta|-m+1)=|\alpha|-m+1$ and let
	\begin{align}\label{E:Ri}
		\CMcal{R}_i=\{i,|\alpha|-i,|\alpha|+i-m+1,2|\alpha|-m+1-i\},
	\end{align}
	By Lemma \ref{lem:symmetry}, for $i\ge m-1$, with $i=\min [i]$, one finds that
	\begin{align}\label{E:Ri2}
		[i]=\CMcal{R}_i
	\end{align}
	with $1\leq i\leq |\alpha|/2$ (since $i=\min [i]$). For $1\le i<m-1$, we have
	$$[i]=\CMcal{R}_{i}\cup \CMcal{R}_{m-1-i},$$
	with $1\leq i\leq (m-1)/2$. 
	\begin{example}
		Let $\F=132 \boxdot 2112$ be a $2$-regular thickened ribbon. Then $\alpha=132$ and $\beta=2112$ both of size $6$. By (\ref{E:Ri2})re22., we have
		$\R_1=[1]=\{1,5,6,10\}$.
	\end{example}
	\begin{definition}\label{Def:equi2}
		The set of equivalence classes $\{[i]:1\leq i\leq |\alpha|/2\}$ is partitioned into three {\emma families} as follows. For an equivalence class $[i]$ with $i=\min[i]$: 
		\begin{enumerate}
			\item if $[i]\subseteq A_1$ and $\bh_\F(i)=-\bh_\F(|\alpha|-i)$, then we say $[i]\in\mathcal{A}$;
			\item if $[i]\subseteq A_1$ and $\bh_\F(i)=\bh_\F(|\alpha|-i)$, then we say $[i]\in\mathcal{B}$;
			\item if $[i]\cap (A_0\cup A_2)\neq\varnothing$ and $[i]\cap A_1\neq\varnothing$, then we say $[i]\in\mathcal{B}$;
			\item if $[i]\subseteq A_0\cup A_2$, then we say $[i]\in\mathcal{C}$.
		\end{enumerate}
	\end{definition}
	\begin{example}\label{Example:9}
		The types and the function $\bh_{\F}$ for $\F=132\boxdot 2112$ are given as below.
		\begin{center}
			\begin{tabular}{c|ccccccccccccccc}
				$x$ & $1$ & $2$ & $3$ & $4$ & $5$ & $6$ & $7$ & $8$ & $9$ & $10$ \\
				\hline \\[-1.5ex]
				type & $1$ & $1$ & $1$ & $2$ & $0$ & $0$ & $2$ & $1$ & $1$ & $1$ \\
				$\bh_{\F}(x)$ & $+$ & $-$ & $-$ & $+$ & $-$ & $-$ & $+$ & $+$ & $+$ & $-$ \\
			\end{tabular}
		\end{center}
	   \vspace{2mm}
		We have $[1]=\R_1=\{1,5,6,10\}$, $[2]=\R_2=\{2,4,7,9\}$, $[3]=\R_3=\{3,8\}$ and $[i]\in \mathcal{B}$ for $1\le i\le 3$ by Definition \ref{Def:equi2}.
	\end{example}
	
	\begin{lemma}\label{lem:class2}
		The class to which $[i]$ belongs is invariant under equivalence. Moreover, if $[i]\in\mathcal{A}$, then $m\leq \min[i] \leq |\alpha|-m$ and $[i]=\CMcal{R}_i$.
	\end{lemma}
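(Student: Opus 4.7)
My plan is to prove the bound first, then use it in the invariance argument.

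For the bound, suppose $[i]\in\mathcal{A}$, so $[i]\subseteq A_1$. Unwinding the definition of $\bh_{\CMcal{D}}$ via (\ref{eq:JT}) in the two-part case, one sees that
\begin{align*}
\bh_{\CMcal{D}}(x)=+\iff x\in\mathsf{SET}(\alpha)\cup\bigl((|\alpha|-m+1)+\mathsf{SET}(\beta)\bigr).
\end{align*}
Since $\CMcal{D}$ is $m$-regular with a single $2\times m$ block, the two components flanking the box-dot have size at least $m$, so $\max\mathsf{SET}(\alpha)\le|\alpha|-m$ and $\min\mathsf{SET}(\beta)\ge m$. Combined with $|\alpha|=|\beta|$, the two sets on the right lie in disjoint intervals $\{1,\dots,|\alpha|-m\}$ and $\{|\alpha|+1,\dots,2|\alpha|-m\}$, and hence $\bh_{\CMcal{D}}\equiv -$ on $\{|\alpha|-m+1,\dots,|\alpha|\}$. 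This interval is stable under $x\mapsto n-m+1-x$, so it lies in $A_0$. As $\CMcal{R}_i\subseteq A_1$ must avoid it, inspecting the four members of $\CMcal{R}_i$ leaves the binding constraints $i\ge m$ and $|\alpha|-i\ge m$, whence $m\le\min[i]\le|\alpha|-m$. Since $i\ge m>m-1$, the case analysis preceding Definition \ref{Def:equi2} gives $[i]=\CMcal{R}_i$.

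For the invariance, Lemma \ref{lem:types2} ensures that the type of every integer is an invariant of $\sim$, so each of the mutually exclusive conditions \emph{$[i]\subseteq A_0\cup A_2$}, \emph{$[i]$ meets both $A_1$ and $A_0\cup A_2$}, and \emph{$[i]\subseteq A_1$} is $\sim$-invariant. This already separates $\mathcal{C}$ (case 4) and the mixed subcase of $\mathcal{B}$ (case 3) from the pure-$A_1$ cases (1)--(2), so it remains to distinguish $\mathcal{A}$ from the pure-$A_1$ subcase of $\mathcal{B}$.

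For this I propose using the coefficient $[h_\lambda]s_{\CMcal{D}}$ with $\lambda=(|\alpha|-i,|\alpha|-i,i,i)$, a partition of $|\CMcal{D}|=2|\alpha|$. From the bound $i\ge m$, the value $m-1$ is not a part of $\lambda$, so the $k=1$ sum in (\ref{eq:JT}) contributes nothing. In the $k=0$ sum, only coarsenings $\CMcal{S}=\alpha'\boxdot\beta'$ with $\ell(\alpha')=\ell(\beta')=2$ have $\lambda(\CMcal{S})=\lambda$, because any other split would force the part $|\alpha|=|\beta|$ into $\lambda$. A direct count of the two-coarsenings of $\alpha$ and of $\beta$ with multiset of parts $\{|\alpha|-i,i\}$ yields
\begin{align*}
[h_\lambda]s_{\CMcal{D}}=(-1)^{\ell(\CMcal{D})}\,c_\alpha\, c_\beta,
\end{align*}
where $c_\alpha=\mathbb{1}[i\in\mathsf{SET}(\alpha)]+\mathbb{1}[|\alpha|-i\in\mathsf{SET}(\alpha)]$ and $c_\beta$ is defined analogously. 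The $A_1$ constraints on $[i]$ force $c_\alpha+c_\beta=2$, so $c_\alpha c_\beta=1$ in class $\mathcal{A}$ and $c_\alpha c_\beta=0$ in pure-$A_1$ $\mathcal{B}$. This $\sim$-invariant coefficient completes the classification. The main obstacle is this last coefficient identity: ruling out both the $k=1$ contributions and the unbalanced splits $\{\ell(\alpha'),\ell(\beta')\}=\{1,3\}$ in the $k=0$ sum, which rely respectively on $i\ge m$ and on $|\alpha|=|\beta|$ (together with $|\alpha|\notin\{|\alpha|-i,i\}$). Once these are in place, the count factors cleanly as $c_\alpha c_\beta$, and the dichotomy $\mathcal{A}$ versus pure-$A_1$ $\mathcal{B}$ is witnessed by the nonvanishing of this single coefficient.
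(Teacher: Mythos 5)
Your proof is correct and follows essentially the same approach as the paper: you use Lemma \ref{lem:types2} to make the type-data $\sim$-invariant, observe (via the explicit description of $\bh_\D$ as indicator of $\mathsf{SET}(\alpha)\cup\bigl((|\alpha|-m+1)+\mathsf{SET}(\beta)\bigr)$) that $\{|\alpha|-m+1,\dots,|\alpha|\}\subseteq A_0$ to get the bound $m\le\min[i]\le|\alpha|-m$ and hence $[i]=\CMcal{R}_i$, and then distinguish $\mathcal{A}$ from the pure-$A_1$ case of $\mathcal{B}$ via the $\sim$-invariant coefficient of the same partition $\lambda(i,|\alpha|-i,i,|\alpha|-i)$. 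You unpack the count $|[h_\lambda]s_\D|=c_\alpha c_\beta$ a bit more explicitly than the paper, which simply asserts $d\in\{0,1\}$, but the argument is the same.
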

	
	\begin{proof}
		Lemma \ref{lem:types2} ensures that the types of all $j\in[i]$ are uniform {\emma across} all members of the equivalence class of $\F$. If $[i]\cap (A_0\cup A_2)\neq\varnothing$, we distinguish the class of $[i]$ according to whether $[i]\cap A_1=\varnothing$ or not. 
		
		If $[i]\subseteq A_1$ with $i=\min[i]$, then $m\leq i\leq |\alpha|-m$, because $|\alpha|-x\in [x]\cap A_0$ for all $x\leq m-1$. Let $\lambda=\lambda(i,|\alpha|-i,i,|\alpha|-i)$ and set $d=|[h_\lambda]s_\F|$. Then,
		$$[i]\in\mathcal{A}\iff d=1,\quad \mbox{ and }\quad [i]\in\mathcal{B}\iff d=0.$$
		Since $d=|[h_\lambda]s_\E|$ {\emma is invariant} for any $\E\sim \F$, the class of $[i]$ when $[i]\subseteq A_1$ is determined by $d$, as desired.
	\end{proof}
	
	It should be noted that $\mathcal{A}=\mathcal{B}=\varnothing$ if and only if $i\in A_0\cup A_2$, which is equivalent to $\alpha=\beta^*$, or in other words, $\F=\F^*$. It follows that the equivalence class of $\F$ {\emma consists solely of} $\F$.
	
	If $\mathcal{A}\cup\mathcal{B}\ne \varnothing$, then $A_1\ne \varnothing$. We fix the sign $\bh_{\F}(k)$ for a chosen $k\in A_1$, and then study the properties of every $\E\in [\F]$ satisfying $\bh_{\E}(k)=\bh_{\F}(k)$.
	
	\begin{definition}\label{Def:can1}
		We say {\emma that} $\F=\alpha\boxdot\beta$ with $|\alpha|=|\beta|$ is canonical if one of the following mutually exclusive conditions holds:
		\begin{itemize}
			\item $\mathcal{B}\neq\varnothing$ and $\bh_\F(k)=+$, where $k$ is the smallest element such that $k\in A_1$ and $[k]\in\mathcal{B}$;
			\item $\mathcal{B}=\varnothing$, $\mathcal{A}\neq\varnothing$ and $\bh_\F(k)=+$, where $k$ is the smallest element such that $[k]\in\mathcal{A}$;
			\item $\mathcal{A}=\mathcal{B}=\varnothing$.
		\end{itemize}
	\end{definition}
	\begin{example}
		The thickened ribbon $\F=132\boxdot 2112$ in Example \ref{Example:9} is canonical by Definition \ref{Def:can1}: {\emma indeed}, $\mathcal{B}\ne \varnothing$, and $1$ is the smallest integer such that $1\in A_1$, $[1]\in \mathcal{B}$ and $\bh_{\F}(1)=+$.
	\end{example}
	
	Since exactly one of $\F,\F^*$ is canonical when $A_1\ne \varnothing$, and since $\bh_{\F}(x)=-\bh_{\F^*}(x)$ for all $x\in A_1$, it suffices to characterize the equivalence classes of canonical thickened ribbons. In the sequel, we {\emma therefore} assume that $\F$ is canonical.
	
	\subsection{Main result of Section \ref{S:5}}
	We state the main theorem {\emma (Theorem \ref{thm:a=b}) for} the equal-size case and explain how it completes Theorem \ref{T:1}. 
	
	The three {\emma follow-up subsections} establish Theorem \ref{thm:a=b}. More explicitly, part (I) shows that all undetermined elements must {\emma lie} in class $\mathcal{A}$, part (II) provides necessary lemmas {\emma analyzing} the relations and periodicity among the undetermined elements, and part (III) {\emma applies} the lemmas in (II) to {\emma exclude} every undetermined element.
	
	\begin{theorem}\label{thm:a=b}
		Suppose that $\F$ is canonical. Then the only canonical thickened ribbon in the equivalence class $[\F]$ is $\F$ itself. 
	\end{theorem}
	Theorem \ref{thm:a=b}, together with Lemma \ref{lem:canonical2}, completes the proof for the necessity of Theorem \ref{T:1} for $\F=\alpha\boxdot \beta$ when $|\alpha|=|\beta|$. 
	Before {\emma proving} Theorem \ref{thm:a=b}, we show how {\emma it implies} Theorem \ref{T:1} {\emma in the equal-size case}.\vspace{2mm}
	
	{\em Proof of Theorem \ref{T:1} for $\F=\alpha\boxdot \beta$ with $|\alpha|=|\beta|$}.
	If $\F=\alpha\boxdot\beta$ admits a nontrivial decomposition, say $\F=\CMcal{S}\circ_{m-1} \T$, then since $|\alpha|=|\beta|$, the ribbon $\CMcal{S}$ has two rows of equal size, {\emma hence} $\CMcal{S}=\CMcal{S}^*$. Taking $\E=\CMcal{S}\circ_{m-1} \T^*=\F^*$ or $\E=\CMcal{S}\circ_{m-1} \T=\F$, {\emma we obtain} $\E\sim \F$. If $\F$ has only {\emma the} trivial decomposition, then $\F=1\circ\F$; setting $\E=1\circ\F^*$ or $\E=1\circ\F$ again gives $\E\sim\F$.
	
	Conversely, suppose $\E\sim\F$, where $\F=\alpha\boxdot\beta$ and $|\alpha|=|\beta|$. It suffices to show that $\E\in\{\F,\F^*\}$. Assume $\F$ is canonical. If $\E$ is also canonical, then Lemma \ref{lem:canonical2} and Theorem \ref{thm:a=b} guarantee that $\E=\F$. Since exactly one of $\F$ or $\F^*$ is canonical (unless $\F=\F^*$), it follows that $\E\in\{\F,\F^*\}$.
	Equivalently, $[\F]=\{\F,\F^*\}$ contains $2^{\kappa}$ elements, where $\kappa=1$ if $\F\ne \F^*$, and $\kappa=0$ otherwise.
	
	\qed
	
	The {\emma remainder} of this section is {\emma devoted to} proving Theorem \ref{thm:a=b}. {\emma Equivalently}, we must show that $\bh_{\E}(x)=\bh_{\F}(x)$ for all $x\in A_1$ and every canonical $\E\in [\F]$. To this end, we introduce {\emma the notations of} determined and undetermined elements.
	\begin{definition}\label{Def:det}
		Suppose $\F=\alpha\boxdot\beta$ is canonical. For $1\leq x\leq |\alpha|+|\beta|-m$, we say that $x$ is {determined} if $\bh_\E(x)=\bh_\F(x)$ for all canonical $\E\in [\F]$. Otherwise, $x$ is called {undetermined}.
		
		Furthermore, a relation $\bh_{\F}(x)=\bh_{\F}(y)$ is said to be {determined} if $\bh_{\E}(x)=\bh_{\E}(y)$ holds for every canonical $\E\in [\F]$. Determined relations involving three or more terms are defined analogously. 
		
	\end{definition}
	
	\subsection{Proof of Theorem \ref{thm:a=b}: Part I}
	Lemma \ref{lem:types2} guarantees that all elements of even type are determined. If $i$ is undetermined, then $i\in A_1$, {\emma and hence} $[i]\in\mathcal{A}\cup \mathcal{B}$. The {\emma following} lemma below {\emma rules out} the possibility that $[i]\in\mathcal{B}$ and asserts that all elements of {\emma such an equivalence class} are undetermined.
	
	\begin{lemma}\label{lem:sym2}
		If $i$ is undetermined, then $[i]\in\mathcal{A}$, and every $j\in [i]$ is also undetermined.
	\end{lemma}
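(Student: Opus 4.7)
The plan is to combine type and class invariance (Lemmas \ref{lem:types2} and \ref{lem:class2}), canonicity (Definition \ref{Def:can1}), and a single $h$-coefficient extracted from Proposition \ref{prop:JT}, so as to simultaneously force $[i]\in\mathcal{A}$ and propagate the sign flip throughout $[i]$. Since $\bh_\D$ is pinned by Lemma \ref{lem:types2} at every position of type $0$ or $2$, an undetermined $i$ must lie in $A_1$, so Lemma \ref{lem:class2} gives $[i]\in\mathcal{A}\cup\mathcal{B}$.

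I will rule out $[i]\in\mathcal{B}$ by contradiction. Fix a canonical $\E\sim\D$ with $\bh_\E(i)\ne\bh_\D(i)$ and let $k=\min\{x\in A_1:[x]\in\mathcal{B}\}$, so that $\bh_\D(k)=\bh_\E(k)=+$ by canonicity. With $L=|\alpha|=|\beta|$, I take $\lambda=\lambda(i,L-i,k,L-k)$. In the generic case $m-1\notin\{i,L-i,k,L-k\}$, Proposition \ref{prop:JT} reduces the coefficient to the $(\CMcal{S};0)$ terms with $\CMcal{S}=\alpha'\boxdot\beta'$ and $\alpha',\beta'$ two-part coarsenings of $\alpha,\beta$, and enumerating the eight ordered decompositions yields
\[
|[h_\lambda]s_\D|=\bigl(a(i)+a(L-i)\bigr)\bigl(b(k)+b(L-k)\bigr)+\bigl(a(k)+a(L-k)\bigr)\bigl(b(i)+b(L-i)\bigr),
\]
where $a(x)=1$ if $x\in\mathsf{SET}(\alpha)$ and $0$ otherwise (similarly for $b$ and $\mathsf{SET}(\beta)$). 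A short case analysis over cases (2) and (3) of Definition \ref{Def:equi2} for both $[i]$ and $[k]$ shows this expression vanishes precisely when $\bh_\D(i)=\bh_\D(k)$ and is strictly positive otherwise. Invariance of $|[h_\lambda]|$ under $\sim$ therefore forces $\bh_\E(i)=\bh_\D(i)$, contradicting our choice of $\E$. Hence $[i]\in\mathcal{A}$.

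Given $[i]\in\mathcal{A}$, Lemma \ref{lem:class2} guarantees $[i]=\CMcal{R}_i\subseteq A_1$. Lemma \ref{lem:types2} then flips $\bh_\E$ on the type pair $\{i,2L-m+1-i\}$, and the invariant $\mathcal{A}$-relation $\bh(i)=-\bh(L-i)$ (Lemma \ref{lem:class2}) forces the remaining type pair $\{L-i,L+i-m+1\}$ to flip as well, so $\bh_\E(j)\ne\bh_\D(j)$ for every $j\in[i]$. The principal obstacle is the sign-dichotomy case analysis for the bilinear coefficient above---both in the pure $A_1$ classes (Definition \ref{Def:equi2}(2)), where $(a(i)+a(L-i),b(i)+b(L-i))$ equals $(2,0)$ or $(0,2)$ according to $\bh_\D(i)$, and in the mixed case (3), where it takes $(1,0),(0,1),(2,1)$, or $(1,2)$---together with the degenerate subcase $k=m-1$ (arising when $[m-1]\in\mathcal{B}$ is the smallest $\mathcal{B}\cap A_1$ class), where $(\gamma;1)$ terms must be tracked in the computation of $|[h_\lambda]s_\D|$; a parallel enumeration handles this and still distinguishes $\bh_\D(i)=+$ from $\bh_\D(i)=-$.
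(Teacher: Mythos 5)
Your approach is the same as the paper's: rule out $[i]\in\mathcal{B}$ by evaluating $|[h_\lambda]s_\D|$ at $\lambda=\lambda(i,L-i,k,L-k)$ with $k$ the canonical pivot, and then propagate the sign flip over $\CMcal{R}_i$ using type and class invariance. Your bilinear formula for the generic case is correct (it reproduces the paper's table with the substitutions $a=a(k)+a(L-k)$, $b=b(k)+b(L-k)$), and your second-half propagation argument is a valid rephrasing of the paper's. However, there are three concrete gaps.

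First, the assertion that the bilinear expression ``vanishes precisely when $\bh_\D(i)=\bh_\D(k)$'' is false. For instance, if $L-i\in A_1$ and $\bh_\D(i)=+=\bh_\D(k)$, the coefficient equals $2b=2\bigl(b(k)+b(L-k)\bigr)$, which is nonzero whenever $L-k\in A_2$. What is actually true (and what the paper's table shows) is that for each fixed $A_j$-type of $L-i$ — itself invariant — the two possible values of the coefficient for $\bh_\D(i)=\pm$ are distinct because $a\neq b$; the determination is by distinguishing values, not by vanishing. Your later sentence gets the right conclusion, but the stated criterion is wrong.

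Second, your enumeration of eight ordered decompositions presupposes $\{i,L-i\}\cap\{k,L-k\}=\varnothing$, i.e.\ $\lambda$ has four distinct parts. When $i\in\{k,L-k\}$, the partition collapses, the cross-term structure disappears, and the formula fails. The paper dismisses this case up front: since $\bh_\D(k)=+$ is forced by canonicity and $[k]\in\mathcal{B}$, all four elements of $\CMcal{R}_k$ are determined, so $i\in\CMcal{R}_k$ contradicts $i$ being undetermined. You need this observation before enumerating.

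Third, the degenerate case requiring $(\gamma;1)$ contributions is not only $k=m-1$ but also $i=m-1$. Since $k$ is minimal in $\{x\in A_1:[x]\in\mathcal{B}\}$ and $i>k$, it is entirely possible that $k<m-1<i$ or — the case you do mention — $k=m-1<i$. The paper treats $k=m-1,i\neq m-1$ and $k\neq m-1,i=m-1$ as separate subcases; your write-up only acknowledges the former.
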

	
	\begin{proof}
		It suffices to consider $i\le |\alpha|-1$, since $i$ is undetermined if and only if $2|\alpha|-m+1-i$ is undetermined.
		
		If $[i]\in\mathcal{A}$, then Lemma \ref{lem:class2} gives $[i]=\{i,|\alpha|-i,|\alpha|-m+1+i,2|\alpha|-m+1-i\}$ with $\bh_\F(i)=-\bh_\F(|\alpha|-i)=\bh_\F(|\alpha|-m+1+i)=-\bh_\F(2|\alpha|-m+1-i)$. Consequently, if any $j\in [i]$ is determined, then every $x\in[i]$ must be determined.
		
		Suppose {\emma now} that $i$ is undetermined and $[i]\in\mathcal{B}\cup\mathcal{C}$. {\emma Since all elements in $\mathcal{C}$ are of even type and hence determined by Lemma \ref{lem:types2}}, we must have $[i]\in\mathcal{B}$. We shall prove that all elements of $[i]$ are determined, {\emma which will yield} a contradiction. Since $i\le |\alpha|-1$, $[i]\in \mathcal{B}$ and $i\in A_1$, exactly one of the following holds:
		\begin{enumerate}
			\item $|\alpha|-i\in A_0\cup A_2$, or
			\item $|\alpha|-i\in A_1$ and $\bh_\F(i)=\bh_\F(|\alpha|-i)$.
		\end{enumerate}
		
		The condition $[i]\in \mathcal{B}$ {\emma implies} that $\mathcal{B}\ne\varnothing$. Let $k$ be the smallest integer such that $[k]\in\mathcal{B}$ and $k\in A_1$. Since $\F$ is canonical, $\bh_\F(k)=+$, and all elements of $\CMcal{R}_k$ are determined. If $i\in\CMcal{R}_k$, then $i$ is determined and we are done. Otherwise, let $a$ be the number of $+$ {\emma signs} in the multiset $\{\bh_\F(k),\bh_\F(|\alpha|-k)\}$, and let $b$ be the number of $+$ signs in the multiset $\{\bh_\F(|\alpha|-m+1+k),\bh_\F(2|\alpha|-m+1-k)\}$. It is {\emma straightforward} to check that $a$ and $b$ are determined, and that $a\neq b$.
		
		Let $\lambda=\lambda(k,|\alpha|-k,i,|\alpha|-i)$ and let $d=|[h_\lambda]s_\F|$. We {\emma now} distinguish cases {\emma based on} whether $m-1$ is a part of $\lambda$.
		\begin{enumerate}
			\item If $k\ne m-1$ and $i\ne m-1$, then 
			$\{(\gamma;1)\succeq(\F):\lambda(\gamma,m-1)=\lambda\}=\varnothing$, thus $d=|\{(\CMcal{S};0)\succ(\F):\lambda(\CMcal{S})=\lambda\}|$. {\emma The} possible cases are {\emma summarized in the following table}:
			\begin{center}
				\begin{tabular}{c|cc}
					Case (1)& $\bh_\F(i)$ & $d$ \\
					\hline \\[-1.5ex]
					$|\alpha|-i\in A_0$ & $+$ & $b$ \\
					& $-$ & $a$ \\
					\hline \\[-1.5ex]
					$|\alpha|-i\in A_1$ & $+$ & $2b$ \\
					& $-$ & $2a$ \\
					\hline \\[-1.5ex]
					$|\alpha|-i\in A_2$ & $+$ & $a+2b$ \\
					& $-$ & $2a+b$ \\
				\end{tabular}
			\end{center}
			\item If $k=m-1$ and $i\ne m-1$, then $\lambda=\lambda(|\alpha|-m+1,i,|\alpha|-i,m-1)$ and $(a,b)=(1,0)$. Here the set $\{(\gamma;1)\succeq(\F):\lambda(\gamma,m-1)=\lambda\}$ may be nonempty. The possible cases are:
			\begin{center}
				\begin{tabular}{c|cc}
				 Case (2) & $\bh_\F(i)$ & $d$ \\
					\hline \\[-1.5ex]
					$|\alpha|-i\in A_0\cup A_2$ & $+$ & $0$ \\
					& $-$ & $1$ \\
					\hline \\[-1.5ex]
					$|\alpha|-i\in A_1$ & $+$ & $0$ \\
					& $-$ & $2$ 
				\end{tabular}
			\end{center}
			\item  If $k\ne m-1$ and $i=m-1$,  then $k<m-1$ and $\lambda=\lambda(|\alpha|-m+1,k,|\alpha|-k,m-1)$. Since $|\alpha|-k>|\alpha|-m+1$, we have $|\alpha|-k\in A_0$. Hence the only possible coarsened composition corresponding to $\lambda$ is $(k,|\alpha|-k\boxdot |\alpha|-m+1,m-1)$. 
			Consequently, 
			$$\bh_{\F}(m-1)=+\iff d=0,\quad \mbox{ and }\quad \bh_{\F}(m-1)=-\iff d=1.$$
		\end{enumerate} 
		Since $a\ne b$ and $d=|[h_{\lambda}]s_{\E}|$ for every canonical $\E \in [\F]$, the tables {\emma above} show that both $\bh_\F(i)$ and $\bh_\F(|\alpha|-i)$ are determined from $d$. That is,
		the elements $i$ and $|\alpha|-i$ are determined in each of the cases (1)--(3), {\emma contradicting the assumption that $i$ is undetermined}.
		
		{\emma Therefore, $[i]\in\mathcal{A}$, and all elements of $[i]$ are undetermined, as claimed.}
		
	\end{proof}
	
	\subsection{Proof of Theorem \ref{thm:a=b}: Part II}\label{ss:lemma1}
	We proceed with a sequence of lemmas {\emma that} specify {\emma the} relations between undetermined elements of $\F$. The essential ideas {\emma are drawn} from \cite{BTvW}, though several technical details must be {\emma adapted to the} thickened ribbon setting.
	
	\begin{lemma}\label{lem:BTvW4}
		Suppose $x,y,x+y\in A_1$. Then whether $$\bh_{\F}(x)=\bh_{\F}(y)=-\bh_{\F}(x+y)$$ holds can be determined.
	\end{lemma}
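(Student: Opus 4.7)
The plan is to exhibit an $h$-coefficient of $s_\D$ whose value --- being invariant under $\sim$ by definition --- detects whether the relation $\bh_\D(x)=\bh_\D(y)=-\bh_\D(x+y)$ holds. This mirrors the strategy used for ribbons in Lemma~5.2 of \cite{BTvW}, but the single $2\times m$ block forces extra care of the kind this lemma is designed to handle.

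My candidate partition is $\lambda=\lambda(x,\,y,\,n-m+1-x-y,\,m-1)$ with $n=|\alpha|+|\beta|$; its parts sum to $n$, and the appearance of $m-1$ is aligned with the $h_{m-1}$ factor in the second sum of (\ref{eq:JT}). Expanding via (\ref{eq:JT}) writes $[h_\lambda]s_\D=A-B$, where $A$ collects the contributions from $(\CMcal{S};0)\succeq(\D)$ with $\lambda(\CMcal{S})=\lambda$ (length-four coarsenings of $\D$ retaining the $\boxdot$), and $B$ collects the contributions from $(\gamma;1)\succeq(\D)$ with $\lambda(\gamma)=\lambda(x,y,n-m+1-x-y)$ (length-three coarsenings obtained after absorbing the $\boxdot$). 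For the sum $B$, each admissible $\gamma$ is a permutation of $(x,y,n-m+1-x-y)$ arising from cutting $\D$ at two positions with exactly one of the three resulting pieces straddling the $\boxdot$, and the admissibility of each ordering reads directly off the signs $\bh_\D(x),\bh_\D(y),\bh_\D(x+y)$ in light of Lemma~\ref{lem:sym2} applied to the classes $[x],[y],[x+y]$. Thus $B$ is an explicit expression in these three signs whose remaining coefficients depend only on equivalence-invariant data.

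The main obstacle is the sum $A$. Its coarsenings carry a part of size $m-1$, and many $(\CMcal{S};0)\in A$ differ from a $(\gamma;1)\in B$ only by whether the $\boxdot$ has been absorbed into an adjacent part $p\ge m$ abutting the $m-1$ part --- exactly the mechanism behind the cancellation in Example~\ref{Eg2}. I would construct an explicit sign-reversing pairing that matches each such $(\CMcal{S};0)\in A$ with the $(\gamma;1)\in B$ obtained by this absorption, so that the paired terms cancel since their lengths differ by one. What survives is a residual $A'-B'$ in which $A'$ depends only on the invariant data of the sets $A_0,A_1,A_2$ from Definition~\ref{Def:type} (equivalence-invariant by Lemma~\ref{lem:types2}), while $B'$ is proportional to the indicator of $\bh_\D(x)=\bh_\D(y)=-\bh_\D(x+y)$.

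Since $[h_\lambda]s_\D=[h_\lambda]s_\E$ for every $\E\sim\D$ and the non-indicator portion of $A'-B'$ is already determined across the $\sim$-class, the truth value of $\bh_\D(x)=\bh_\D(y)=-\bh_\D(x+y)$ is forced to be uniform on the class of $\D$, as required. The hardest step will be verifying that the proposed pairing is both injective and exhaustive of the coincidences between $A$ and $B$: any mismatch would leave stray cancellations capable of swallowing the indicator contribution and spoiling the invariance argument.
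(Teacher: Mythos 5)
You chose the right coefficient: $\lambda(x,y,n-m+1-x-y,m-1)$ with $n=|\alpha|+|\beta|=2|\alpha|$ is exactly the partition the paper uses. You also correctly split $[h_\lambda]s_\D$ into the contribution $A$ from pairs $(\CMcal{S};0)\succeq(\D)$ and the contribution $B$ from pairs $(\gamma;1)\succeq(\D)$ according to \eqref{eq:JT}. But then your plan goes astray: you treat the possible cancellations between $A$ and $B$ as the main obstacle and propose to build a sign-reversing pairing to cancel them, acknowledging at the end that you have not actually constructed it and that its exhaustiveness is the hard part.

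The gap is that under the hypothesis $x,y,x+y\in A_1$ this obstacle does not exist: one has $A=\varnothing$. The reason is short. In the $|\alpha|=|\beta|$ section, $|\alpha|$ and $|\alpha|-m+1$ are always in $A_0$, so $x,y,x+y\in A_1$ forces $\{x,y,x+y\}\cap\{|\alpha|-m+1,|\alpha|\}=\varnothing$. A pair $(\CMcal{S};0)\succeq(\D)$ with $\lambda(\CMcal{S})=\lambda$ would retain the $\boxdot$, so the parts on its left would have to sum to $|\alpha|$; but checking the $\binom{4}{k}$ subsets of $\{x,y,n-m+1-x-y,m-1\}$, each candidate sum-to-$|\alpha|$ condition is equivalent to one of $x$, $y$, or $x+y$ equaling $|\alpha|$ or $|\alpha|-m+1$ (using $x+y+z=2|\alpha|-m+1$ and $|\alpha|\ge m$), all excluded. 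With $A=\varnothing$, $d=|[h_\lambda]s_\D|$ is simply the cardinality of the multi-set $\{(\gamma;1)\succeq(\D):\lambda(\gamma,m-1)=\lambda\}$, and a direct sign check of the six possible orderings of $(x,y,z)$ shows $d=0$ exactly when $\bh_\D(x)=\bh_\D(y)=-\bh_\D(x+y)$ (and $d=2$ otherwise). Since $d$ is invariant under $\sim$, the lemma follows. So the hypothesis $x,y,x+y\in A_1$ is precisely what eliminates the cancellation mechanism you were worried about; missing this forces you into an unconstructed and unnecessary pairing argument, which is the substantive hole in your proposal. The appeal to Lemma~\ref{lem:sym2} for the admissibility of orderings is also not needed: admissibility reads off $\bh_\D$ directly.
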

	
	\begin{proof}
		Let $\lambda=\lambda(x,y,2|\alpha|-x-y-m+1,m-1)$ and set $d=|[h_\lambda]s_\F|$. Note that $x,y,x+y\in A_1$ implies $\{x,y,x+y\}\cap\{|\alpha|-m+1,|\alpha|\}=\varnothing$, which {\emma in turn yields} $\{(\CMcal{S};0)\succeq(\CMcal{\F}):\lambda(\CMcal{S})=\lambda\}=\varnothing$. Therefore, by (\ref{eq:JT}), the multiset $\{(\gamma;1)\succeq(\F):\lambda(\gamma,m-1)=\lambda\}$ contains exactly $d$ elements. Moreover, $\bh_{\F}(x)=\bh_{\F}(y)=-\bh_{\F}(x+y)$ if and only if $d=0$.
	\end{proof}
	
	\begin{lemma}\label{lem:BTvW5}
		Suppose exactly two of $x,y,x+y$ lie in $A_1$, say $i,j\in A_1$. Then whether $$\bh_{\F}(i)=\bh_{\F}(j)$$ holds can be determined, unless $m-1\in A_1$ and either $x+y=|\alpha|-m+1$ or $|\alpha|\in\{x,y\}$.
	\end{lemma}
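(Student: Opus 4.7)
Following the template of Lemma~\ref{lem:BTvW4}, I would work with
\[
\lambda \;=\; \lambda\bigl(x,\, y,\, 2|\alpha|-x-y-m+1,\, m-1\bigr)
\]
and try to recover the relation $\bh_\D(i)=\bh_\D(j)$ from the invariant quantity $d:=|[h_\lambda]s_\D|$. By (\ref{eq:JT}), the contributions to $d$ split into Type~I summands, coming from $(\CMcal{S};0)\succeq(\D)$ with $\lambda(\CMcal{S})=\lambda$ (where a part $m-1$ of $\CMcal{S}$ is a preserved row of $\D$), and Type~II summands, coming from $(\gamma;1)\succeq(\D)$ with $\lambda(\gamma)=\lambda(x,y,2|\alpha|-x-y-m+1)$ (where the $h_{m-1}$ factor is produced by the unique $\boxdot$-crossing of $\CMcal{D}=\alpha\boxdot\beta$).

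For the Type~II enumeration I would group the 3-part compositions $\gamma$ by which two of $\{x,\,y,\,2|\alpha|-x-y-m+1\}$ land in the coarsening $\alpha'$ of $\alpha$ versus $\beta'$ of $\beta$, prior to the $\boxdot$-crossing merge. The signed count in each group is driven by $\bh_\D$-evaluations at $x$, $y$, $x+y$, together with their mirror partners. Since by hypothesis exactly one of $\{x,y,x+y\}$ lies in $A_0\cup A_2$, Lemma~\ref{lem:types2} already determines the $\bh_\D$-values at that element and its partner, so they act as constants. Bundling the cases, the Type~II total can be written
\[
d_{\rm II} \;=\; A \;+\; B\cdot\mathbf{1}\{\bh_\D(i)=\bh_\D(j)\},
\]
with $A,B$ computable from the determined data. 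When Type~I is empty, $d=d_{\rm II}$, and provided $B\neq 0$ this distinguishes the two candidate sign relations.

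The exceptional cases arise when Type~I is nonempty. A part $m-1$ of $\CMcal{S}=\alpha'\boxdot\beta'$ cannot sit adjacent to the $\boxdot$ (boundary components there are $\geq m$), so it must be an extremal row of $\D$ preserved by the coarsening, forcing $\alpha_1=m-1$ or $\beta_q=m-1$, which is exactly $m-1\in A_1$. Given such a preserved $m-1$, the remaining triple $\{x,y,2|\alpha|-x-y-m+1\}$ must split across $\alpha'$ and $\beta'$ so that the two blocks sum to $|\alpha|-m+1$ and $|\alpha|$; an arithmetic check, after discarding configurations forbidden by $x,y\in A_1$, yields exactly the two scenarios $x+y=|\alpha|-m+1$ and $|\alpha|\in\{x,y\}$ stated in the lemma. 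In each such scenario I would pair every Type~I coarsening with the unique Type~II coarsening that produces the same $h$-monomial through a $\boxdot$-crossing merge—the cancellation mechanism illustrated in Example~\ref{Eg2}—and check that the relative signs $(-1)^{\ell(\CMcal{S})}$ and $(-1)^{\ell(\gamma)+1}$ supplied by (\ref{eq:JT}) are opposite on the sign-sensitive summands, killing the coefficient $B$ and leaving $d$ uninformative. The delicate step of the proof is this matching: verifying that in every non-exceptional configuration (even those with $m-1\in A_1$) the Type~I contribution lands only in the constant $A$, while in the two exceptional ones it contributes precisely $-B\cdot\mathbf{1}\{\bh_\D(i)=\bh_\D(j)\}$, by enumerating the splits $(p',q')\in\{(1,3),(2,2),(3,1)\}$ between $\alpha'$ and $\beta'$ and the two possible hosts for the preserved $m-1$.
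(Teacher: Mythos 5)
Your overall set-up — working with the partition $\lambda=\lambda(x,y,2|\alpha|-x-y-m+1,m-1)$, reading $d=|[h_\lambda]s_\D|$ off the Jacobi--Trudi expansion, and splitting contributions between box-dot-preserved coarsenings $(\CMcal{S};0)$ and box-dot-merged coarsenings $(\gamma;1)$ — is exactly the paper's starting point, and your organization of the ``Type~II'' count as a constant plus a sign-sensitive piece matches the case tables in the paper's treatment of the two generic subcases. However, the step where you dispose of the ``Type~I'' coarsenings contains a genuine error. You assert that a part $m-1$ of $\CMcal{S}=\alpha'\boxdot\beta'$ must be an extremal row of $\D$, ``forcing $\alpha_1=m-1$ or $\beta_q=m-1$, which is exactly $m-1\in A_1$.'' Neither implication is correct. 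Since $\lambda$ has four parts, $\alpha'$ or $\beta'$ can have three parts, so the $m-1$ part can sit as a \emph{middle} block (a consecutive run of rows of $\D$), not just a preserved first or last row; this does not correspond to a prefix or suffix sum equal to $m-1$ and hence has nothing to do with $\bh_\D(m-1)$ or $\bh_\D(|\alpha|+|\beta|-2m+2)$. Moreover, even the existence of an extremal $m-1$ block would give $m-1\in A_1\cup A_2$, not $A_1$.

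Concretely, Type~I is nonempty in several configurations that are \emph{not} excluded by the lemma. If $x+y=|\alpha|$ (so $z=|\alpha|-m+1$), or if $y=|\alpha|-m+1$, or if $x+y=|\alpha|-m+1$ with $m-1\in A_0\cup A_2$, or if $y=|\alpha|$ with $m-1\in A_0\cup A_2$, then there are valid splits of $\{x,y,z,m-1\}$ into parts summing to $|\alpha|$ on each side, so $\{(\CMcal{S};0)\succeq(\D):\lambda(\CMcal{S})=\lambda\}$ can be nonempty, independently of the type of $m-1$. In all four of these configurations the lemma still claims the relation is determined, yet the invariant $d$ no longer reads it off cleanly. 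The paper therefore does \emph{not} argue via $d$ in those cases: when $x+y=|\alpha|$ or $y=|\alpha|-m+1$ it appeals directly to the fact that the membership of $[x]$ in $\mathcal{A}$ versus $\mathcal{B}$ is an invariant (Lemma~\ref{lem:class2}, via $\lambda(i,|\alpha|-i,i,|\alpha|-i)$), and when $x+y=|\alpha|-m+1$ or $y=|\alpha|$ with $m-1\in A_0\cup A_2$ it replaces the triple $(x,y,x+y)$ by a new triple involving $m-1$ and reduces to the already-settled generic case. Your proposal folds all of this into an unverified claim that Type~I ``lands only in the constant $A$'' in the non-exceptional configurations; that is precisely the part that fails and that the paper must (and does) handle by a different mechanism. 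The blind proposal therefore has a real gap, and the Type~I versus $m-1\in A_1$ identification you rely on is false.
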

	
	\begin{proof}
		Let $\lambda=\lambda(x,y,2|\alpha|-x-y-m+1,m-1)$ and set $d=|[h_\lambda]s_\F|$. Note that $|\alpha|,|\alpha|-m+1\in A_0$. It is sufficient to consider the following cases (1)--(2) and their subcases:
		\begin{enumerate}
			\item $x,y\in A_1$:
			\begin{enumerate}
				\item $x+y=|\alpha|$;
				\item $x+y\notin\{|\alpha|-m+1,|\alpha|\}$;
				\item $m-1\in A_0\cup A_2$ and $x+y=|\alpha|-m+1$. 
			\end{enumerate}
			\item $x,x+y\in A_1$:
			\begin{enumerate}
				\item $y=|\alpha|-m+1$;
				\item $y\notin\{|\alpha|-m+1,|\alpha|\}$;
				\item $m-1\in A_0\cup A_2$ and $y=|\alpha|$.
			\end{enumerate}
		\end{enumerate}
	   {\emma Since the exceptional cases where $m-1\in A_1$ and either $x+y=|\alpha|-m+1$ or $|\alpha|\in\{x,y\}$ are not considered, case (1) excludes the subcase $m-1\in A_1$ and $x+y=|\alpha|-m+1$, and case (2) excludes the subcase $m-1\in A_1$ and $y=|\alpha|$. }
	
		
		Case (1)--(a): $x,y\in A_1$ and $x+y=|\alpha|$. Lemma \ref{lem:class2} and Definition \ref{Def:equi2} imply that $\bh_{\F}(x)=\bh_{\F}(y)$ (or $-\bh_\F(y)$) if and only if $[x]\in\mathcal{B}$ (or $\mathcal{A}$, respectively), and we are done.

		Case (2)--(a): $x,x+y\in A_1$ and $y=|\alpha|-m+1$. Lemma \ref{lem:class2} and Definition \ref{Def:equi2} imply $\bh_{\F}(x)=\bh_{\F}(x+y)$ (or $-\bh_\F(x+y)$) if and only if $[x]\in\mathcal{A}$  (or $\mathcal{B}$, respectively), and we are done.
		
		Case (1)--(b): Suppose $x,y\in A_1$ and $x+y\notin\{|\alpha|-m+1,|\alpha|\}$. Then $\{(\CMcal{S};0)\succeq(\CMcal{\F}):\lambda(\CMcal{S})=\lambda\}=\varnothing$. Hence, by (\ref{eq:JT}), the multiset $\{(\gamma;1)\succeq(\F):\lambda(\gamma,m-1)=\lambda\}$ has exactly $d$ elements. Furthermore, {\emma $x+y\in A_0\cup A_2$ and whether or not $\bh_\F(x)=\bh_\F(y)$ is determined.}
	   {\emma 
	   	\begin{center}
	   	\begin{tabular}{c|cc}
	   		Case (1)--(b) & $\bh_\F(x)/\bh_\F(y)$ & $d$ \\
	   		\hline \\[-1.5ex]
	   		$x+y\in A_0$ & $1$ & $0$ \\
	   		& $-1$ & $1$ \\
	   		\hline \\[-1.5ex]
	   		$x+y\in A_2$ & $1$ & $2$ \\
	   		& $-1$ & $3$ 
	   	\end{tabular}
	   \end{center}}
		
		Case (2)--(b): Suppose $x,x+y\in A_1$ and $y\notin\{|\alpha|-m+1,|\alpha|\}$. Then $\{(\CMcal{S};0)\succeq(\CMcal{\F}):\lambda(\CMcal{S})=\lambda\}=\varnothing$. Hence, by (\ref{eq:JT}), the multiset $\{(\gamma;1)\succeq(\F):\lambda(\gamma,m-1)=\lambda\}$ has exactly $d$ elements. Furthermore,  {\emma $y\in A_0\cup A_2$ and whether or not $\bh_\F(x)=\bh_\F(x+y)$ is determined.}
	
	    {\emma 
	    	\begin{center}
	    		\begin{tabular}{c|cc}
	    		Case (2)--(b)	& $\bh_\F(x)/\bh_\F(x+y)$ & $d$ \\
	    			\hline \\[-1.5ex]
	    			$y\in A_0$ & $1$ & $1$ \\
	    			& $-1$ & $0$ \\
	    			\hline \\[-1.5ex]
	    			$y\in A_2$ & $1$ & $3$ \\
	    			& $-1$ & $2$ 
	    		\end{tabular}
	    \end{center}}
		
		Case (1)--(c): $x,y\in A_1$, $m-1\in A_0\cup A_2$ and $x+y=|\alpha|-m+1$. 
		
		If $x,y$ are determined, then we are done. Otherwise, without loss of generality, assume $x$ is undetermined. Then, by Lemma \ref{lem:sym2}, we have $[x]\in\A$. Lemma \ref{lem:class2} then gives $m\le \min[x]\le |\alpha|-m$, which implies $m-1<|\alpha|-m+1$. {\emma This allows} us to apply Case (2)--(b) to the triple $(m-1,y,|\alpha|-x)$, {\emma which determines}
		whether $\bh_\F(|\alpha|-x)=\bh_\F(y)$ holds. Since $[x]\in\A$, we have $\bh_\F(|\alpha|-x)=-\bh_\F(x)$, {\emma and the desired statement follows}.
		
		Case (2)--(c): $x,x+y\in A_1$, $m-1\in A_0\cup A_2$ and $y=|\alpha|$. 
		
		If both $x$ and $x+y$ are determined, we are done. Otherwise, {\emma suppose} $x$ is undetermined. Then by Lemma \ref{lem:sym2}, we have $[x]\in\A$ and $x+|\alpha|-m+1\in A_1$. As in Case (1)--(c), we have
		$m-1<|\alpha|-m+1$ and the triple $(m-1,x+|\alpha|-m+1,x+y)$ satisfies the conditions of Case (2)--(b). Hence, it is determined whether $\bh_\F(x+|\alpha|-m+1)=\bh_\F(x+y)$. Since $[x]\in\A$, we have $\bh_\F(x)=\bh_\F(x+|\alpha|-m+1)$, {\emma and the desired conclusion follows.}
		
		If {\emma instead} $x+y$ is undetermined, then by Lemma \ref{lem:sym2}, we have $[x+y]\in\A$ and $x+m-1\in A_1$. The same argument {\emma applies to} the triple $(m-1,x,x+m-1)$, so we omit the details.
	\end{proof}
	
	We shall discuss the periodicity of {\emma the} undetermined elements of $\F$ {\emma using} the approach from \cite{BTvW}. We first introduce a sequence of functions $g_t$, and then examine the antisymmetry and periodicity of $g_t(x)$ with the help of Lemma \ref{lem:BTvW3}. Recall that $r=\gcd(|\alpha|-m+1,|\beta|-m+1)=|\alpha|-m+1$.
	
	\begin{definition}\label{Def:gt}
		Let $M$ be the set of all undetermined elements, and let $m_1=\min(M)$. Suppose $\{i\in M:1\leq i\leq m_1+r \}=\{m_1,m_2,\dots,m_z\}$ with $m_1<m_2<\cdots<m_z$. For $1\leq t\leq z$, let $$r_t=\gcd(m_1,m_2,\dots,m_t).$$ 
		For $2\leq t\leq z$, we define a function $g_t:[1,m_t-1]\backslash\{|\alpha|-m+1,|\alpha|\}\to\{0,1,-1,*\}$ as follows:
		$$g_t(x)=\begin{cases}
			* & \text{ if } r_{t-1}|x,\\
			0 & \text{ if }r_{t-1}\nmid x \text{ and } x\in A_0\cup A_2,\\
			1 & \text{ if }r_{t-1}\nmid x \text{ and } x\in A_1 \text{ and }\bh_\F(x)=+,\\
			-1 & \text{ if }r_{t-1}\nmid x \text{ and } x\in A_1 \text{ and }\bh_\F(x)=-.
		\end{cases}$$
		For $x\in \{|\alpha|-m+1,|\alpha|\}$, we set $g_z(x)=0$.
	\end{definition}
	
	The functions $g_t$ retain {\emma precisely} the information needed for the periodicity argument. A value {\emma of} $1$ or $-1$ records the sign of a determined type-one position; $0$ records an even-type position; and the symbol $*$ masks multiples of $r_{t-1}$, where unresolved behavior may remain. As $t$ increases, {\emma the value of $r_t$} decreases. {\emma The periodicity and antisymmetry of $g_t$} force the masked set to shrink until a hypothetical undetermined position would have to be of even type, which is impossible.
	
	\begin{lemma}\label{lem:BTvW3}[Lemma 5.5 of \cite{BTvW}]
		Let $f$ be a function on {\emma $[1,d-1]$} which takes values $0,1,-1,*$, and suppose that there is some $c<d$, such that for all $x$ for which both sides are well-defined
		\begin{enumerate}
			\item $f(x)=-f(d-x)$,
			\item $f(x)=f(c+x)$,
			\item $f(x)=-f(c-x)$
		\end{enumerate}
		except that if either side equals $*$, the equation is not required to hold. Further, we require that the points where $f$ takes the value $*$ are either exactly the multiples of $c$ less than $d$, or else no point at all. Let $r_0=\gcd(c,d)$. Then on multiples of $r_0$, $f$ takes on only the values $0$ and $*$. On nonmultiples of $r_0$, $f$ is $r_0$-periodic, and $f$ is antisymmetric on {\emma $[1,r_0-1]$.}
	\end{lemma}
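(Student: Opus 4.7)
The plan is to extract two compatible periods for $f$ from the three symmetry relations, apply the Euclidean algorithm to obtain period $r_0 = \gcd(c,d)$, and then use the antisymmetries to pin down values at multiples of $r_0$. The key step is to convert the two antisymmetries (1) and (3) into a second period.

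Combining (1) and (3) at the same $x$ gives $f(d-x) = -f(x) = f(c-x)$, whenever all three quantities are defined and non-$*$. Setting $y = c-x$ turns this into $f(y+(d-c)) = f(y)$, so $(d-c)$ is a period of $f$. Together with the $c$-period from (2), $f$ admits the two periods $c$ and $d-c$, and a Euclidean-style shifting argument yields period $r_0 = \gcd(c, d-c) = \gcd(c,d)$ on the portion of the domain where the chain of shifts avoids $*$-points. Because the $*$-set is either empty or exactly the set of multiples of $c$, and the latter is contained in the multiples of $r_0$, the $r_0$-periodicity holds cleanly on non-multiples of $r_0$: every intermediate point in the Euclidean chain lies among non-multiples of $r_0$ and is therefore non-$*$.

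For the antisymmetry on $\{1,\ldots,r_0-1\}$: given $x \in \{1,\ldots,r_0-1\}$, relation (3) gives $f(c-x) = -f(x)$, and $r_0$-periodicity (shifting $c-x$ downward in multiples of $r_0$ through non-multiples of $r_0$) yields $f(c-x) = f(r_0-x)$, so $f(r_0-x) = -f(x)$. For a non-$*$ multiple of $r_0$, say $x = kr_0$ which is not a multiple of $c$, both $x$ and $c-x$ are non-$*$ multiples of $r_0$, and one can construct a shift path from $x$ to $c-x$ in steps of $r_0$ passing only through non-multiples of $c$; this forces $f(x) = f(c-x) = -f(x)$, hence $f(x) = 0$. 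The main obstacle throughout is the careful bookkeeping around the $*$-values: the Euclidean chains must never land on a multiple of $c$, where the three relations are not assumed to hold, and the concluding argument at multiples of $r_0$ requires an explicit path through non-$*$ representatives to connect $f(x)$ and $f(c-x)$, which is the technical heart of the proof.
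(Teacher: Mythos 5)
The paper does not prove this result; it is quoted verbatim as Lemma 5.5 of Billera, Thomas, and van Willigenburg \cite{BTvW} and used as a black box, so there is no in-paper proof to compare against. That said, your strategy---derive a $(d-c)$-period from relations (1) and (3), combine it with the $c$-period from (2) via the Euclidean algorithm, and then use relation (3) together with the derived periodicity to force the values at multiples of $r_0$ to vanish---is the natural one and almost certainly matches the argument in \cite{BTvW}.

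The genuine gap is in your treatment of the claim that $f$ is $0$ or $*$ on multiples of $r_0$. You assert that for a non-$*$ multiple $x$ of $r_0$ there is ``a shift path from $x$ to $c-x$ in steps of $r_0$ passing only through non-multiples of $c$,'' but you give no construction, and the situation is precisely the delicate one: every point reachable from $x$ by $r_0$-steps lies in the zero residue class modulo $r_0$, which is exactly where the $*$-set (the multiples of $c$) lives, so avoiding $*$ is no longer automatic the way it is on nonzero residue classes. Already for small cases such as $c=6$, $d=14$, $r_0=2$ one finds that relation (1) applied directly at $x=2$ lands on the $*$-point $12$; the correct conclusion $f(2)=0$ is obtained only by first applying relations (1) and (2) at the different multiple $x=4$ and then chasing back. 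Your sketch does not explain how to choose a good starting multiple or route in general, and this is where the actual work lies. The derivation of the auxiliary $(d-c)$-period is also stated without noting that it only holds a priori for $y\in\{1,\dots,c-1\}$, so the Euclidean combination on the finite interval $\{1,\dots,d-1\}$ needs the usual but nontrivial domain bookkeeping; you correctly flag this but do not carry it out. The antisymmetry argument on $\{1,\dots,r_0-1\}$ is sound given the $r_0$-periodicity on nonmultiples of $r_0$.
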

	
	\begin{lemma}\label{lem:D2}
		The function $g_t$ satisfies the assumptions of Lemma \ref{lem:BTvW3} {\emma in} the following two cases:
		\begin{enumerate}
			\item $m-1\in A_0\cup A_2$, and for each $2\leq t\leq z$, take $d=m_t$ and $c=r_{t-1}$. 
			\item $m-1\in A_1$, and for each $2\leq t\leq z-1$, take $d=m_t$ and $c=r_{t-1}$. 
		\end{enumerate}
		Consequently, if we set $s=z$ for (1) and $s=z-1$ for (2), then
		$g_s$ is $r_s$-periodic on {\emma $[1,m_{s}-1]$} except {\emma at} multiples of $r_s$, and $g_s$ is antisymmetric on {\emma $[1,r_s-1]$.} Moreover, if $x<m_1$ and $r_s\mid x$, then $x\in A_0\cup A_2$.
	\end{lemma}
	
	\begin{proof}
		Note that if $m-1\in A_1$, then $d\leq m_{z-1}< |\alpha|-m+1$, which allows us to apply Lemmas  \ref{lem:BTvW4} and \ref{lem:BTvW5}. Our proof consists of three steps. In the first two steps, we show that $g_t$ {\emma satisfies} properties (1) and (2) of Lemma \ref{lem:BTvW3} directly. In the last step we prove that property (3) also holds for $g_t$ by induction on $t$. 
		
		Fix an arbitrary $1\leq x\leq m_t-1$ such that $g_t(x)\neq *$. 
		
		(1) Suppose $g_t(x)=0$; that is $x\in A_0\cup A_2$. From the triple $(x,d-x,d)$, Lemma \ref{lem:BTvW5} asserts that either $d-x\in A_0\cup A_2$ or $d-x$ is undetermined i.e. $g_t(d-x)=0$ or $*$. 
		
		Suppose $|g_t(x)|=1$, thus $x\in A_1$ and $x$ is determined. From the triple $(x,d-x,d)$, Lemmas \ref{lem:BTvW4} and \ref{lem:BTvW5} assert that either $d-x\in A_1$ with $\bh_\F(d-x)=-\bh_\F(x)$, or $d-x$ is undetermined i.e. $g_t(d-x)=-g_t(x)$ or $*$. 
		
		This shows that $g_t$ satisfies condition (1) of Lemma \ref{lem:BTvW3}.
		
		(2) Suppose $g_t(x)=0$, thus $x\in A_0\cup A_2$. For any $1\leq s< t$ such that $x+m_s< m_t$, since $m_s$ is undetermined, Lemma \ref{lem:BTvW5} {\emma applied to} the triple $(x,m_s,x+m_s)$ gives that either $x+m_s\in A_0\cup A_2$ or $x+m_s$ is undetermined. But if $x+m_s$ is undetermined, then $r_{t-1}|x$ and $g_t(x)=*$, contradicting to $g_t(x)=0$. {\emma Hence} $x+m_s\in A_0\cup A_2$ i.e. $g_t(x+m_s)=g_t(x)=0$.
		
		Suppose $|g_t(x)|=1$, thus $x\in A_1$ and $x$ is determined. For any $1\leq s< t$ such that $x+m_s<m_t$, since $m_s$ is undetermined, Lemmas \ref{lem:BTvW4} and \ref{lem:BTvW5} {\emma applied to} the triple $(x,m_s,x+m_s)$ assert that either $x+m_s\in A_1$ with $\bh_\F(x+m_s)=\bh_\F(x)$, or $x+m_s$ is undetermined. If {\emma the latter holds}, then $r_{t-1}|x$ and $g_t(x)=*$, contradicting to $|g_t(x)|=1$. {\emma Therefore}, $x+m_s\in A_1$ and $g_t(x+m_s)=g_t(x)$.
		
		It follows that for all $1\leq s<t$, $g_t$ is $m_s$-periodic except {\emma at} multiples of $r_{t-1}$. {\emma By} the Euclidean algorithm, this implies that $g_t$ is $r_{t-1}$-periodic except at multiples of $r_{t-1}$. This verifies condition (2) of Lemma \ref{lem:BTvW3}.
		
		(3) We {\emma now} prove {\emma by induction on $t$} that equation (3) of Lemma \ref{lem:BTvW3} holds for $f=g_t$. {\emma For} $t=2$, we have $r_{t-1}=m_1$, and condition (3) reduces to the case (1) above for $t=1$; hence the base case is true.
		
		{\emma Assume} condition (3) holds for some $t$. Then $g_t$ satisfies the assumptions of Lemma \ref{lem:BTvW3}, so by that lemma $g_t$ is antisymmetric on $[1,r_t-1]$, where $r_t=\gcd(r_{t-1},m_t)$.  Consequently, 
		\begin{align*}
			g_{t+1}(x)=g_t(x)=-g_t(r_t-x)=-g_{t+1}(r_t-x)
		\end{align*}
		for $1\leq x\leq r_t-1$. Thus equation (3) also holds for $t+1$,  {\emma completing the induction}.
		
		We have {\emma shown} that $g_t$ satisfies the conditions of Lemma \ref{lem:BTvW3}. Now suppose $x<m_1$ and $r_s\mid x$. Let $t$ be the smallest integer such that $r_t\mid x$. Then $2\leq t\leq s$, and Lemma \ref{lem:BTvW3} implies that $g_t(x)=0$, since $r_{t-1}\nmid x$ while $r_t\mid x$.
	\end{proof}
	
	\subsection{Proof of Theorem \ref{thm:a=b}: Part III}
	This part uses {\emma results} from part II to show {\emma that} all elements are determined. We prove Theorem \ref{thm:a=b} by distinguishing the cases $m-1\in A_1$ {\emma and} $m-1\in A_0\cup A_2$, which are treated in Propositions \ref{lem:final2} and \ref{lem:final3}, respectively. 
	
	\begin{proposition}\label{lem:final2}
		Theorem \ref{thm:a=b} holds for $m-1\in A_1$.
	\end{proposition}
	
	\begin{proof}
		If $m-1\in A_1$, then $[m-1]\in\mathcal{B}$ and $m-1$ is determined. 
		
		Suppose that there exists an undetermined element of $\F$, let $x$ be the smallest undetermined element, then by Lemmas \ref{lem:symmetry}, \ref{lem:class2} and \ref{lem:sym2} one sees that $m\le x<|\alpha|-x$, that is, $$m\le x<\frac{|\alpha|}{2},$$ which {\emma justifies} the use of Lemmas \ref{lem:BTvW4} and \ref{lem:BTvW5}. 
		
		Lemma \ref{lem:D2} proves that $g_{z-1}(u)$ is $r_{z-1}$-periodic on $[1,m_{z-1}-1]$ except at multiples of $r_{z-1}$, where $r_{z-1}\vert \gcd(x,|\alpha|-x)$, thus $r_{z-1}\le x<|\alpha|/2$. 
		
		Since $m-1\in A_1$ is determined and $m-1<x=m_1$, Lemma \ref{lem:D2} implies $r_{z-1}\nmid (m-1)$. {\emma Write} 
		\begin{align*}
			m-1\equiv a\mod r_{z-1}
		\end{align*}
		with $a\ne 0$. Then, by the antisymmetry and periodicity of $g_{z-1}$, 
		\begin{align}\label{E:gz}
			g_{z-1}(a)=g_{z-1}(m-1)=-g_{z-1}(r_{z-1}-a)=\pm 1.
		\end{align}
		We {\emma now aim to prove} that $g_{z-1}$ is $(m-1)$-periodic except at multiples of $r_{z-1}$, i.e.,
		\begin{align}\label{E:gpe}
			g_{z-1}(u)=g_{z-1}(u+m-1)
		\end{align}
		for $1\le u\le m_{z-1}-m$ such that $r_{z-1}\nmid u$ and $r_{z-1}\nmid (u+m-1)$. 
		
		Suppose that (\ref{E:gpe}) is proved. Let $\sigma=\gcd(r_{z-1},m-1)$, then $g_{z-1}(u)$ is also $\sigma$-periodic except at multiples of $r_{z-1}$, with $1\le \sigma<r_{z-1}\ne 1$ (since $r_{z-1}\nmid (m-1)$). {\emma Consequently}, $g_{z-1}(a)=g_{z-1}(r_{z-1}-a)$ {\emma because} both $a$ and $r_{z-1}-a$ are multiples of $\sigma$ but not of $r_{z-1}$. {\emma This contradicts (\ref{E:gz}). Therefore all elements are determined, completing the proof.}
		
		{\emma It remains to prove (\ref{E:gpe})}. Fix $1\le u\le m_{z-1}-m$ with $r_{z-1}\nmid u$ and $r_{z-1}\nmid (u+m-1)$. Since $m_{z-1}$ is undetermined, we have $m_{z-1}\leq |\alpha|-m$, hence $u+m-1<|\alpha|-m$. 
		
		We claim that it suffices to prove (\ref{E:gpe}) for $u\in A_1$. {\emma We prove it by showing that
		\begin{align}\label{E:u02}
			u\in A_0\cup A_2\,\Longrightarrow u+m-1\in A_0\cup A_2.
		\end{align}
        That is, $g_{z-1}(u)=0\,\Longrightarrow g_{z-1}(u+m-1)=0$.} If $u\in A_0\cup A_2$, then Lemma \ref{lem:BTvW5} {\emma applied to} the triple $(u,m-1,u+m-1)$, asserts that either $u+m-1\in A_0\cup A_2$ or $u+m-1\in A_1$ is determined. {\emma Assuming} (\ref{E:gpe}) {\emma holds} for all $u\in A_1$, the latter case would imply $|g_{z-1}(u)|=1$, {\emma contradicting} $g_{z-1}(u)=0$. {\emma Hence} (\ref{E:u02}) is proved, as desired.
		
		We now prove (\ref{E:gpe}) for $u\in A_1$ by repeatedly applying Lemmas \ref{lem:BTvW4} and \ref{lem:BTvW5}. {\emma Equivalently, we establish that}
		\begin{align}\label{E:u1}
			u\in A_1\,\Longrightarrow u+m-1\in A_1 \,\mbox{ is determined and }\,\bh_{\F}(u+m-1)=\bh_{\F}(u).
	\end{align}
		The conditions $r_{z-1}\nmid u$ and $r_{z-1}\nmid (u+m-1)$ imply that both $u$ and $u+m-1$ are determined. Since $r-u\in [u+m-1]$ is determined, consider the triples 
		\begin{align*}
			(x,u,x+u)\quad \mbox{ and }\quad(r-u,x+u,r+x).
		\end{align*}
		Lemmas \ref{lem:BTvW4} and \ref{lem:BTvW5}, together with $r_{z-1}\nmid (x+u)$, imply that $x+u\in A_1$ is determined and $\bh_{\F}(x+u)=\bh_{\F}(u)$. Furthermore, $r-u\in A_1$ is determined and satisfies $\bh_{\F}(r-u)=-\bh_{\F}(x+u)$. 
		
		Recall that $r_{z-1}\mid  \gcd(x,|\alpha|-x) \mid |\alpha|$. Since $r_{z-1}\nmid (u+m-1)$, we also have
		\begin{align*}
			r_{z-1} \nmid (|\alpha|-r+u), \quad r_{z-1} \nmid (r-u)\quad \mbox{ and }\quad r_{z-1}\nmid (r-u-x).
		\end{align*}
		If $x+u<r$, then from the triple $(r-u-x,x,r-u)$ and {\emma the fact} $r_{z-1}\nmid (r-u-x)$, we {\emma obtain} that $r-u-x\in A_1$ is determined and $\bh_{\F}(r-u-x)=\bh_{\F}(r-u)=-\bh_{\F}(u)$. {\emma Applying} the triple $(u+m-1,r-u-x,|\alpha|-x)$, {\emma then yields} that $u+m-1\in A_1$ is determined with $$\bh_{\F}(u+m-1)=-\bh_{\F}(r-u-x)=\bh_{\F}(u),$$ as desired. If $x+u>r$, then from the triple $(x+u-r, r-u, x)$ and $r_{z-1}\nmid (x+u-r)$, we get $x+u-r\in A_1$ determined and $\bh_{\F}(x+u-r)=-\bh_{\F}(r-u)=\bh_{\F}(u)$. Finally, applying the triple $(x+u-r, |\alpha|-x,u+m-1)$ with $r_{z-1}\nmid (u+m-1)$ {\emma gives} that $u+m-1\in A_1$ is determined and $$\bh_{\F}(u+m-1)=\bh_{\F}(x+u-r)=\bh_{\F}(u).$$ This completes the proof of (\ref{E:u1}) and (\ref{E:gpe}), as wished.
	\end{proof}
	
	\begin{proposition}\label{lem:final3}
		Theorem \ref{thm:a=b} holds for $m-1\in A_0\cup A_2$.
	\end{proposition}
	
	\begin{proof}
		The statement holds trivially for canonical $\F$ satisfying $\F=\F^*$, so we focus on canonical $\F$ such that $\F\ne \F^*$, i.e., $A_1\ne\varnothing$.
		We claim that 
		\begin{align}\label{E:mindet}
		x =\min(A_1)\,\mbox{ is determined}.
	\end{align}
		Suppose {\emma not}. Then we must have $\mathcal{B}\neq\varnothing$. {\emma Indeed, if}  $\mathcal{B}=\varnothing$, then by Lemma \ref{lem:sym2}, $[x]\in \A\ne \varnothing$, and since $\F$ is canonical, $x=\min(A_1)$ is determined, {\emma contradicting the assumption that} $x$ is undetermined. {\emma Hence} $\mathcal{B}\ne \varnothing$. 
		
		Let $k$ be the smallest integer such that $k\in A_1$ and $[k]\in\mathcal{B}$. Since $\F$ is canonical, $\bh_{\F}(k)=+$. Moreover, $m\leq x<k<r$, $k$ is determined, and one of the following holds:
		\begin{enumerate}
			\item $k+r\in A_0\cup A_2$,
			\item $k+r\in A_1$ and $\bh_\F(k)=-\bh_\F(k+r)$.
		\end{enumerate}
		
		{\emma By} Lemma \ref{lem:sym2}, $|\alpha|-x$ is also undetermined. From the triple $(|\alpha|-x,k,|\alpha|-x+k)$, Lemmas \ref{lem:BTvW4} and \ref{lem:BTvW5} imply that $|\alpha|-x+k\in A_1$. We {\emma now consider} whether $|\alpha|-x+k$ is determined or not.
		
		If $|\alpha|-x+k\in A_1$ is undetermined, then $k-x+m-1\in [|\alpha|-x+k]$ is undetermined by Lemma \ref{lem:sym2}. {\emma Applying} Lemmas \ref{lem:BTvW4} and \ref{lem:BTvW5} to the triples
		\begin{align*}
			(x-m+1,k-x+m-1,k)\quad\mbox{ and }\quad (x-m+1,|\alpha|-x+k,k+r),
		\end{align*}
		 together with properties (1) or (2), yields that $x-m+1$ is undetermined, contradicting the minimality of $x$.
		
		{\emma Now} suppose $|\alpha|-x+k\in A_1$ is determined. Since $m-1\in A_0\cup A_2$, Lemma \ref{lem:BTvW5}, applied to the triple $(m-1,k-x+r,|\alpha|-x+k)$,  implies that $k-x+r$ is determined, hence $k-x$ is determined {\emma regardless of whether} $k-x\in A_0\cup A_2$ or $k-x\in A_1$. {\emma However}, from the triples 
		\begin{align*}
			(k-x,x,k) \quad \mbox{ and }\quad (k-x,x+r,k+r),
		\end{align*}
		Lemmas \ref{lem:BTvW4} and \ref{lem:BTvW5}, combined with properties (1) or (2), imply that $k-x$ must be undetermined, a contradiction. Therefore, {\emma the claim (\ref{E:mindet}) is proved.}
		
		We {\emma now} follow the notation from Definition \ref{Def:gt} and Lemma \ref{lem:D2}. Suppose there exists an undetermined element. 
		Since $x=\min(A_1)$, we have $x<m_1$. Note that $$r_z\mid \gcd(m_1,|\alpha|-m_1,r+m_1)\mid (m-1),$$ in particular, $r_z\leq m-1$. Since $g_z(u)=0$ for $|\alpha|-m+1\leq u\leq |\alpha|$ (i.e., $m$ consecutive zeros), and $g_z$ is $r_z$-periodic on $[1,m_1+r-1]$, we obtain $g_z(u)\neq 0$ only if $r_z\mid u$. {\emma In particular}, $r_z\mid x$ {\emma since} $x\in A_1$. But Lemma \ref{lem:D2} implies that $x$ is of even type, a contradiction. {\emma Hence} there can be no undetermined element. This completes the proof.
		
	\end{proof}
	

	\section{When two ribbons are of unequal sizes}\label{S:4}
	Throughout Section~6, $\F=\alpha\boxdot\beta$ denotes an $m$-regular thickened ribbon with exactly one $2\times m$ block and $|\alpha|\ne|\beta|$. {\emma Following} Definition \ref{Def:a<b} we orient the two ribbon pieces so that $|\alpha|<|\beta|$, and {\emma call such a thickened ribbon $\F$ \emph{canonical}}. This canonical hypothesis remains in {\emma effect} unless explicitly stated otherwise.\vspace{2mm}
	
	The proof is divided into the following steps:
	\begin{itemize}
		\item The sufficient condition is established in the same way as in the case $|\alpha|= |\beta|$.
		\item For the necessary condition, we distinguish two cases:
		\begin{itemize}
			\item if $\F=\CMcal{S}\circ_{m-1}\T$ is a nontrivial factorization, then when $\T=\T^*$, we prove that $[\F]=\{\F,\F^*\}$ (see Theorem \ref{thm:a<b} (1)--(a)). 
			If, on the other hand, $\T\ne\T^*$, we show that exactly four thickened ribbons are equivalent to $\F$ (see Theorem \ref{thm:a<b} (1)--(b));
			\item otherwise, when $\F$ only has a trivial factorization, we prove that $[\F]=\{\F,\F^*\}$ (see Theorem \ref{thm:a<b} (2)).
		\end{itemize}
	\end{itemize}

	For $|\alpha|\ne |\beta|$, canonical thickened ribbons {\emma are defined} differently (compare Definitions \ref{Def:can1} and \ref{Def:a<b}), but the {\emma notion} of determined elements {\emma remains unchanged} (see Definition \ref{Def:det}).
	
	\begin{definition}\label{Def:a<b}
		We say that $\F=\alpha\boxdot\beta$ is \emph{canonical} if $|\alpha|<|\beta|$.
	\end{definition}
	The starting point of the proof is again Lemma \ref{lem:canonical2}. It is important to note that Lemma \ref{lem:canonical2} {\emma does not hold} if $\E$ is not canonical. For example, let $\F=312\boxdot 31412$ and $\E=31412\boxdot 312$, then $\bh_{\F}=\bh_{\E}$ but $\F\ne \E$.

	
	As a result of $\F\sim\F^*$ and Corollary \ref{lem:basic}, it is sufficient to characterize the equivalence class of $\F$ among canonical thickened ribbons consisting of ribbons of sizes $|\alpha|$ and $|\beta|$. 

	\subsection{Invariants under equivalence}
	{\emma In contrast to the equal-size case}, 
	the unequal-size case introduces four {\emma critical positions $|\alpha|-m+1$, $|\alpha|$, $|\beta|-m+1$, and $|\beta|$, where} cancellation can {\emma alter the} type. Away from {\emma these} positions, {\emma the} type remains invariant; at each critical position, the additional hypothesis on $\bh_\F(m-1)$ restores invariance. This subsection records {\emma precisely} what survives equivalence and {\emma organizes} the resulting behavior into four classes $\mathcal{A}$ to $\mathcal{D}$.
	
	Recall that $r=\gcd(|\alpha|-m+1,|\beta|-m+1)$.
	Corollary \ref{lem:basic} guarantees that $|\alpha|,|\beta|,r$ are invariant under equivalence. 


	
	
	
	

	The lemma below discusses the type of elements in the equivalence class of $\F$. Note that the cancellation of contributions from coarsening compositions could change the type of elements. 
	As a case in point, let $\F=122\boxdot 3$ and $\E=12\boxdot 32$ (see Figure \ref{F:e2}). By Definition \ref{Def:type}, we have $4\in A_1$ in $\F$, whereas $4\in A_0$ in $\E$. This kind of scenario is excluded from Lemma \ref{lem:types}.
	
	\begin{lemma}\label{lem:types}
		Suppose that $\E\sim \F=\alpha\boxdot \beta$. Then:
		\begin{enumerate}
			\item for $x\notin\{|\alpha|-m+1,|\alpha|,|\beta|-m+1,|\beta|\}$, the element $x$ is of type $i$ in $\E$ if and only if it is of type $i$ in $\F$;
			\item for $x\in\{|\alpha|-m+1,|\alpha|,|\beta|-m+1,|\beta|\}$, if $\F$ and $\E$ are canonical and $\bh_\F(m-1)=\bh_\E(m-1)$, then $\bh_\F(x)=\bh_\E(x)$. In particular, the type of $x$ in $\F$ and $\E$ is the same.
		\end{enumerate}	
	\end{lemma}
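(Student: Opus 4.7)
The plan is to mimic Lemma \ref{lem:types2} by studying the coefficient $[h_\lambda]s_{\D}$ for the partition $\lambda=\lambda(x,n-m+1-x,m-1)$. The new difficulty in the $|\alpha|\ne|\beta|$ setting is that at the four boundary values the set $\{(\CMcal{S};0)\succeq(\D):\lambda(\CMcal{S})=\lambda\}$ need not be empty, so the count of $(\gamma;1)$-contributions alone no longer gives $[h_\lambda]s_{\D}$ up to sign, as it does in Lemma \ref{lem:types2}.

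For part (1), any length-three $\CMcal{S}\succeq(\D)$ has the shape $|\alpha|\boxdot s_2 s_3$ or $s_1 s_2\boxdot|\beta|$, so the multi-set of its parts must contain $|\alpha|$ or $|\beta|$. The hypothesis on $x$ excludes $|\alpha|,|\beta|$ from $\{x,n-m+1-x,m-1\}$, so this set is empty just as in Lemma \ref{lem:types2}. Then $|[h_\lambda]s_{\D}|$ equals the type of $x$, and $\ell(\D)=\ell(\E)$ by Corollary \ref{lem:basic} shows the type is preserved under equivalence.

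For part (2), I would handle the boundary values individually. A direct check of the two pre-box-dot-merge split shapes $(s,t)\in\{(1,2),(2,1)\}$ shows that for $x\in\{|\alpha|,|\alpha|-m+1\}$ the pair $\gamma=(x,n-m+1-x)$ is never realizable from $\D$, giving $\bh_{\D}(x)=-$ unconditionally and equality with $\bh_{\E}(x)$ for free. For $x=|\beta|$ with $\lambda=\lambda(|\beta|,|\alpha|-m+1,m-1)$, the only admissible $\CMcal{S}$ is $(m-1)(|\alpha|-m+1)\boxdot|\beta|$ (legal iff $\bh_{\D}(m-1)=+$) and the only admissible $\gamma$ is $(|\beta|,|\alpha|-m+1)$ (present iff $\bh_{\D}(|\beta|)=+$). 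A signed count via (\ref{eq:JT}) then yields $(-1)^{\ell(\D)}[h_\lambda]s_{\D}=a-b$, where $a,b\in\{0,1\}$ are the indicators of $\bh_{\D}(|\beta|)=+$ and $\bh_{\D}(m-1)=+$ respectively, so the hypothesis $\bh_{\D}(m-1)=\bh_{\E}(m-1)$ forces $\bh_{\D}(|\beta|)=\bh_{\E}(|\beta|)$. An analogous calculation with $\lambda=\lambda(|\alpha|,|\beta|-m+1,m-1)$ expresses $(-1)^{\ell(\D)}[h_\lambda]s_{\D}$ as the indicator of $\bh_{\D}(|\beta|-m+1)=+$ minus the indicator of $\bh_{\D}(n-2m+2)=+$.

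The main obstacle is then to control $\bh_{\D}(n-2m+2)$. Generically $n-2m+2$ lies outside the four boundary values, so part (1) applied to the pair $\{n-2m+2,m-1\}$ together with the hypothesis $\bh_{\D}(m-1)=\bh_{\E}(m-1)$ pins down the value. The degenerate coincidences reduce to already-treated cases: $n-2m+2=|\alpha|$ forces $|\beta|=2m-2$ and the value is identically $-$; $n-2m+2=|\beta|$ forces $|\alpha|=2m-2$ and the preceding subcase $x=|\beta|$ applies; while $n-2m+2\in\{|\alpha|-m+1,|\beta|-m+1\}$ is impossible under $|\alpha|,|\beta|\ge m$. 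After these reductions the argument closes.
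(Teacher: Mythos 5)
Your proposal is correct and follows essentially the same route as the paper: part (1) via the partition $\lambda(x,n-m+1-x,m-1)$, then part (2) via the two partitions $\lambda(|\beta|,|\alpha|-m+1,m-1)$ and $\lambda(|\beta|-m+1,|\alpha|,m-1)$. The only superfluous detour is your case analysis on whether $n-2m+2$ hits a boundary value; the paper avoids this entirely by applying part (1) to $m-1$ (which, since $m-1<m\le|\alpha|,|\beta|$, is never a boundary value), so that the preserved type of $m-1$ together with the hypothesis $\bh_\D(m-1)=\bh_\E(m-1)$ immediately yields $\bh_\D(n-2m+2)=\bh_\E(n-2m+2)$ regardless of where $n-2m+2$ lands.
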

	\begin{proof}
		Let $n=|\alpha|+|\beta|$, we first prove $(1)$. Fix  $x\notin\{|\alpha|-m+1,|\alpha|,|\beta|-m+1,|\beta|\}$, consider the partition $\lambda=\lambda(n-x-m+1,x,m-1)$ and let $d=|[h_\lambda]s_\F|$. We have $\{(\CMcal{S};0)\succeq(\F):\lambda(\CMcal{S})=\lambda\}=\varnothing$ so that $d=\left|\{(\gamma;1)\succeq(\F):\lambda(\gamma)=\lambda(n-x-m+1,x)\}\right|$. Consequently,
		{\emma \begin{align*}
			x=(n-m+1)/2 \iff x\in A_{2d}\quad \mbox{ and }\quad x\ne (n-m+1)/2 \iff x\in A_{d}.
		\end{align*}}
	   Since $|[h_\lambda]s_\E|=|[h_\lambda]s_\F|$, the type of $x$ in $\E$ must be the same as the type of $x$ in $\F$.
		
		We next establish $(2)$. Since $\F,\E$ are canonical, it is clear that 
		$$\bh_\F(x)=\bh_\E(x)=-$$ 
		for $x\in \{|\alpha|-m+1,|\alpha|\}$. Combined with (1), {\emma this implies} that the {\emma type} of $m-1$ is the same in $\F$ and $\E$. {\emma It remains} to prove that $\bh_\F(x)=\bh_\E(x)$ for $x\in \{|\beta|-m+1,|\beta|\}$. Since $\bh_\F(m-1)=\bh_\E(m-1)$, we must have $\bh_\F(n-2m+2)=\bh_\E(n-2m+2)$.
		Let $\lambda=(|\beta|,|\alpha|-m+1,m-1)$. Then 
		{\emma \begin{align*}
			\bh_\F(m-1)=\bh_\F(|\beta|) \iff [h_\lambda]s_\F=0,
		\end{align*}
		which gives $\bh_\F(|\beta|)=\bh_\E(|\beta|)$.}
		
		Similarly, let $\lambda=(|\beta|-m+1,|\alpha|,m-1)$. Then 
       \begin{align*}
	{\emma 	\bh_\F(n-2m+2)=\bh_\F(|\beta|-m+1) \iff [h_\lambda]s_\F=0,}
	\end{align*}
	leading to $\bh_\F(|\beta|-m+1)=\bh_\E(|\beta|-m+1)$. For $i\in\{|\beta|-m+1,|\beta|\}$, since $i$ is of type $1$ in $\F$ if and only if $\bh_{\F}(i)=+$, the types of $x$ in $\F$ and $\E$ agree for all $x\in\{|\alpha|-m+1,|\alpha|,|\beta|-m+1,|\beta|\}$. This completes the proof.
	\end{proof}
	\begin{example}\label{Example:11}
		Let $\F=312\boxdot 31412$ and $\E=213\boxdot 21413$ be two canonical thickened ribbons. The types and the $\bh$-functions are given as below:
		\begin{center}
			\begin{tabular}{c|ccccccccccccccc}
				$x$ & $1$ & $2$ & $3$ & $4$ & $5$ & $6$ & $7$ & $8$ & $9$ & $10$ & $11$ & $12$ & $13$ & $14$ & $15$ \\
					\hline \\[-1.5ex]
				{type} & $0$ & $1$ & $2$ & $1$ & $0$ & $0$ & $1$ & $2$ & $1$ & $0$ & $0$ & $1$ & $2$ & $1$ & $0$ \\
				$\bh_{\F}(x)$ & $-$ & $-$ & $+$ & $+$ & $-$ & $-$ & $-$ & $+$ & $+$ & $-$ & $-$ & $-$ & $+$ & $+$ & $-$ \\
				$\bh_{\E}(x)$ & $-$ & $+$ & $+$ & $-$ & $-$ & $-$ & $+$ & $+$ & $-$ & $-$ & $-$ & $+$ & $+$ & $-$ & $-$ \\
			\end{tabular}
		\end{center}
        \vspace{2mm}	
		For $x\in \{5,6,10,11\}$, we have $\bh_{\F}(x)=\bh_{\E}(x)$, and the types of any element in $\F$ and $\E$ are the same, as asserted by Lemma  \ref{lem:types} (2).
	\end{example}

	We next examine the types of elements in each equivalence class $[i]$ in order to identify undetermined elements and establish relations {\emma among} them. 
	
	\begin{definition}\label{Def:7}
		We partition the equivalence classes $\{[i]:1\leq i<r\,\mbox{ and }\, i\ne m-1\}$ into four classes.
		\begin{enumerate}
			\item If $[i]\subseteq A_1$ and for all $j\in[i]$ with $1\leq j\leq|\alpha|+|\beta|-m-r$, we have $\bh_\F(j)=\bh_\F(j+r)$, then we say $[i]\in\mathcal{A}$.
			\item If $[i]\cap A_1\neq\varnothing$ and for some $j\in[i]$ with $1\leq j\leq|\alpha|+|\beta|-m-r$, we have $\bh_\F(j)\neq \bh_\F(j+r)$, then we say $[i]\in\mathcal{B}$.
			\item If $[i]\subseteq A_0$ or $[i]\subseteq A_2$, then we say $[i]\in\mathcal{C}$.
			\item If $[i]\subseteq A_0\cup A_2$, $[i]\cap A_0\neq\varnothing$ and $[i]\cap A_2\neq\varnothing$, then we say $[i]\in\mathcal{D}$.
		\end{enumerate}
		
		If $m-1\in A_0$, then $[m-1]$ belongs to one of the classes as defined above. 
		
		If $m-1\in A_1$, then $[m-1]\in\mathcal{A}$ if and only if the following conditions hold:
		\begin{enumerate}[label=(\roman*)]
			\item $\bh_\F(j)=\bh_\F(m-1)$ for all $j\ne |\alpha|$ with $j\equiv m-1\mod r$;
			\item $\bh_\F(j)=\bh_\F(|\alpha|+|\beta|-2m+2)$ for all $j\ne |\alpha|-m+1$ with $ j\equiv 0\mod r$.
		\end{enumerate}
		Otherwise, we say $[m-1]\in\mathcal{B}$. 
		
		For $m-1\in A_2$, if $$\left( [m-1]\cap A_1\right)\setminus \{|\alpha|-m+1,|\alpha|,|\beta|-m+1,|\beta|\}\neq\varnothing,$$ then $[m-1]\in\mathcal{B}$. Otherwise,  $[m-1]\in\mathcal{C}$ if both (i) and (ii) hold; and $[m-1]\in\mathcal{D}$ if one of (i) and (ii) {\emma fails}.
	\end{definition}
	\begin{example}
		Let $\F=312\boxdot 31412$, where the types and $\bh_{\F}$ are shown in Example \ref{Example:11}. Here $m=2$ and $r=\gcd(6-1,11-1)=5$. Since $[1]=\{1,5,6,11\}\subseteq A_0$, we have $[1]\in \mathcal{C}$ by Definition \ref{Def:7} (3). Since $$[2]=\{2,4,7,9,12,14\}\subseteq A_1$$ and the corresponding $\bh_{\F}(x)$ is $\{-,+,-,+,-,+\}$, thus $[2]\in\mathcal{A}$ by Definition \ref{Def:7} (1). 
		
		In fact, all equivalence classes $[i]$ in $\F$ lie in $\mathcal{A}$ or $\mathcal{C}$. This is not a coincidence, as we {\emma will prove} in Lemma \ref{lem:period}.
	\end{example}

	{\emma The class $[m-1]$ requires separate consideration, since $\bh_{\F}(|\alpha|)=\bh_{\F}(|\alpha| - m+1)=-$ always holds. The crucial observation is that if $[i]\cap A_1\neq\varnothing$ and $\bh_\F(j)=\bh_\F(j+r)$ for all $j\in[i]$, then $[i]\subseteq A_1$. We state the condition $[i]\subseteq A_1$ in (1) and justify it briefly in Remark \ref{re:3}.}

	\begin{remark}\label{re:3}
		{\emma Suppose} $[i]\cap A_1\neq\varnothing$, and choose $i\in [i]\cap A_1$, so that $\bh_{\F}(i)=-\bh_{\F}(n-m+1-i)$, where $n=|\alpha|+|\beta|$. We will show that $j\in A_1$ for every $j\in [i]$. By Lemma \ref{lem:symmetry}, either 
		$$j\equiv i\mod r\quad \mbox{ or }\quad j\equiv m-1-i\mod r.$$ 
		{\emma For} the former case, {\emma the periodicity of $\bh_{\F}$} gives
		$$\bh_{\F}(j)=\bh_{\F}(i)=-\bh_{\F}(n-m+1-i)=-\bh_{\F}(n-m+1-j),$$ 
		hence $j\in A_1$. {\emma For} the latter case, we have $$j\equiv n-m+1-i\mod r$$ and {\emma therefore} $$\bh_{\F}(j)=\bh_{\F}(n-m+1-i)=-\bh_{\F}(i)=-\bh_{\F}(n-m+1-j),$$ {\emma again yielding} $j\in A_1$. Thus every $j\in [i]$ is of type $1$. 
	\end{remark}
	
	\begin{remark}\label{remark:1}
		If $r\vert (m-1)$ and $m-1\in A_1$, then $[m-1]\in\mathcal{B}$. {\emma Indeed, if} $[m-1]\in\mathcal{A}$, then since $m-1\in A_1$, we would have $\bh_\F(m-1)=\bh_\F(|\alpha|+|\beta|-2m+2)$, which is a contradiction.
	\end{remark}
	
	
	The lemma below {\emma shows} that the classification of equivalence classes $[i]$ is well-defined, in the sense that it is {\emma consistent across} all members of the equivalence class $[\F]$.
	
	\begin{lemma}\label{lem:class}
		The class to which $[i]$ belongs is invariant under equivalence of thickened ribbons. Moreover, if $[i]\in\mathcal{B}$, then there exists some $j\in[i]\setminus\{|\alpha|-m+1,|\alpha|\}$ that is determined.
	\end{lemma}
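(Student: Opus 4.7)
The plan is to detect the class membership of $[i]$ and the determined status of its elements by reading off specific coefficients of $s_\D$ in the $h$-basis, exploiting the fact that such coefficients are invariant under $\sim$.

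First I would invoke Lemma \ref{lem:types}(1), which guarantees that the type of any $j\in\{1,\dots,n-m\}\setminus\{|\alpha|-m+1,|\alpha|,|\beta|-m+1,|\beta|\}$ is preserved by $\sim$ (with $n=|\alpha|+|\beta|$). Since Lemma \ref{lem:symmetry} describes $[i]$ (for $i<r$, $i\neq m-1$) as a union of two residue classes modulo $r$, the class $[i]$ always contains representatives $j$ with $j$ and $j+r$ both outside this four-element special set. Inspecting the type of such a generic $j$ already decides whether $[i]\in\mathcal{A}\cup\mathcal{B}$ or $[i]\in\mathcal{C}\cup\mathcal{D}$; scanning representatives from both residues separates $\mathcal{C}$ (uniformly type $0$ or uniformly type $2$) from $\mathcal{D}$ (both present). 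The class $[m-1]$ requires a dedicated argument, because $|\alpha|-m+1$ and $|\alpha|$ are forced into $A_0$ regardless of $\D$; here I would first pin down $\bh_\D(m-1)$ via a diagnostic $h_\lambda$-coefficient, and then invoke Lemma \ref{lem:types}(2) to transport the sign information to the remaining special values.

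To separate $\mathcal{A}$ from $\mathcal{B}$, I would pick $j,j+r\in[i]\cap A_1$ both outside the special set and consider $\lambda=\lambda(j,n-m+1-j,j+r,n-m+1-j-r)$. By Proposition \ref{prop:JT} the coefficient $|[h_\lambda]s_\D|$ is a signed count of coarsenings of $\D$ producing shape $\lambda$ together with coarsenings producing shape $\lambda\cup\{m-1\}$ from the $h_{m-1}$-piece of (\ref{eq:JT}); a direct enumeration shows that this count distinguishes $\bh_\D(j)=\bh_\D(j+r)$ from $\bh_\D(j)\neq\bh_\D(j+r)$. Since the coefficient is an invariant of the equivalence class, the dichotomy $\mathcal{A}$ versus $\mathcal{B}$ is detected by $s_\D$. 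The \emph{moreover} claim then falls out of the same calculation: combined with the invariance of the types of $j$ and $j+r$, the coefficient forces every canonical $\E\sim\D$ to share the sign pair $(\bh_\D(j),\bh_\D(j+r))$, so $j$ is determined and lies in $[i]\setminus\{|\alpha|-m+1,|\alpha|\}$.

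The main obstacle I expect is the bookkeeping caused by Jacobi-Trudi cancellations induced by the single $2\times m$ block, exactly the phenomenon flagged in Example \ref{Eg2}: coarsenings of shape $\lambda$ can silently cancel against coarsenings of shape $\lambda\cup\{m-1\}$ contributed by the second sum in (\ref{eq:JT}). To prevent such spurious collisions one wants $\lambda$ to avoid the entry $m-1$, which is possible generically but fails precisely for representatives $j\in\{m-1,\,n-2m+1\}$ and their symmetric mates; these boundary situations match the tailored treatment of $[m-1]$ in Definition \ref{Def:7} and Remark \ref{remark:1}, and I anticipate them to require separate computations invoking Lemma \ref{lem:types}(2) and a careful comparison of $(\gamma;1)$-coarsenings with $(\CMcal{S};0)$-coarsenings of $\D$.
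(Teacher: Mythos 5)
Your high-level strategy (read off class membership and determined-ness from $h$-coefficients, invoke Lemma \ref{lem:types} for type-invariance, and treat $[m-1]$ separately) is the right shape, but the key diagnostic step has a concrete numerical error that makes the whole separation of $\mathcal{A}$ from $\mathcal{B}$ vacuous. You propose the partition $\lambda=\lambda(j,\,n-m+1-j,\,j+r,\,n-m+1-j-r)$, but its degree is
\[
j+(n-m+1-j)+(j+r)+(n-m+1-j-r)=2(n-m+1),
\]
whereas $|\D|=n=|\alpha|+|\beta|$. Since $2(n-m+1)\ne n$ except in a trivial degenerate range, the coefficient $[h_\lambda]s_\D$ is identically zero and carries no information. (You appear to have transplanted the four-part partition $\lambda(i,|\alpha|-i,i,|\alpha|-i)$ from the $|\alpha|=|\beta|$ case, where its degree really is $n=2|\alpha|$, without adjusting for the unequal-sizes setting.) In fact, no degree-$n$ partition can carry all four parts $j$, $n-m+1-j$, $j+r$, $n-m+1-j-r$ simultaneously precisely because they sum to $2(n-m+1)>n$; so the plan of detecting the sign pair $(\bh_\D(j),\bh_\D(j+r))$ in a single coefficient is not just mis-executed but unsalvageable in this direct form.

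The paper sidesteps this by never trying to compare $\bh_\D(j)$ and $\bh_\D(j+r)$ in one coefficient. Instead it uses the two degree-$n$ partitions $\lambda(j,|\alpha|-j,|\beta|)$ and $\lambda(j,|\beta|-j,|\alpha|)$, whose coefficients diagnose the relations $\bh_\D(j)=\bh_\D(|\beta|+j-m+1)$ and $\bh_\D(j)=\bh_\D(|\alpha|+j-m+1)$; these amount to periodicity with periods $|\beta|-m+1$ and $|\alpha|-m+1$, and the $r$-periodicity defining $\mathcal{A}$ versus $\mathcal{B}$ is recovered via the Euclidean algorithm (mirroring Lemma \ref{lem:symmetry}). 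Along the way each test also pins down $\bh_\D(j)$ when the coefficient is $0$ or $2$, which is exactly what yields the \emph{moreover} clause that some $j\in[i]\setminus\{|\alpha|-m+1,|\alpha|\}$ is determined. Your sketch of the $[m-1]$ case gestures in the right direction but is underspecified; the paper needs an explicit four-condition criterion for $[m-1]\in\mathcal{A}$ when $m-1\in A_1$, plus a separate argument ruling out $\{|\alpha|/2,|\beta|/2\}\cap[m-1]\ne\varnothing$ using $r\mid(m-1)$ and Remark \ref{remark:1}. To repair the proposal you would need to replace the four-part $\lambda$ with the paper's pair of three-part partitions (or some other correctly-sized family) and then supply the Euclidean-algorithm glue.
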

	\begin{proof}
		We prove it by distinguishing two cases: 
		\begin{enumerate}[label=(\Roman*)]
			\item $m-1\notin[i]$ or $i=m-1\in A_0\cup A_2$;
			\item $i=m-1\in A_1$.
		\end{enumerate}	
		For case (I): we have either $[i]\cap\{|\alpha|-m+1,|\alpha|,|\beta|-m+1,|\beta|\}=\varnothing$ or $\bh_{\F}(m-1)=\bh_{\E}(m-1)$ for all canonical equivalent thickened ribbons $\F,\E$ with $\F\sim \E$. By Lemma \ref{lem:types}, the type of any $j\in[i]$ is agreed in $[\F]$. Hence, by definition, if $[i]\not\subseteq A_1$, the class of $[i]$ can be distinguished.
		
		If $[i]\subseteq A_1$, we must determine whether $[i]\in\A$ or $[i]\in\mathcal{B}$. To this end, we establish the following assertion:
		\begin{itemize}
			\item  If $[i]\subseteq A_1$, 
			then $[i]\in\mathcal{A}$ if and only if the following hold:
			\begin{enumerate}
				\item For every $1\le  j\le |\alpha|-1$ with $j\in[i]$, let $\lambda=\lambda(j,|\alpha|-j,|\beta|)$. Then $|[h_\lambda]s_\F|=1$.
				\item For every $1\le  j\le |\beta|-1$ and $j\in[i]$, let $\lambda=\lambda(j,|\beta|-j,|\alpha|)$. Then $|[h_\lambda]s_\F|=1$.
			\end{enumerate}
		\end{itemize}
		This assertion shows that $[i]\in \A$ is invariant under equivalence of $\F$.
		
		If $[i]\subseteq A_1$, 
		conditions (1) and (2) above are equivalent to the identities
		\begin{align}\label{E:j1}
			\bh_{\F}(j)&=\bh_{\F}(|\beta|+j-m+1), \mbox{ for }\, 1\le j\le  |\alpha|-1\,\mbox{ and }\, j\in [i]\\
			\label{E:j2}\bh_{\F}(j)&=\bh_{\F}(|\alpha|+j-m+1), \mbox{ for }\, 1\le  j\le |\beta|-1 \,\mbox{ and }\, j\in [i].
		\end{align}
		{\emma It remains} to show that $[i]\in\A$ if and only if (\ref{E:j1})--(\ref{E:j2}) hold. That is, $$\bh_{\F}(j)=\bh_{\F}(j+r)$$ holds for all $1\le j\le |\alpha|+|\beta|-m-r$ if and only if  (\ref{E:j1})--(\ref{E:j2}) hold. 
		
		{\emma Suppose} $\bh_{\F}(j)=\bh_{\F}(j+r)$ is true, then $\bh_{\F}$ is $r$-periodic, thus (\ref{E:j1})--(\ref{E:j2}) must be true. {\emma Conversely, assuming}  (\ref{E:j1}) and (\ref{E:j2}), the same argument as in the proof of Lemma \ref{lem:symmetry} (via the Euclidean algorithm) yields $\bh_{\F}(j)=\bh_{\F}(j+r)$. \vspace{2mm}

		
		An equivalent {\emma formulation gives a criterion} for $[i]\in\mathcal{B}$:
		\begin{itemize}
			\item  If $[i]\subseteq A_1$, 
			then $[i]\in\mathcal{B}$ if and only if one of the following holds:
			\begin{enumerate}
				\item For some $1\le j\le |\alpha|-1$ with $j\in[i]$, let $\lambda=\lambda(j,|\alpha|-j,|\beta|)$. Then $|[h_\lambda]s_\F|=0$ or $2$. {\emma Specifically,}
				\begin{align*}
					\bh_\F(j)=+ \iff |[h_\lambda]s_\F|=2\quad \mbox{ and }\quad 
					\bh_\F(j)=- \iff |[h_\lambda]s_\F|=0.
				\end{align*}
				\item For some $1\le j\le |\beta|-1$ and $j\in[i]$, let $\lambda=\lambda(j,|\beta|-j,|\alpha|)$, we have $|[h_\lambda]s_\F|=0$ or $2$. {\emma Specifically,}
					\begin{align*}
					\bh_\F(|\alpha|-m+1+j)&=+ \iff |[h_\lambda]s_\F|=2,\\
					\bh_\F(|\alpha|-m+1+j)&=- \iff |[h_\lambda]s_\F|=0.
				\end{align*}
			\end{enumerate}
		\end{itemize}
		In both cases, there exists some $j\in[i]$ that is determined.
		
		For case (II): $i=m-1\in A_1$. If $\{|\alpha|/2,|\beta|/2\}\cap[m-1]\neq\varnothing$, we claim that $r\vert (m-1)$ and $[m-1]\in\mathcal{B}$. Indeed, assuming $|\alpha|/2\in [m-1]$, Lemma \ref{lem:symmetry} gives either $|\alpha|/2\equiv 0 \mod r$ or $|\alpha|/2\equiv m-1 \mod r$, which implies $r\mid \gcd(|\alpha|/2,|\alpha|-m+1)\mid (m-1)$ or  $r\mid \gcd(|\alpha|/2-m+1,|\alpha|-m+1)\mid (m-1)$. That is, $r\vert (m-1)$ and we know that $[m-1]\in\mathcal{B}$ by Remark \ref{remark:1}.
		
		Otherwise, $|\alpha|/2\notin[m-1]$ and $|\beta|/2\notin[m-1]$. Then  $[m-1]\in\mathcal{A}$ if and only if the following conditions {\emma hold}:
		\begin{enumerate}
			\item For any $j\in[m-1]\setminus\{|\alpha|-m+1,|\alpha|,|\beta|-m+1,|\beta|\}$, we have $j\in A_1$.
			\item For any $m\le j\le |\alpha|-m$ and $j\in[m-1]$, let $\lambda=\lambda(j,|\alpha|-j,|\beta|)$. Then $|[h_\lambda]s_\F|=1$.
			\item For any $m\le j\le |\beta|-m$ and $j\in[m-1]
			$, let $\lambda=\lambda(j,|\beta|-j,|\alpha|)$. Then $|[h_\lambda]s_\F|=1$.
			\item Let $\lambda=(|\beta|,|\alpha|-m+1,m-1)$ and $\mu=(|\beta|-m+1,|\alpha|,m-1)$. Then $[h_\lambda]s_\F=[h_\mu]s_\F=0$.
		\end{enumerate}
		Again, this implies that $[i]\in \A$ is invariant {\emma under} equivalence.
		
		We now {\emma justify} the claim. Condition (4) {\emma is equivalent to}
		\begin{align*}
			\bh_{\F}(m-1)=\bh_{\F}(|\beta|)\quad\mbox{ and }\quad \bh_{\F}(|\alpha|+|\beta|-2m+2)=\bh_{\F}(|\beta|-m+1).
		\end{align*}
		Conditions (1)-(3) hold if and only if (\ref{E:j1}) and (\ref{E:j2}) are true for $m\le j\le |\alpha|-m$ and $m\le j\le |\beta|-m$, {\emma excluding} $|\alpha|-m+1,|\alpha|$, respectively. {\emma Equivalently}, conditions (1)--(4) are equivalent to (\ref{E:j1}) for $m-1\le j\le |\alpha|-m$ and (\ref{E:j2}) for $m\le j\le |\beta|-m+1$ with $j\not\in\{|\alpha|-m+1,|\alpha|\}$. By the Euclidean algorithm, this is interchangeable with 
		\begin{align*}
			\bh_\F(j)&=\bh_\F(m-1)\,\,\mbox{ for all }\, j\equiv m-1\!\!\mod r\,\mbox{ and }\,j\ne |\alpha|,\\
			\bh_\F(j)&=\bh_\F(|\alpha|+|\beta|-2m+2)\,\mbox{ for all }\, j\equiv 0\mod r \,\mbox{ and }\,j\ne |\alpha|-m+1,
		\end{align*}
		{\emma as desired}. This also yields a criterion for $[i]\in\mathcal{B}$ in this case:
		\begin{itemize}
			\item If $i=m-1\in A_1$, then $[i]\in\mathcal{B}$ if and only if one of the following is true.
			\begin{enumerate}
				\item For some $j\in[m-1]\setminus\{|\alpha|-m+1,|\alpha|,|\beta|-m+1,|\beta|\}$, we have $j\in A_0\cup A_2$. 
				\item For some $m\le j\le |\alpha|-m$, let $\lambda=\lambda(j,|\alpha|-j,|\beta|)$, we have $|[h_\lambda]s_\F|=0$ or $2$. That is, 
				\begin{align*}
					\bh_\F(j)&=+ \iff |[h_\lambda]s_\F|=2,\\
					\bh_\F(j)&=- \iff |[h_\lambda]s_\F|=0.
				\end{align*}
				\item For some $m\le j\le |\beta|-m$, let $\lambda=\lambda(j,|\beta|-j,|\alpha|)$, we have $|[h_\lambda]s_\F|=0$ or $2$. That is,
				\begin{align*}
					\bh_\F(|\alpha|-m+1+j)&=+ \iff |[h_\lambda]s_\F|=2,\\
					\bh_\F(|\alpha|-m+1+j)&=- \iff |[h_\lambda]s_\F|=0.
				\end{align*}
				\item Let $\lambda=(|\beta|,|\alpha|-m+1,m-1)$ and $\mu=(|\beta|-m+1,|\alpha|,m-1)$, then at least one of $|[h_\lambda]s_\F|$ and $|[h_\mu]s_\F|$ equals $1$. {\emma Specifically, if $|[h_\lambda]s_\F|=1$, then} 
				\begin{align*}
					\bh_\F(m-1)&=+ \iff [h_\lambda]s_\F=(-1)^{\ell(\F)-1},\\
					\bh_\F(m-1)&=- \iff [h_\lambda]s_\F=(-1)^{\ell(\F)}.
				\end{align*}
				If $|[h_\mu]s_\F|=1$, then
			    \begin{align*}
			    	\bh_\F(m-1)&=+ \iff [h_\lambda]s_\F=(-1)^{\ell(\F)},\\
			    	\bh_\F(m-1)&=- \iff [h_\lambda]s_\F=(-1)^{\ell(\F)-1}.
			    \end{align*}
			\end{enumerate}
		\end{itemize}
		It follows that if $[i]\in\mathcal{B}$, then there is some $j\in[i]\setminus\{|\alpha|-m+1,|\alpha|\}$ that is determined. This completes the proof.
	\end{proof}
	\subsection{Main result of Section \ref{S:4}}
	The necessity of Theorem \ref{T:1} for {\emma the unequal-size case} is established by the following theorem, which is the main result of this section.
	\begin{theorem}\label{thm:a<b}
		Suppose that $\F$ is canonical. 
		\begin{enumerate}
			\item
			If $\mathcal{B}=\mathcal{D}=\varnothing$, then $\F$ has a nontrivial factorization, say $\F=\CMcal{S}\circ_{m-1}\T$. 
			\begin{enumerate}
				\item
				If $\T=\T^*$, then the unique canonical thickened ribbon in the equivalence class of $\F$ is $\F$ itself.
				\item Otherwise, $\T\ne \T^*$, then the canonical thickened ribbons in the equivalence class of $\F$ are $\F$ and $\CMcal{S}\circ_{m-1}\T^*$.
			\end{enumerate}
			\item
			If $\mathcal{B}\cup\mathcal{D}\ne \varnothing$, then the only canonical thickened ribbon in the equivalence class of $\F$ is $\F$ itself. 
		\end{enumerate} 
	\end{theorem}
	
	We first {\emma illustrate how Theorem \ref{thm:a<b}  proves} Theorem \ref{T:1} in the case $\F=\alpha\boxdot \beta$ with $|\alpha|\ne |\beta|$, and then proceed to prove Theorem \ref{thm:a<b} itself.
	
	{\em Proof of Theorem \ref{T:1} for $\F=\alpha\boxdot \beta$ with $|\alpha|\ne |\beta|$}. It remains to show the necessity. If $\F=\CMcal{S}\circ_{m-1}\T$, we observe that $|\alpha|\neq|\beta|$ if and only if $\CMcal{S}\neq\CMcal{S}^*$. Otherwise, $\F=\CMcal{S}\circ\T$ with $\CMcal{S}=1$ and $\T=\F$. Since exactly one of $\F$ and $\F^*$ is canonical, Theorem \ref{thm:a<b} implies that the equivalence class of $\F$ has exactly $2^{\kappa}$ elements, where $\kappa$ equals the number of occurrences of $\CMcal{S}\ne \CMcal{S}^*$ and $\T\ne \T^*$, as desired.
	
	\qed
	\begin{remark}
		The concept of {\em irreducible} factorization appears in Conjecture \ref{Conj:1}. For the special case of $m$-regular thickened ribbons with only {\emma a single} $2\times m$ block, any nontrivial factorization of $\F$, if {\emma it} exists, must be of the form $\CMcal{S}\circ_{m-1}\T$ by Theorem \ref{T:1}. {\emma Moreover, such a factorization} yields the same number of occurrences of $\CMcal{S}\ne \CMcal{S}^*$ and $\T\ne \T^*$, regardless of irreducibility. It is indeed for this reason that we do not introduce irreducible factorizations. 
	\end{remark}
	
	
	\subsection{Proof of Theorem \ref{thm:a<b} (1) and (a)}\label{subsection:6.3}
	
	This subsection proves the {\emma non-trivial} factorization criterion {\emma for the subcase $\T=\T^*$}. The absence of $\mathcal B$ and $\mathcal D$ is equivalent to {\emma the} periodicity of $\bh_\F$; one period of $\bh_{\F}$ reconstructs the inner ribbon $\T$, while the quotient lengths determine the two-row outer ribbon $\CMcal{S}$.
	
	The statement (1) of Theorem \ref{thm:a<b} follows from a relation between nontrivial factorizations of $\F$ and equivalence classes of integers in Lemma \ref{lem:period}.
	\begin{lemma}\label{lem:period}
		An $m$-regular thickened ribbon $\F=\alpha\boxdot \beta$ with exactly one $2\times m$ block admits a nontrivial decomposition, i.e., $$\F=\CMcal{S}\circ_\W \T$$ with $\T\ne 1$, $\W=m-1$, and $\CMcal{S}=(p,q)$ with $p<q$, if and only if $\mathcal{B}=\mathcal{D}=\varnothing$.
	\end{lemma}
	\begin{proof}
		First note that $\mathcal{B}=\mathcal{D}=\varnothing$ if and only if $\bh_\F$ is periodic with period $r$,  except that $\bh_\F(|\alpha|-m+1)=\bh_\F(|\alpha|)=-$. 
		
		If $\F=\CMcal{S}\circ_{m-1} \T$ with $\CMcal{S}\ne 1$ and $\T\ne 1$, then since $\F$ contains exactly one $2\times m$ block, we must have $\CMcal{S}=(p,q)$, and $\bh_\F$ is periodic with period $|\T|-m+1$,  except at $|\alpha|-m+1$ and $|\alpha|$. {\emma Moreover}, 
		\begin{align*}
			|\T|=\frac{|\alpha|-m+1}{p}+m-1=\frac{|\beta|-m+1}{q}+m-1,
		\end{align*}
		which implies {\emma that} $|\T|-m+1$ divides $r$. {\emma Hence} $\bh_\F$ is periodic with period $r$, except at $|\alpha|-m+1$ and $|\alpha|$.
		
		{\emma Conversely}, suppose $\bh_\F$ is periodic with period $r$, except at $|\alpha|-m+1$ and $|\alpha|$. Let $\CMcal{M}=\{1\le x\le r-1:\bh_{\F}(x)=+\}$, then $\mathrm{Comp}(\CMcal{M})\vDash r$. Let $n=|\alpha|+|\beta|$. {\emma The ribbon} $\T\vDash (r+m-1)$ is constructed from $\mathrm{Comp}(\CMcal{M})$ as follows:
		\begin{itemize}
			\item If $\bh_{\F}(n-2m+2)=-$, then $\F$ {\emma is of} type $\W\rightarrow \O\rightarrow \W$ or $\W\uparrow \O\rightarrow \W$, and we take $\T$ to be {\emma the} composition obtained by adding $(m-1)$ to the last part of $\mathrm{Comp}(\CMcal{M})$.
			\item If $\bh_{\F}(n-2m+2)=+$, then $\F$ {\emma is of} type $\W\rightarrow \O\uparrow \W$ or $\W\uparrow \O\uparrow \W$, and we take $\T$ to be {\emma the} composition obtained by adding a new part $(m-1)$ at the end of $\mathrm{Comp}(\CMcal{M})$.
		\end{itemize}
		It follows that $$\F=\left((|\alpha|-m+1)/r,(|\beta|-m+1)/r\right)\circ_{m-1}\T$$ is a nontrivial factorization, as desired.
	\end{proof}
	Since $\F=\CMcal{S}\circ_{m-1}\T$ with $\T=\T^*$, all elements $x$ with $1\le x\le |\alpha|-1$ {\emma are of even type}, which implies that $\mathcal{A}=\varnothing$. By Lemma \ref{lem:canonical2} and the lemma below, {\emma it follows} that $\F$ is the unique canonical thickened ribbon in its equivalence class. This proves Theorem \ref{thm:a<b} (1)--(a).
	
	\begin{lemma}\label{lem:sym}
		If $i$ is undetermined, then $[i]\in\mathcal{A}$ and all elements of $[i]\setminus\{|\alpha|-m+1,|\alpha|\}$ are undetermined. In particular, $m-1\le \min[i]\le |\alpha|-m$.
	\end{lemma}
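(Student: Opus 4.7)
The plan is to mimic the structure of Lemma \ref{lem:sym2}, adapted to the richer setting $|\alpha|\neq|\beta|$ where the four exceptional points $\{|\alpha|-m+1,|\alpha|,|\beta|-m+1,|\beta|\}$ break perfect symmetry. I will prove three things in sequence: (a) $[i]\in\mathcal{A}\cup\mathcal{B}$; (b) $[i]\notin\mathcal{B}$, so $[i]\in\mathcal{A}$; (c) every $x\in[i]\setminus\{|\alpha|-m+1,|\alpha|\}$ is undetermined, with the stated bounds on $\min[i]$ following.

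For (a): every $x$ of type $0$ or type $2$ has its sign $\bh_\D(x)$ dictated by its type (namely $-$ on $A_0$ and $+$ on $A_2$). Lemma \ref{lem:types}(1) ensures that the type of $x$ is an invariant of the equivalence class of $\D$ outside the four exceptional points, while Lemma \ref{lem:types}(2) combined with canonicality pins the type at the exceptional points. Hence any class in $\mathcal{C}\cup\mathcal{D}$ consists of determined elements, forcing undetermined $i$ into $\mathcal{A}\cup\mathcal{B}$.

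For (b): assume for contradiction that $[i]\in\mathcal{B}$. By Lemma \ref{lem:class} there is a determined $j\in[i]\setminus\{|\alpha|-m+1,|\alpha|\}$. The strategy is to propagate determinedness from $j$ to $i$ by iterated applications of Lemmas \ref{lem:BTvW4} and \ref{lem:BTvW5} to triples $(x,y,x+y)$ sitting inside $[i]$, using the type-$1$ reflection $x\leftrightarrow n-m+1-x$ to pass between the two residues $i\bmod r$ and $(m-1-i)\bmod r$ composing $[i]$, and the shifts by $|\alpha|-m+1$ and $|\beta|-m+1$ to move within a single residue class. Each step forces another element to be determined, eventually reaching $i$, which is the desired contradiction. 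I expect the \emph{main difficulty} to be the case analysis around the exceptional points $\{|\alpha|-m+1,|\alpha|,|\beta|-m+1,|\beta|,m-1\}$, where Lemmas \ref{lem:BTvW4} and \ref{lem:BTvW5} either do not apply or give weaker conclusions; paralleling the intricate case analysis in the proof of Lemma \ref{lem:sym2}, one has to choose alternative triples to route the argument around these points, and the asymmetry $|\alpha|<|\beta|$ doubles the number of such cases relative to the $|\alpha|=|\beta|$ setting.

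For (c): once $[i]\in\mathcal{A}$ is established, class membership is invariant by Lemma \ref{lem:class}, so for any canonical $\E\sim\D$ the function $\bh_\E$ inherits the defining $r$-periodicity $\bh_\E(j)=\bh_\E(j+r)$ of $\mathcal{A}$ on $[i]\setminus\{|\alpha|-m+1,|\alpha|\}$. Combined with the type-$1$ reflection $\bh_\E(x)=-\bh_\E(n-m+1-x)$, the value of $\bh_\E$ at a single $y\in[i]\setminus\{|\alpha|-m+1,|\alpha|\}$ determines $\bh_\E$ on the whole set $[i]\setminus\{|\alpha|-m+1,|\alpha|\}$. Hence determinedness is a uniform property across this set, and if $i$ is undetermined then so is every $x\in[i]\setminus\{|\alpha|-m+1,|\alpha|\}$.

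For the final bound $m-1\leq\min[i]\leq|\alpha|-m$: since $[i]\in\mathcal{A}$ forces $[i]\subseteq A_1$ outside the two excluded points, any $x<m-1$ must be excluded, since the structural constraints on canonical $\D$ (together with the $r$-periodicity and reflection symmetry just used) place such $x$ in $A_0\cup A_2$, giving $\min[i]\geq m-1$. The upper bound $\min[i]\leq|\alpha|-m$ follows from Lemma \ref{lem:symmetry}, which locates $\min[i]$ in $\{1,\ldots,r\}$, combined with $r\leq|\alpha|-m+1$ and the explicit exclusion of $|\alpha|-m+1$ (which, lying in $A_0\cup A_2$, cannot belong to a class in $\mathcal{A}$).
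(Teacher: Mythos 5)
Your three-part decomposition (a) $[i]\in\mathcal{A}\cup\mathcal{B}$, (b) $[i]\notin\mathcal{B}$, (c) all of $[i]\setminus\{|\alpha|-m+1,|\alpha|\}$ undetermined, plus the bounds, mirrors the paper's structure, and steps (a) and (c) are essentially aligned with the paper's argument. The crux is step (b), and there your plan is unsound. You propose to propagate determinedness from a determined $j\in[i]$ by ``iterated applications of Lemmas \ref{lem:BTvW4} and \ref{lem:BTvW5}''. Those two lemmas belong to Section \ref{S:5} and are stated and proved only for $|\alpha|=|\beta|$, so they are simply not available here. The analogues in the $|\alpha|\ne|\beta|$ setting are Lemmas \ref{lem:BTvW1} and \ref{lem:BTvW2}; but those appear in the \emph{following} subsection, and their proofs explicitly invoke Lemma \ref{lem:sym} (Cases (1) and (2) of Lemma \ref{lem:BTvW1}, and several cases of Lemma \ref{lem:BTvW2}, appeal to ``$[x]\in\mathcal{B}$ implies $x$ determined,'' which is exactly part of Lemma \ref{lem:sym}). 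Using them to prove Lemma \ref{lem:sym} would be circular. The paper instead gives a self-contained argument: it shows directly that each of the three generating relations (1)--(3) of Definition \ref{def:equiv} maps a determined $x\in[j]\setminus\{|\alpha|-m+1,|\alpha|\}$ to a determined $y$, by choosing for each relation a specific partition $\lambda$ — namely $\lambda(x,|\alpha|-x,|\beta|)$ for $y=|\alpha|-x$, and $\lambda(|\alpha|,x-|\alpha|+m-1,n-x-m+1)$ for $y=|\alpha|+n-2m+2-x$ — and reading off $\bh_\D(y)$ from the invariant $d=|[h_\lambda]s_\D|$. No triple lemma is needed. This is the missing idea in your sketch.

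A secondary issue concerns your justification of $\min[i]\ge m-1$. You claim that ``structural constraints place any $x<m-1$ in $A_0\cup A_2$,'' but that is false in general: for instance with $m=3$, $\alpha=(1,3)$, $\beta=(3,3)$, the element $x=1<m-1$ has $\bh_\D(1)=+$ and $\bh_\D(n-m+1-1)=-$, so $1\in A_1$. The paper's actual argument is different: for $1\le z<m-1$, the element $|\alpha|-z$ lies strictly between $|\alpha|-m+1$ and $|\alpha|$, and since both rows adjacent to the box-dot have at least $m$ boxes in every canonical diagram, no coarsening can split at $|\alpha|-z$; hence $\bh_\E(|\alpha|-z)=-$ for every canonical $\E\sim\D$. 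Combined with $\bh_\D(z)=-\bh_\D(|\alpha|-z)$ (which follows from $[z]\in\mathcal{A}\subseteq A_1$ via $r$-periodicity and type-$1$ reflection), this pins $\bh_\D(z)=+$ as a determined value, contradicting $z$ being undetermined. The point is not that $z$ is of even type — it is of type $1$ — but that its sign is forced by the forced sign of its partner $|\alpha|-z$.
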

	\begin{proof}
		Suppose $[i]\in\mathcal{A}$ and there exists some $j\in[i]\setminus\{|\alpha|-m+1,|\alpha|\}$ that is determined. Then, for any $x\in[i]\setminus\{|\alpha|-m+1,|\alpha|\}$, since $[i]\in\mathcal{A}$, we have $$\bh_\F(x)=\bh_\F(j)$$ whenever $x\equiv j\mod r$. On the other hand, because $x\in A_1$, $\bh_\F(x)=-\bh_\F(|\alpha|+|\beta|-m+1-x)$ and consequently $$\bh_\F(x)=-\bh_\F(j)$$ if $x\equiv m-1-j\mod r$. Since $j$ is determined, {\emma this implies that} $x$ is also determined, a contradiction. {\emma Therefore,} if $[i]\in\mathcal{A}$, then all elements of $[i]$ other than $|\alpha|-m+1$ and $|\alpha|$ must be undetermined.
		
		
	    It remains to prove that $[i]\in\mathcal{A}$.
		If $[i]\in\mathcal{C}\cup\mathcal{D}$, then either $i\in A_0\cup A_2$ that must be determined, or $m-1\in A_2$ and $i\in\{|\alpha|-m+1,|\alpha|,|\beta|-m+1,|\beta|\}$ that are also determined. {\emma Hence} $[i]\not\in \mathcal{C}\cup\mathcal{D}$. 
		
		Now suppose $[i]\in\mathcal{B}$. By Lemma \ref{lem:class}, there exists $j\in[i]\setminus\{|\alpha|-m+1,|\alpha|\}$ that is determined. {\emma We will show} that any element of $[j]$ is determined, which contradicts the assumption that $i\in [j]$ is undetermined. Therefore, we must have $[i]\in\mathcal{A}$.
		
		For this purpose, we will show that for every $x\in[j]\backslash \{|\alpha|-m+1,|\alpha|\}$, if $x$ is determined, then so is $y$, where $y$ is equivalent to $x$ via one of the three relations in Definition \ref{def:equiv}:
		\begin{align*}
			y=n-m+1-x, \quad y=|\alpha|-x\quad \mbox{ or }\quad y=|\alpha|+n-2m+2-x,
		\end{align*}
		where $n=|\alpha|+|\beta|$.
		\begin{enumerate}
			\item If $y=n-m+1-x$, then 
			\begin{itemize}
				\item if $x\in A_0\cup A_2$, we have $\bh_{\F}(y)=\bh_{\F}(x)$.
				\item If $x\in\{|\beta|-m+1,|\beta|\}$, then $\bh_{\F}(y)=-$.
				\item  If $x\in A_1$ and $x\not\in \{|\alpha|-m+1,|\alpha|,|\beta|-m+1,|\beta|\}$, we must have $\bh_{\F}(y)=-\bh_{\F}(x)$.   
			\end{itemize}
			\item If $y=|\alpha|-x$, then 
			\begin{itemize}
				\item if $x=m-1$, then $|\alpha|-m+1$ is determined because $\bh_{\F}(|\alpha|-m+1)=-$. 	
				\item If $1<x<|\alpha|-1$ and $x\not\in \{m-1,|\alpha|-m+1\}$, let $\lambda=\lambda(x,|\alpha|-x,|\beta|)$. {\emma Then $y$ is determined by the following table.}
			\end{itemize}
		    {\emma 
		    	\begin{center}
		    		\begin{tabular}{c|cc}
		    			$(\bh_\F(x),\bh_\F(y))$ & $|[h_\lambda]s_\F|$\\
		    			\hline \\[-1.5ex]
		    			$(+,+)$ &  $2$\\
		    			$(+,-)$ or $(-,+)$ & $1$\\
		    			$(-,-)$ &  $0$
		    		\end{tabular}
		    \end{center}}
		    \vspace{2mm}
		
			\item If $y=|\alpha|+n-2m+2-x$, then 
			\begin{itemize}
				\item if $|\alpha|< x<n-m$, let $\lambda=\lambda(|\alpha|,x-|\alpha|+m-1,n-x-m+1)$. {\emma Then $y$ is also determined by the previous table.} 
				\item If $x=n-m$, then $y=|\alpha|-m+2$ is determined as $\bh_{\F}(|\alpha|-m+2)=-$.
			\end{itemize}
			
		\end{enumerate}
		As a consequence, if $[i]\in\mathcal{B}$, then all elements of $[i]\backslash \{|\alpha|-m+1,|\alpha|\}$ are determined. Hence, if $i$ is undetermined, we must have $[i]\in\A$. 
		
		For $1\le z<m-1$, the element $z$ must be determined. Indeed, if $z$ were undetermined, then $[z]\in\mathcal{A}$, but $\bh_{\F}(z)=-\bh_{\F}(|\alpha|-z)=+$ would be determined, a contradiction. {\emma Therefore, the smallest undetermined element must be at least $m-1$.}
		
		 Consequently, if $i=\min[i]$ is undetermined, then $|\alpha|-i$ is also undetermined. In particular, $|\alpha|-i\ge m-1$ and $i\ge m-1$, that is, $m-1\le i\le |\alpha|-m$ since $|\alpha|-m+1$ is determined with $\bh_{\F}(|\alpha|-m+1)=-$.
	\end{proof}
	
	\subsection{Proof of Theorem \ref{thm:a<b} (b)}
	
	This subsection {\emma deals with} the case $\T\ne\T^*$. The next two lemmas are the unequal-size {\emma counterparts} of Lemmas \ref{lem:BTvW4} and \ref{lem:BTvW5}: they {\emma employ} the same coefficient tests away from the four {\emma critical values $|\alpha|-m+1$, $|\alpha|$, $|\beta|-m+1$, and $|\beta|$}, {\emma with} the additional hypotheses {\emma addressing} cancellations at {\emma these values}. The resulting propagation argument {\emma then} shows that reversing the inner factor {\emma yields} the only second canonical representative.
	
	\begin{lemma}\label{lem:BTvW1}
		Suppose $x,y,x+y\in A_1$. Then whether or not $$\bh_{\F}(x)=\bh_{\F}(y)=-\bh_{\F}(x+y)$$ holds can be determined, unless $x+y\in\{|\alpha|-m+1,|\beta|-m+1\}$ or $\{x,y\}\cap\{|\alpha|,|\beta|\}\neq\varnothing$.  
	\end{lemma}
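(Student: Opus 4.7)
I plan to adapt the argument of Lemma~\ref{lem:BTvW4}. Let $n=|\alpha|+|\beta|$ and set $z=n-x-y-m+1$; consider
\[
\lambda=\lambda(x,y,z,m-1), \qquad d=\bigl|[h_\lambda]s_\D\bigr|.
\]
Since $d$ depends only on $s_\D$, it is an invariant of the equivalence class of $\D$. The goal is to establish that $d=0$ precisely when $\bh_\D(x)=\bh_\D(y)=-\bh_\D(x+y)$, and $d=2$ otherwise, so that this sign pattern can be read off from~$d$.

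The technical core, and the main obstacle, is to show that under the stated hypotheses no $0$-coarsening $(\CMcal{S};0)\succeq(\D)$ satisfies $\lambda(\CMcal{S})=\lambda$, which by (\ref{eq:JT}) reduces $[h_\lambda]s_\D$ to a signed sum over $1$-coarsenings $\{(\gamma;1)\succeq(\D):\lambda(\gamma)=\lambda(x,y,z)\}$. A putative $0$-coarsening $\alpha'\boxdot\beta'$ forces $|\alpha|$ to be a submultiset sum of $\{x,y,z,m-1\}$, and I plan to eliminate each possibility in turn. The singletons $\{x\},\{y\}$ are excluded by $\{x,y\}\cap\{|\alpha|\}=\varnothing$; the singleton $\{z\}$ by $x+y\ne|\beta|-m+1$; and $\{m-1\}$ by $|\alpha|\ge m$. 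The size-three subsets dualize to singletons summing to $|\beta|$ and are excluded by $\{x,y\}\cap\{|\beta|\}=\varnothing$ and $x+y\ne|\alpha|-m+1$. The size-two subsets would force one of $x,y,x+y$ to coincide with a boundary value in $\{|\alpha|,|\beta|,|\alpha|-m+1,|\beta|-m+1\}$; for each such candidate, the structural obstructions $\alpha_k\ge m$ and $\beta_1\ge m$---which force $\bh_\D(|\alpha|)=\bh_\D(|\alpha|-m+1)=-$ and prevent $m-1$ from arising as an extreme prefix sum of $\alpha$ or $\beta$---combine with the $A_1$-hypotheses and the explicit exclusions to rule out the corresponding $\alpha'$ or $\beta'$ side of each candidate. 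This last step is where I expect the most delicate work.

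Once $0$-coarsenings are absent, each contributing $1$-coarsening $(\gamma;1)$ with $\ell(\gamma)=3$ enters $[h_\lambda]s_\D$ with the common sign $(-1)^{\ell(\D)+3}$, so $d$ equals the number of orderings $(\gamma_1,\gamma_2,\gamma_3)$ of the multiset $\{x,y,z\}$ such that $\bh_\D(\gamma_1)=\bh_\D(\gamma_1+\gamma_2)=+$. Using the antisymmetries $\bh_\D(y+z)=-\bh_\D(x)$, $\bh_\D(x+z)=-\bh_\D(y)$, and $\bh_\D(z)=-\bh_\D(x+y)$---all consequences of $x,y,x+y\in A_1$---each of the six orderings of $\{x,y,z\}$ translates into a pair of sign conditions on $(a,b,c):=(\bh_\D(x),\bh_\D(y),\bh_\D(x+y))$. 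A direct enumeration of the eight possible triples then shows that $d=0$ precisely when $(a,b,c)\in\{(+,+,-),(-,-,+)\}$---that is, precisely when $\bh_\D(x)=\bh_\D(y)=-\bh_\D(x+y)$---and $d=2$ in the other six cases. Hence the invariant $d$ detects the asserted sign pattern and completes the argument.
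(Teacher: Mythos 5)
Your plan diverges from the paper precisely at the point you flag as delicate, and that step does not go through. The lemma's exceptions exclude $x+y\in\{|\alpha|-m+1,|\beta|-m+1\}$ and $\{x,y\}\cap\{|\alpha|,|\beta|\}\neq\varnothing$, but they do \emph{not} exclude $x+y\in\{|\alpha|,|\beta|\}$ or $\{x,y\}\cap\{|\alpha|-m+1,|\beta|-m+1\}\neq\varnothing$. In these configurations a $0$-coarsening $(\CMcal{S};0)$ with $\lambda(\CMcal{S})=\lambda(x,y,z,m-1)$ \emph{can} occur: your "structural obstructions" only rule out $m-1$ being a prefix sum of $\beta$ or a suffix sum of $\alpha$, whereas for instance $\alpha'=(m-1,\,|\alpha|-m+1)$ is a perfectly legitimate coarsening of $\alpha$ whenever $m-1$ happens to be a prefix sum of $\alpha$ and $|\alpha|\geq 2m-1$. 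A concrete witness: take $m=3$ and $\D=23\boxdot 312$, with $x=y=3=|\alpha|-m+1$ and $x+y=6$; all three lie in $A_1$ and the lemma's exceptions are avoided, but $\CMcal{S}=23\boxdot 33$ is a $0$-coarsening with $\lambda(\CMcal{S})=(3,3,3,2)=\lambda$ while no $1$-coarsening $\gamma$ has $\lambda(\gamma)=(3,3,3)$, so $d=|[h_\lambda]s_\D|=1$, not $0$ or $2$. Yet the sign pattern $\bh_\D(x)=\bh_\D(y)=-\bh_\D(x+y)$ does hold here ($-,-,+$), so the proposed criterion ``$d=0$ iff pattern'' fails. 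The length-$3$ and length-$4$ coarsenings enter \eqref{eq:JT} with opposite signs, so once a $0$-coarsening appears, $d$ is no longer a direct count of compatible $1$-coarsenings.

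The paper circumvents exactly this by splitting into three cases. Only when $\{x,y,x+y\}\cap\{|\alpha|-m+1,|\alpha|,|\beta|-m+1,|\beta|\}=\varnothing$ does it compute $[h_\lambda]s_\D$ and read off the answer as you propose. In the remaining admissible cases ($x+y\in\{|\alpha|,|\beta|\}$ or $\{x,y\}\cap\{|\alpha|-m+1,|\beta|-m+1\}\neq\varnothing$), one of the three quantities is forced (e.g.\ $\bh_\D(|\alpha|)=-$ or $\bh_\D(|\alpha|-m+1)=-$), Lemma~\ref{lem:symmetry} puts the other two in a common class $[x]=[y]$ (resp.\ $[y]=[x+y]$), and the already-established invariance of the classification $\mathcal{A}$ versus $\mathcal{B}$ (Lemmas~\ref{lem:class} and~\ref{lem:sym}) determines the remaining signs without any $[h_\lambda]$ computation. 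To repair your proof you would need to reproduce that structural argument for the size-two splits; a uniform elimination of $0$-coarsenings is not available.
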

	\begin{proof}
		The elements $x,y$ satisfying $x+y\notin\{|\alpha|-m+1,|\beta|-m+1\}$ and $\{x,y\}\cap\{|\alpha|,|\beta|\}=\varnothing$ {\emma fall into} exactly one of the following cases:
		\begin{enumerate}
			\item $x+y\in\{|\alpha|,|\beta|\}$;
			\item $\{x,y\}\cap\{|\alpha|-m+1,|\beta|-m+1\}\neq\varnothing$;
			\item $\{x,y,x+y\}\cap\{|\alpha|-m+1,|\alpha|,|\beta|-m+1,|\beta|\}=\varnothing$.
		\end{enumerate}
		
		Case (1): The element $x+y\in A_1$ must be determined, namely, $\bh_{\F}(|\alpha|)=-\quad\mbox{and}\quad \bh_{\F}(|\beta|)=+$.
		Since $x\equiv m-1-y\mod r$ and $x,y\in A_1$, Lemma \ref{lem:symmetry} gives $[x]=[y]$, and hence $[x]\in\A\cup \mathcal{B}$. If $[x]\in\mathcal{B}$, then both $x$ and $y$ are determined by Lemma \ref{lem:sym}, and we are done.  
		If $[x]\in\mathcal{A}$, then we obtain $\bh_\F(x)=-\bh_\F(y)$.
		
		Case (2): This case is similar to Case (1). Suppose $x\in \{|\alpha|-m+1,|\beta|-m+1\}$. Then $x$ is clearly determined as $\bh_{\F}(|\alpha|-m+1)=-\quad\mbox{and}\quad\bh_{\F}(|\beta|-m+1)=+$.
        Since $y\equiv x+y\mod r$ and $x,y\in A_1$, we get $[y]=[x+y]\in \mathcal{A}\cup \mathcal{B}$ by Lemma \ref{lem:symmetry}. If $[y]\in \mathcal{B}$, then both $y$ and $x+y$ are determined by Lemma \ref{lem:sym}; otherwise,  $[y]\in\mathcal{A}$ and $\bh_{\F}(y)=\bh_{\F}(x+y)$.
		
		Case (3): Let $\lambda=\lambda(x,y,|\alpha|+|\beta|-x-y-m+1,m-1)$ and $d=[h_\lambda]s_\F$.  Then $\{(\CMcal{S};0)\succeq(\CMcal{\F}):\lambda(\CMcal{S})=\lambda\}=\varnothing$. Hence, by (\ref{eq:JT}), the multiset $\{(\gamma;1)\succeq(\F):\lambda(\gamma,m-1)=\lambda\}$ contains exactly $d$ elements and $\bh_{\F}(x)=\bh_{\F}(y)=-\bh_{\F}(x+y)$ if and only if $d=0$.
	\end{proof}	
	\begin{lemma}\label{lem:BTvW2}
		Suppose exactly two of $x,y,x+y$ belong to $A_1$, say $i,j\in A_1$. Then whether or not $$\bh_{\F}(i)=\bh_{\F}(j)$$ {\emma holds} can be determined, unless  $x+y \in\{|\alpha|-m+1,|\beta|-m+1\}$ or $\{x,y\}\cap\{|\alpha|,|\beta|\}\ne \varnothing$. Moreover, if $m-1\in A_0$ and $\mathcal{B}=\varnothing$, 
		the statement is always true. 
	\end{lemma}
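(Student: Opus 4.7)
My plan is to adapt the strategy of Lemma \ref{lem:BTvW5} by choosing $\lambda = \lambda(x, y, n-x-y-m+1, m-1)$ where $n = |\alpha|+|\beta|$, and letting $d = |[h_\lambda]s_\D|$, which is invariant under equivalence. The hypothesis ``exactly two of $\{x, y, x+y\}$ lie in $A_1$'' splits into two symmetric main cases: (i) $x, y \in A_1$ and $x+y \in A_0 \cup A_2$, and (ii) $x, x+y \in A_1$ and $y \in A_0 \cup A_2$. These are treated by parallel arguments, so I focus on (i).

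In the generic sub-case where $\{x, y, x+y, m-1\}$ does not meet the forbidden set $F = \{|\alpha|-m+1, |\alpha|, |\beta|-m+1, |\beta|\}$, I first verify that no $(\CMcal{S};0) \succeq (\D)$ has $\lambda(\CMcal{S}) = \lambda$; this reduces $d$, via (\ref{eq:JT}), to the number of $(\gamma; 1) \succeq (\D)$ with $\lambda(\gamma, m-1) = \lambda$. A direct counting argument in the spirit of the tables in Lemmas \ref{lem:sym2} and \ref{lem:BTvW5} then gives: if $x+y \in A_0$, then $d \in \{0, 1\}$ with $d = 0$ iff $\bh_\D(x) = \bh_\D(y)$; if $x+y \in A_2$, then $d \in \{2, 3\}$ with $d = 2$ iff $\bh_\D(x) = \bh_\D(y)$. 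In either case $d$ determines the desired equality.

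The non-generic but non-excluded sub-cases---namely $m-1 \in \{x, y, x+y\}$, or $x+y \in \{|\alpha|, |\beta|\}$, or $\{x, y\} \cap \{|\alpha|-m+1, |\beta|-m+1\} \neq \varnothing$---require a workaround reminiscent of Cases (1)-(c) and (2)-(c) of Lemma \ref{lem:BTvW5}. I would replace an offending entry by its partner under one of the three relations in Definition \ref{def:equiv} to obtain an auxiliary triple in the generic regime, analyse $d$ for the new partition, and then transport the sign back using Lemma \ref{lem:sym}, which ties the (un)determinedness of an element to that of its equivalence-class partners. The excluded configurations $x+y \in \{|\alpha|-m+1, |\beta|-m+1\}$ and $\{x, y\} \cap \{|\alpha|, |\beta|\} \neq \varnothing$ genuinely obstruct such a detour, because every natural auxiliary triple reproduces the same obstruction: the partition $\lambda$ would pick up contributions from $(\CMcal{S};0)$-coarsenings that cannot be disentangled from those with $k = 1$.

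For the moreover part, the hypotheses $m-1 \in A_0$ and $\mathcal{B} = \varnothing$ force every equivalence class meeting $A_1$ to sit in $\mathcal{A}$, so by Definition \ref{Def:7} the function $\bh_\D$ is $r$-periodic on $A_1$. Given any $x, y \in A_1$, I can therefore shift $x$ or $y$ by a suitable multiple of $r$ to move the triple out of $F$ without changing any of the relevant signs, after which the generic argument applies; the $m-1 \in A_0$ hypothesis additionally pins down the sign at $m-1$ and supplies the last missing piece needed to recover $d$ when a shift forces the partition to involve a boundary value. The main obstacle I anticipate is the combinatorial bookkeeping when several entries of $\{x, y, x+y\}$ simultaneously lie in $F$, and ensuring that the $r$-shift genuinely escapes $F$ without landing in the strictly excluded positions; this check must be carried out carefully against Definition \ref{Def:7} and Lemma \ref{lem:sym}, and constitutes the delicate point of the proof.
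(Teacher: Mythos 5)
Your generic sub-case is correct and identical in substance to the paper's Cases (I)--(3) and (II)--(3): take $\lambda=\lambda(x,y,|\alpha|+|\beta|-x-y-m+1,m-1)$, observe that no $(\CMcal{S};0)$-coarsening hits $\lambda$, and read off the sign relation from $d=|[h_\lambda]s_\D|$ via the tables. The problems are with the non-generic sub-cases, which are the real content of the lemma.

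For the non-generic but non-excluded positions (e.g.\ $x+y\in\{|\alpha|,|\beta|\}$, or $\{x,y\}\cap\{|\alpha|-m+1,|\beta|-m+1\}\neq\varnothing$), your plan of building an ``auxiliary triple'' by replacing an offending entry with a Definition~\ref{def:equiv} partner, computing $d$ for the new partition, and ``transporting the sign back'' does not close the loop. To transport the sign you already need to know whether the relevant equivalence class lies in $\mathcal{A}$ or $\mathcal{B}$---but once you invoke that classification (which is invariant under equivalence by Lemma~\ref{lem:class}), you get the answer directly and the auxiliary triple is superfluous. The paper does exactly this: in these sub-cases it observes via Lemma~\ref{lem:symmetry} that two of $\{x,y,x+y\}$ lie in the same equivalence class, and then either the class is in $\mathcal{B}$, so all the elements involved are determined by Lemma~\ref{lem:sym}, or the class is in $\mathcal{A}$, in which case the $r$-periodicity built into Definition~\ref{Def:7} together with $\bh_\D(x)=-\bh_\D(|\alpha|+|\beta|-m+1-x)$ pins down the sign relation outright. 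Your proposal also adds $m-1$ to the forbidden set for the generic argument, but the paper does not need $m-1\notin\{x,y,x+y\}$ there; conversely, it drops the useful observation that for $x+y\in\{|\alpha|,|\beta|\}$ one has $[x]=[y]$, which is the fulcrum of the paper's reduction.

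For the ``moreover'' part, your $r$-shift idea is genuinely different from the paper's, which instead forms the new triple $(m-1,y,y+m-1)$ (or $(m-1,x,x+m-1)$) and notes that $m-1\in A_0$ and $\mathcal{B}=\varnothing$ force $x+y\in A_0$ and $[x],[y]\in\mathcal{A}$, so the new triple is in the generic regime of Case (II)--(3) and the sign of $y+m-1$ relates to that of $x$ through $\bh_\D(x)=-\bh_\D(y+m-1)$. An $r$-shift might work, but you flag the verification that it actually escapes $F$ as ``the delicate point'' without carrying it out, and the landing point $x+y+tr$ can coincide with other elements of $F$ for some $t$ (since $r\mid|\beta|-|\alpha|$), so this is not a trivial bookkeeping check; the paper's choice of triple sidesteps it entirely.
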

	
	\begin{proof}
		Let $\lambda=\lambda(x,y,|\alpha|+|\beta|-x-y-m+1,m-1)$ and set $d=|[h_\lambda]s_\F|$.
		Without loss of generality, we discuss {\emma only} the cases {\emma where} either $x,y\in A_1$ {\emma or} $x,x+y\in A_1$ subject to the condition $x+y\not\in\{|\alpha|-m+1,|\beta|-m+1\}$ and $\{x,y\}\cap\{|\alpha|,|\beta|\}=\varnothing$. If $m-1\in A_0$ and $\mathcal{B}=\varnothing$, we will show that the conclusion {\emma holds} without {\emma these restrictions}. \vspace{2mm}
		
		Case (I): $x,y\in A_1$ and $x+y\not\in A_1$:
		\begin{enumerate}
			\item If $x+y\in\{|\alpha|,|\beta|\}$, 
			then $\bh_{\F}(x+y)=-$. By Lemma \ref{lem:symmetry}, we have $[x]=[y]$. If $[x]\in\mathcal{B}$, then $x$ and $y$ are determined, and we are done. If $[x]\in\mathcal{A}$, then $\bh_\F(x)=-\bh_\F(y)$.
			\item If $x+y\notin\{|\alpha|-m+1,|\alpha|,|\beta|-m+1,|\beta|\}$ and $\{x,y\}\cap\{|\alpha|-m+1,|\beta|-m+1\}\neq\varnothing$, assume without loss of generality that $x\in\{|\alpha|-m+1,|\beta|-m+1\}$. Then $x$ must be determined. Moreover, $[y]=[x+y]$ by Lemma \ref{lem:symmetry}. Since $x+y\not\in A_1$, we find $[x+y]\notin\mathcal{A}$; together with Lemma \ref{lem:sym}, this implies that both $y$ and $x+y$ are determined.
			\item If $\{x,y,x+y\}\cap\{|\alpha|-m+1,|\alpha|,|\beta|-m+1,|\beta|\}=\varnothing$, then  $\{(\CMcal{S};0)\succeq(\CMcal{\F}):\lambda(\CMcal{S})=\lambda\}=\varnothing$. Hence, by (\ref{eq:JT}), the multiset $\{(\gamma;1)\succeq(\F):\lambda(\gamma,m-1)=\lambda\}$ contains exactly $d$ elements. Furthermore, {\emma whether $\bh_\F(x)=\bh_\F(y)$ or $\bh_\F(x)=-\bh_\F(y)$ is determined according to the following table:}
			 {\emma 
				\begin{center}
					\begin{tabular}{c|cc}
						Case (I)--(3)& $\bh_\F(x)/\bh_\F(y)$ & $d$ \\
						\hline \\[-1.5ex]
						$x+y\in A_0$ & $1$ & $0$ \\
						& $-1$ & $1$ \\
						\hline \\[-1.5ex]
						$x+y\in A_2$ & $1$ & $2$ \\
						& $-1$ & $3$ 
					\end{tabular}
			\end{center}}
		\end{enumerate}
		
		Case (II): $x,x+y\in A_1$ and $y\not\in A_1$:
		\begin{enumerate}
			\item $\{x,y\}\cap\{|\alpha|-m+1,|\beta|-m+1\}\neq\varnothing$:\\
			If $y\in\{|\alpha|-m+1,|\beta|-m+1\}$, then $y$ is determined, since $y\notin A_1$. In addition, $[x+y]=[x]$. If $[x]\in\mathcal{B}$, then $x$ and $x+y$ are determined, and we are done. If $[x]\in\mathcal{A}$, then $\bh_\F(x)=\bh_\F(x+y)$.
			\\
			Otherwise, if $x\in\{|\alpha|-m+1,|\beta|-m+1\}$, then $x$ is determined, since $\bh_{\F}(|\alpha|-m+1)=-$ and $\bh_{\F}(|\beta|-m+1)=+$. Moreover, $[x+y]=[y]$,  and {\emma since} $y\not\in A_1$, we have $[y]\notin\mathcal{A}$. By Lemma \ref{lem:sym}, both $x+y$ and $y$ are determined.
			\item $\{x,y\}\cap\{|\alpha|-m+1,|\alpha|,|\beta|-m+1,|\beta|\}=\varnothing$ and $x+y\in\{|\alpha|,|\beta|\}$:\\ Then $\bh_{\F}(|\alpha|)=-$ and $\bh_{\F}(|\beta|)=+$. Moreover, $[x]=[y]$ by Lemma \ref{lem:symmetry}, and we have $[y]\notin\mathcal{A}$, since $y\not\in A_1$. By Lemma \ref{lem:sym}, we know that $x,y$ are also determined.
			\item  $\{x,y,x+y\}\cap\{|\alpha|-m+1,|\alpha|,|\beta|-m+1,|\beta|\}=\varnothing$: \\
			Then $\{(\CMcal{S};0)\succeq(\CMcal{\F}):\lambda(\CMcal{S})=\lambda\}=\varnothing$. Hence, the multiset $\{(\gamma;1)\succeq(\F):\lambda(\gamma,m-1)=\lambda\}$ contains exactly $d$ elements by (\ref{eq:JT}). Furthermore, {\emma whether $\bh_\F(x)=\bh_\F(x+y)$ or $\bh_\F(x)=-\bh_\F(x+y)$ is determined according to the following table:}
			{\emma 
				\begin{center}
					\begin{tabular}{c|cc}
					 Case (II)--(3)	& $\bh_\F(x)/\bh_\F(x+y)$ & $d$ \\
						\hline \\[-1.5ex]
						$y\in A_0$ & $1$ & $1$ \\
						& $-1$ & $0$ \\
						\hline \\[-1.5ex]
						$y\in A_2$ & $1$ & $3$ \\
						& $-1$ & $2$ 
					\end{tabular}
			\end{center}
		    \vspace{2mm}}

		\end{enumerate}
		
		Case (III): $m-1\in A_0$, $\mathcal{B}=\varnothing$ and 
		$x+y\in\{|\alpha|-m+1,|\beta|-m+1\}$: \\
		Observe that $x+y\in A_0$; otherwise, if $x+y\in A_1$, then $[m-1]\in\mathcal{B}$, {\emma contradicting} the assumption $\mathcal{B}=\varnothing$. {\emma Hence}
		$x,y\in A_1$, so $[x],[y]\in\mathcal{A}$, which {\emma gives} $\bh_\F(x)=-\bh_\F(y+m-1)$ and $y+m-1\in A_1$. {\emma Applying} Case (II)--(3) to {\emma the triple $(m-1,y,y+m-1)$}, we find that whether $\bh_\F(y)=\bh_\F(y+m-1)$ {\emma holds} can be determined. {\emma Consequently,} we can determine whether $\bh_\F(x)=\bh_\F(y)$.
		
		Case (IV): $m-1\in A_0$, $\mathcal{B}=\varnothing$ and 
		$\{x,y\}\cap\{|\alpha|,|\beta|\}\ne \varnothing$:\\ 
		Suppose $y\in\{|\alpha|,|\beta|\}$. {\emma Then} $y\in A_0$; otherwise, if $y\in A_1$, then $[y]=[m-1]$ and $[m-1]\in\mathcal{B}$, a contradiction. {\emma Hence},
		$x,x+y\in A_1$, so $[x],[x+y]\in\mathcal{A}$, which yields $\bh_\F(x+y)=\bh_\F(x+m-1)$ and $x+m-1\in A_1$. {\emma Applying} Case (II)--(3) {\emma to the triple $(m-1,x,x+m-1)$}, we obtain that whether $\bh_\F(x)=\bh_\F(x+m-1)$ holds can be determined. Hence, we can determine whether  $\bh_\F(x)=\bh_\F(x+y)$.
		
	\end{proof}
	
	The arguments for the next two lemmas are completely analogous to Lemma \ref{lem:D2} and Proposition \ref{lem:final2}, respectively, which are consequences of Lemmas \ref{lem:BTvW1} and \ref{lem:BTvW2}. We therefore omit the details.
	
	\begin{lemma}\label{lem:D}
		The function $g_t$ satisfies the assumptions of Lemma \ref{lem:BTvW3} {\emma in} the following two cases:
		\begin{enumerate}
			\item $m-1\in A_0$ and $\mathcal{B}=\varnothing$. For each $2\leq t\leq z$, take $d=m_t$ and $c=r_{t-1}$. 
			\item $m-1\in A_1\cup A_2$ or $\mathcal{B}\ne \varnothing$. For each $2\leq t\leq z-1$, take $d=m_t$ and $c=r_{t-1}$. 
		\end{enumerate}
		Consequently, if we set $s=z$ for (1) and $s=z-1$ for (2), then
		$g_s$ is $r_s$-periodic on {\emma  $[1,m_{s}-1]$} except {\emma at} multiples of $r_s$, and $g_s$ is antisymmetric on {\emma $[1,r_s-1]$}. If $x<m_1$ and $r_s\mid x$, then $x\in A_0\cup A_2$.
	\end{lemma}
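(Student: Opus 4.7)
The plan is to adapt the three-step proof of Lemma~\ref{lem:D2} to the case $|\alpha|\ne|\beta|$, substituting Lemmas~\ref{lem:BTvW1} and~\ref{lem:BTvW2} for Lemmas~\ref{lem:BTvW4} and~\ref{lem:BTvW5}, and carefully tracking the exceptional elements $\{|\alpha|-m+1,|\alpha|,|\beta|-m+1,|\beta|\}$ that now obstruct a direct transcription. Throughout I fix an arbitrary $x\in\{1,\dots,m_t-1\}$ with $g_t(x)\ne *$, so that $r_{t-1}\nmid x$. Note that since $m_s$ is undetermined for each $s$, Lemma~\ref{lem:sym} gives $m-1\le m_s\le|\alpha|-m$; in particular, in case~(2) the restriction $t\le z-1$ yields $m_t\le m_{z-1}<m_1+r\le|\alpha|$, which will be crucial when invoking Lemmas~\ref{lem:BTvW1}--\ref{lem:BTvW2}.

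To verify condition~(1) of Lemma~\ref{lem:BTvW3}, I apply Lemmas~\ref{lem:BTvW1} and~\ref{lem:BTvW2} to the triple $(x,m_t-x,m_t)$. Since $m_t$ is undetermined, it lies in $A_1$. Split on $g_t(x)=0$ versus $|g_t(x)|=1$: in the first case Lemma~\ref{lem:BTvW2} forces $m_t-x\in A_0\cup A_2$ or $m_t-x$ to be undetermined (i.e.\ $g_t(m_t-x)\in\{0,*\}$); in the second, Lemmas~\ref{lem:BTvW1}--\ref{lem:BTvW2} give $m_t-x\in A_1$ with $\bh_\D(m_t-x)=-\bh_\D(x)$, or $g_t(m_t-x)=*$. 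The exceptions in those lemmas (namely $x+y\in\{|\alpha|-m+1,|\beta|-m+1\}$ or $\{x,y\}\cap\{|\alpha|,|\beta|\}\ne\varnothing$) can only arise when $m_t$ is adjacent to $\{|\alpha|-m+1,|\alpha|,|\beta|-m+1,|\beta|\}$; in case~(2) the restriction $t\le z-1$ rules this out, while in case~(1) the hypothesis $m-1\in A_0$ together with $\mathcal{B}=\varnothing$ permits the use of Cases~(III)--(IV) of Lemma~\ref{lem:BTvW2} (and an analogous reroute through the triple $(m-1,y,y+m-1)$ for Lemma~\ref{lem:BTvW1}) to recover the conclusion.

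For condition~(2), fix any $1\le s<t$ with $x+m_s<m_t$. Since $m_s$ is undetermined and hence lies in $A_1$, Lemmas~\ref{lem:BTvW1}--\ref{lem:BTvW2} applied to the triple $(x,m_s,x+m_s)$ yield that either $g_t(x+m_s)=g_t(x)$ or $x+m_s$ is undetermined. But the latter would force $r_{t-1}\mid x$, contradicting $g_t(x)\ne *$; so $g_t$ is $m_s$-periodic off the multiples of $r_{t-1}$. The Euclidean algorithm then promotes this to $r_{t-1}$-periodicity, which is exactly condition~(2). The potentially problematic triples are again the ones involving $\{|\alpha|-m+1,|\alpha|,|\beta|-m+1,|\beta|\}$, and these are excluded either by the range restriction in case~(2) or by the bypass via Cases~(III)--(IV) of Lemma~\ref{lem:BTvW2} in case~(1).

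Condition~(3) is then handled by induction on $t$ exactly as in Lemma~\ref{lem:D2}: the base $t=2$ reduces to condition~(1) since $r_1=m_1$, and at the inductive step Lemma~\ref{lem:BTvW3} applied to $g_t$ produces antisymmetry on $\{1,\dots,r_t-1\}$, which immediately yields condition~(3) for $g_{t+1}$. Finally, the ``Consequently'' clause is mechanical: Lemma~\ref{lem:BTvW3} delivers the $r_s$-periodicity and antisymmetry of $g_s$; and if $x<m_1$ with $r_s\mid x$, choosing the smallest $t$ with $r_t\mid x$ gives $2\le t\le s$ with $r_{t-1}\nmid x$, whence $g_t(x)\ne *$, and Lemma~\ref{lem:BTvW3} forces $g_t(x)=0$, i.e.\ $x\in A_0\cup A_2$. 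The main obstacle is bookkeeping the boundary elements $\{|\alpha|-m+1,|\alpha|,|\beta|-m+1,|\beta|\}$ that trigger the exceptional clauses of Lemmas~\ref{lem:BTvW1}--\ref{lem:BTvW2}; the bifurcation into cases~(1) and~(2), together with the truncation $t\le z-1$ in case~(2), is precisely what renders those exceptions harmless.
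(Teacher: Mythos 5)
Your overall plan---transcribing the three-step argument of Lemma~\ref{lem:D2} with Lemmas~\ref{lem:BTvW1}--\ref{lem:BTvW2} in place of Lemmas~\ref{lem:BTvW4}--\ref{lem:BTvW5}, using the ``moreover'' clause of Lemma~\ref{lem:BTvW2} (Cases~III--IV) and a reroute through $(m-1,y,y+m-1)$ to handle the boundary elements in case~(1), and truncating at $t\le z-1$ in case~(2)---is the right approach and matches what the paper calls ``completely analogous.'' However, the bound that makes the truncation work is stated incorrectly, and this is the crux of the lemma. You write that Lemma~\ref{lem:sym} gives $m-1\le m_s\le|\alpha|-m$ for every $m_s$, but Lemma~\ref{lem:sym} only constrains $\min[m_s]$, not $m_s$ itself; in fact $m_z=m_1+r$ is typically far above $|\alpha|-m$. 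You then claim $m_{z-1}<m_1+r\le|\alpha|$. The inequality $m_1+r\le|\alpha|$ is false in general: whenever $r=|\alpha|-m+1$ (which can occur with $|\alpha|<|\beta|$, e.g.\ $|\alpha|=5$, $|\beta|=9$, $m=2$, $r=4$) and $m_1\ge m$ (which holds in case~(2) whenever $m-1$ is determined), one gets $m_1+r>|\alpha|$. Worse, even if that inequality did hold, $m_{z-1}<|\alpha|$ is too weak: to avoid the exceptional clause of Lemmas~\ref{lem:BTvW1}--\ref{lem:BTvW2} in the sub-triples $(x,m_s,x+m_s)$ you need every sum to miss $|\alpha|-m+1$, i.e.\ you need $m_{z-1}<|\alpha|-m+1$, not merely $m_{z-1}<|\alpha|$.

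The sharper bound $m_{z-1}<|\alpha|-m+1$ (equivalently $m_{z-1}\le|\alpha|-m$) is true, but it needs its own argument, which you omit. The point is a shift-by-$r$ contradiction: if $j$ is undetermined with $m_1<j<m_1+r$ and $j>r$, then $j-r\in[j]$, and since $j-r<m_1<|\alpha|/2<|\alpha|-m+1$, Lemma~\ref{lem:sym} forces $j-r$ to be undetermined, contradicting the minimality of $m_1$; hence any such $j$ satisfies $j\le r\le|\alpha|-m+1$, and $j\ne|\alpha|-m+1$ because $|\alpha|-m+1$ is determined. Once you substitute this argument for the faulty chain $m_{z-1}<m_1+r\le|\alpha|$, your proof reads correctly; the remaining steps (verifying conditions~(1)--(3) of Lemma~\ref{lem:BTvW3} and the ``Consequently'' clause) track Lemma~\ref{lem:D2} as intended.
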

	\begin{lemma}\label{lem:final}
		If $\F$ is canonical and $m-1\in A_1$ is determined, then {\emma every element is} determined.
	\end{lemma}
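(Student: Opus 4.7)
The plan is to mirror the structure of the proof of Proposition \ref{lem:final2}, substituting the $|\alpha|\neq|\beta|$ analogues: Lemma \ref{lem:BTvW1} in place of Lemma \ref{lem:BTvW4}, Lemma \ref{lem:BTvW2} in place of Lemma \ref{lem:BTvW5}, Lemma \ref{lem:D} in place of Lemma \ref{lem:D2}, and Lemma \ref{lem:sym} in place of Lemma \ref{lem:sym2}. First, suppose for contradiction that some element of $\D$ is undetermined. By Lemma \ref{lem:sym}, the smallest such element $m_1$ satisfies $m-1\le m_1\le|\alpha|-m$ with $[m_1]\in\mathcal{A}$; moreover $m-1<m_1$ since $m-1$ is assumed to be determined. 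We are therefore in situation (2) of Lemma \ref{lem:D}, which yields that $g_{z-1}$ is $r_{z-1}$-periodic on $\{1,\ldots,m_{z-1}-1\}$ off multiples of $r_{z-1}$, antisymmetric on $\{1,\ldots,r_{z-1}-1\}$, and $r_{z-1}\mid\gcd(m_1,|\alpha|-m_1)$, in particular $r_{z-1}\mid|\alpha|-m+1$.

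Because $m-1<m_1$ is a determined element of $A_1$, Lemma \ref{lem:D} forces $r_{z-1}\nmid(m-1)$; write $m-1\equiv a\pmod{r_{z-1}}$ with $0<a<r_{z-1}$. Antisymmetry then gives $g_{z-1}(a)=g_{z-1}(m-1)=-g_{z-1}(r_{z-1}-a)=\pm 1$. The key claim to establish is that $g_{z-1}(u)=g_{z-1}(u+m-1)$ for all $1\le u\le m_{z-1}-m$ with $r_{z-1}\nmid u$ and $r_{z-1}\nmid u+m-1$. Granted this, $g_{z-1}$ is also $(m-1)$-periodic off multiples of $r_{z-1}$, hence $\gcd(r_{z-1},m-1)$-periodic off multiples of $r_{z-1}$. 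Since $r_{z-1}\nmid(m-1)$, this gcd is strictly smaller than $r_{z-1}$, which forces $g_{z-1}(a)=g_{z-1}(r_{z-1}-a)$, contradicting $g_{z-1}(a)=-g_{z-1}(r_{z-1}-a)=\pm 1$.

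To prove the periodicity claim, I would split on the type of $u$. If $u\in A_0\cup A_2$, Lemma \ref{lem:BTvW2} applied to the triple $(u,m-1,u+m-1)$ shows that $u+m-1\in A_0\cup A_2$, since the alternative $u+m-1\in A_1$ would be determined (by $r_{z-1}\nmid u+m-1$) and contradict $g_{z-1}(u)=0$. If $u\in A_1$, then $u$ is determined, and repeated applications of Lemmas \ref{lem:BTvW1} and \ref{lem:BTvW2} to the triples $(m_1,u,m_1+u)$, $(r-u,m_1+u,r+m_1)$, and, depending on whether $m_1+u<r$ or $m_1+u>r$, either $(r-u-m_1,m_1,r-u)$ together with $(u+m-1,r-u-m_1,|\alpha|-m_1)$, or $(m_1+u-r,r-u,m_1)$ together with $(m_1+u-r,|\alpha|-m_1,u+m-1)$, propagate the sign to give $\bh_\D(u+m-1)=\bh_\D(u)$.

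The principal obstacle, compared to the $|\alpha|=|\beta|$ setting, is that Lemmas \ref{lem:BTvW1} and \ref{lem:BTvW2} carry extra exceptional hypotheses at the boundary values in $\{|\alpha|-m+1,|\alpha|,|\beta|-m+1,|\beta|\}$. The plan is to exploit $m\le m_1\le|\alpha|-m$, $r_{z-1}\mid|\alpha|-m+1$, and the conditions $r_{z-1}\nmid u$, $r_{z-1}\nmid u+m-1$ to guarantee that no intermediate element in the chain coincides with a boundary value. The $|\beta|$-side exceptions will require the most careful bookkeeping; whenever $m_1+u$, $r-u$, $m_1+u-r$, or $r-u-m_1$ would land at $|\beta|-m+1$ or $|\beta|$, one replaces the problematic triple by a surrogate that exploits $[m_1]\in\mathcal{A}$ together with the $r_{z-1}$-periodicity and antisymmetry of $g_{z-1}$. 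This case analysis is the technical heart of the argument; once completed, the contradiction sketched above closes the proof.
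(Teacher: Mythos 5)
Your plan instantiates exactly what the paper intends: it states that the argument is ``completely analogous'' to Proposition~\ref{lem:final2} using Lemmas~\ref{lem:BTvW1}, \ref{lem:BTvW2} and \ref{lem:D}, and omits the details. The skeleton --- establishing $(m-1)$-periodicity of $g_{z-1}$ off multiples of $r_{z-1}$, then contradicting antisymmetry via $\gcd(r_{z-1},m-1)<r_{z-1}$ --- matches, as does the case split on the type of $u$ and the reduction to the case $u\in A_1$.

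There is, however, one concrete slip that would break the argument as written. The final triple in each branch carries over $|\alpha|-m_1$ verbatim from the $|\alpha|=|\beta|$ proof. There $r=|\alpha|-m+1$, so $(u+m-1)+(r-u-m_1)=r+m-1-m_1=|\alpha|-m_1$, and the triple is a genuine $(x,y,x+y)$. When $|\alpha|\neq|\beta|$, the quantity $r$ may be a proper divisor of $|\alpha|-m+1$, and then neither $(u+m-1,\,r-u-m_1,\,|\alpha|-m_1)$ nor $(m_1+u-r,\,|\alpha|-m_1,\,u+m-1)$ sums correctly, so Lemmas~\ref{lem:BTvW1} and~\ref{lem:BTvW2} simply do not apply to them. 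You need to replace $|\alpha|-m_1$ by $r+m-1-m_1$: by Lemma~\ref{lem:symmetry} this lies in $[m_1]$ (it is $\equiv m-1-m_1 \pmod r$), by Lemma~\ref{lem:sym} it is therefore undetermined, and since $m_1\ge m$ it satisfies $r+m-1-m_1\le r-1<|\alpha|-m+1\le|\beta|-m+1$, keeping it clear of every exceptional value in Lemmas~\ref{lem:BTvW1} and~\ref{lem:BTvW2}. Once this substitution is made, the boundary bookkeeping you flag as the ``technical heart'' is less fraught than you suggest: each element appearing in the six triples is strictly less than $m_1+r$, which is strictly less than $|\beta|-m+1$ (as $|\alpha|-m+1$ and $|\beta|-m+1$ are distinct multiples of $r$ with $|\alpha|<|\beta|$, giving $|\beta|-m+1\ge 2r>m_1+r$); and the one potentially dangerous coincidence $m_1+u=|\alpha|-m+1$ is ruled out because it would force $u+m-1=|\alpha|-m_1$ and hence $r_{z-1}\mid(u+m-1)$, contradicting your standing hypothesis.
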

	We are now ready to prove Theorem \ref{thm:a<b} (b), with the help of Proposition \ref{lem:main}.
	\begin{proposition}\label{lem:main}
		Suppose $\F$ is canonical and $\mathcal{B}=\varnothing$. Let $k$ be the smallest integer such that $[k]\in \mathcal{A}$. If $k$ is determined, then all elements are determined.
	\end{proposition}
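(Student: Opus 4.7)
The plan is to argue by contradiction in the spirit of Proposition \ref{lem:final2}. Assume that some element is undetermined, let $M$ be the set of undetermined elements, and set $m_1=\min M$. By Lemma \ref{lem:sym} every undetermined element lies in an $\mathcal{A}$-class, so $m_1\geq k$; since $k$ is determined while $m_1$ is not, one has $k<m_1$, and moreover $k\in A_1$. Because $\mathcal{B}=\varnothing$, Lemma \ref{lem:D} applies with $s=z$ if $m-1\in A_0$ and $s=z-1$ if $m-1\in A_1\cup A_2$: the function $g_s$ is $r_s$-periodic on $\{1,\ldots,m_s-1\}$ away from multiples of $r_s$, antisymmetric on $\{1,\ldots,r_s-1\}$, and (by the final assertion of Lemma \ref{lem:D}) takes only the values $0$ or $*$ at multiples of $r_s$ below $m_1$. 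Combined with $k<m_1$ and $k\in A_1$, this forces $r_s\nmid k$, so $g_s(k)=\pm 1$.

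The central step is to show that $g_s$ also admits $k$ as a period, i.e.\
\[
g_s(u)=g_s(u+k)
\]
whenever $u,u+k$ lie in the domain of $g_s$ and neither is a multiple of $r_{s-1}$. I first establish this for $u\in A_1$: Lemma \ref{lem:BTvW1} applied to $(u,k,u+k)$ when $u+k\in A_1$, and Lemma \ref{lem:BTvW2} when $u+k\in A_0\cup A_2$, together with the determinedness of $k$, force the desired equality. When $m-1\in A_0$ the hypothesis $\mathcal{B}=\varnothing$ removes all exceptions of Lemma \ref{lem:BTvW2}; in the subcase $m-1\in A_1\cup A_2$ the remaining boundary triples (those whose sum hits $|\alpha|-m+1$ or $|\beta|-m+1$, or one of whose terms equals $|\alpha|$ or $|\beta|$) are handled using that $\bh_\D$ is determined at these four special indices for canonical $\D$. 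The extension of $k$-periodicity from $A_1$ to $A_0\cup A_2$ then follows the same mechanism as in Proposition \ref{lem:final2}: for $u\in A_0\cup A_2$, either Lemma \ref{lem:BTvW2} already gives $u+k\in A_0\cup A_2$, or $u+k\in A_1$ would be determined, in which case the established $k$-periodicity on $A_1$ yields $g_s(u)\neq 0$, contradicting $u\in A_0\cup A_2$.

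With both $r_s$- and $k$-periodicities in place on non-multiples of $r_{s-1}$, the Euclidean algorithm produces $\sigma$-periodicity for $g_s$ on non-multiples of $r_s$, where $\sigma:=\gcd(r_s,k)$. Since $r_s\nmid k$, $\sigma$ is a proper divisor of $r_s$. Writing $a:=k\bmod r_s\in\{1,\ldots,r_s-1\}$, the integer $a$ is a nonzero multiple of $\sigma$ with $g_s(a)=g_s(k)=\pm 1$. However the $\sigma$-periodicity combined with the antisymmetry on $\{1,\ldots,r_s-1\}$ yields $g_s(a)=g_s(r_s-a)=-g_s(a)$, forcing $g_s(a)=0$, a contradiction that completes the proof.

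The main obstacle is the second paragraph: verifying $k$-periodicity of $g_s$ requires a careful case analysis across all the subcases of Lemmas \ref{lem:BTvW1}--\ref{lem:BTvW2}, particularly in the subcase $m-1\in A_1\cup A_2$ where these lemmas have genuine exceptions at the boundary indices $|\alpha|-m+1,|\alpha|,|\beta|-m+1,|\beta|$. The canonical hypothesis, the determined values of $\bh_\D$ at these four indices, and the $r$-periodicity intrinsic to $\mathcal{A}$-classes must be leveraged simultaneously to bootstrap past these exceptions and close the argument.
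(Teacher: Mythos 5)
Your overall framework (argue by contradiction, work with the functions $g_s$ from Definition \ref{Def:gt} and the periodicity/antisymmetry supplied by Lemma \ref{lem:D}, derive a parity contradiction on a short interval) is the same as the paper's, but the proposal hinges on a step that is not established and, in fact, is not true in the form stated: the claimed $k$-periodicity of $g_s$.

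Lemmas \ref{lem:BTvW1} and \ref{lem:BTvW2} have the shape ``\emph{whether or not} a certain relation among $\bh_\D(x),\bh_\D(y),\bh_\D(x+y)$ holds is determined,'' meaning the truth value is readable from $s_\D$, not that it holds. This only constrains $\bh_\D$ when at least one element of the triple is \emph{undetermined}: then determinacy of the relation would propagate to that element, a contradiction, and the only escape is a specific sign configuration. In your triples $(u,k,u+k)$ all three entries are determined (you restrict to $u$ off the multiples of $r_{s-1}$, so $g_s(u)\neq *$, and $k$ is determined by hypothesis), so the lemmas impose no constraint whatsoever and nothing ``forces'' $g_s(u)=g_s(u+k)$. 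The paper's periodicity argument instead runs its triples \emph{through} the smallest undetermined element $m_1$ and its shift $m_1+r$: from $(m_1,u,m_1+u)$ one gets $g(u)=g(m_1+u)$, from $(r-u,m_1+u,m_1+r)$ one gets $g(r-u)=-g(m_1+u)$, and then the built-in relation $g(u+m-1)=-g(r-u)$ (a consequence of the reflection $g(x)=-g(|\alpha|+|\beta|-m+1-x)$ and $r$-periodicity) produces $(m-1)$-periodicity, not $k$-periodicity. The period that actually appears is dictated by the structural symmetry of $\D$ and has nothing to do with $k$.

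Correspondingly, your endgame is also different from the paper's and relies on the unestablished $k$-periodicity. The paper takes $\sigma=\gcd(r_{z-1},m-1)$ and needs an extra ingredient you omit entirely: the observation that $\mathcal{B}=\varnothing$ forces $g(u)\in\{0,1\}$ for all $1\le u<m-1$ (if $[u]\in\mathcal{A}$ then $\bh_\D(u)=-\bh_\D(|\alpha|-u)=+$; if $[u]\in\mathcal{C}\cup\mathcal{D}$ then $g(u)=0$). The contradiction is then that antisymmetry and $\sigma$-periodicity would force both $+1$ and $-1$ to appear in $\{g(u):1\le u\le\sigma\}$, while $\sigma\le m-1$ makes $-1$ impossible there. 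In your version, with $\sigma=\gcd(r_s,k)$, the residue $a=k\bmod r_s$ is automatically a multiple of $\sigma$, so the clean identity $g_s(a)=g_s(r_s-a)=-g_s(a)$ would indeed kill $g_s(a)$ directly without needing the ``$g\in\{0,1\}$ on $[1,m-1)$'' observation; this is an attractive shortcut, but it is unavailable because the $k$-periodicity feeding it has no justification. To repair the argument you would need either to replace the $(u,k,u+k)$ triples by triples passing through $m_1$ and $m_1+r$ (as in the paper, yielding $(m-1)$-periodicity), and then recover the $g\in\{0,1\}$ input you dropped, or to find an independent reason why $\bh_\D$ should be $k$-periodic under the standing hypotheses, which I do not see.
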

	
	\begin{proof}
		
		Define a function $g:[1,|\alpha|+|\beta|-2m+2]\backslash\{|\alpha|-m+1,|\alpha|\}\to\{0,1,-1\}$ by
		$$g(x)=\begin{cases}
			0 & \text{ if } x\in A_0\cup A_2,\\
			1 & \text{ if } x\in A_1 \text{ and }\bh_\F(x)=+,\\
			-1 & \text{ if }x\in A_1 \text{ and }\bh_\F(x)=-.
		\end{cases}$$
		For $x\in \{|\alpha|-m+1,|\alpha|\}$, set $g(x)=0$. {\emma  We claim that} for all $1\le u<m-1$, we have 
		\begin{align}\label{E:g}
		g(u)\in \{0,1\}. 
		\end{align}
		Since $\mathcal{B}=\varnothing$, we have $[u]\in\mathcal{A}\cup \mathcal{C}\cup\mathcal{D}$. If $[u]\in \mathcal{A}$, then $g(u)=1$ {\emma because} $\bh_{\F}(u)=-\bh_{\F}(|\alpha|-u)=+$. If $[u]\in \mathcal{C}\cup\mathcal{D}$, then $g(u)=0$.
		
		
		The condition $\mathcal{B}=\varnothing$ implies that $g$ is $r$-periodic, except at $\{|\alpha|-m+1,|\alpha|\}$. {\emma That is}, 
		\begin{align}\label{E:gpe2}
			g(u)=g(u+r)
		\end{align}
		for $1\le u<r$ such that $\{u,u+r\}\cap \{|\alpha|-m+1,|\alpha|\}=\varnothing$. {\emma Indeed}, 
		if $u\in A_0\cup A_2$, then $u+r\in A_0\cup A_2$, since $\mathcal{B}=\varnothing$. Hence
		$g(u)=g(r+u)=0$. If $u\in A_1$, then $[u]=[u+r]\in \mathcal{A}$, thus $g(u)=g(r+u)$.
		
		Since $g(u)=-g(|\alpha|+|\beta|-m+1-u)$ for all $u$, we have $g(u+m-1)=-g(|\alpha|+|\beta|-2m+2-u)$. In particular, for $1\leq u< r$, (\ref{E:gpe2}) implies $$g(u+m-1)=-g(|\alpha|+|\beta|-2m+2-u)=-g(r-u).$$
		
		Suppose on the contrary that there exists an undetermined element. We now follow the notation from Definition \ref{Def:gt} and Lemma \ref{lem:D}. Recall that $r_{z-1}\mid \gcd(m_1,r+m-1-m_1)$, and in particular, $r_{z-1}\mid (r+m-1)$. 
		
		Fix $1\leq u\leq m_{z-1}-m+1$ such that $r_{z-1}\nmid u$ and $r_{z-1}\nmid (u+m-1)$, we have $r_{z-1} \nmid (r-u)$. In particular, $u$ and $r-u$ are determined.
		Then, $r_{z-1} \nmid (m_1+u)$ i.e. $m_1+u$ is determined. {\emma Applying Lemma \ref{lem:BTvW1} and \ref{lem:BTvW2} to the triples} 
		$$(m_1,u,m_1+u)\quad\mbox{ and }\quad(r-u,m_1+u,m_1+r),$$
		we obtain
		$g(r-u)=-g(m_1+u)=-g(u)$, since $u$, $r-u$ and $m_1+u$ are determined. {\emma Consequently},  $$g(u+m-1)=g(u)$$ for all $1\le u<m_{z-1}-m+1$ such that $r_{z-1}\nmid u$ and $r_{z-1}\nmid (u+m-1)$.
		
		By minimality of $k$, we have $k<m_1$. Recall {\emma from Lemma \ref{lem:D}} that $g_{z-1}$ is $r_{z-1}$-periodic on $[1,m_{z-1}-1]$, except at multiples of $r_{z-1}$. Let $$\sigma=\gcd(r_{z-1},m-1).$$ Then $g_{z-1}(u)=g(u)$, and $g_{z-1}$ is $\sigma$-periodic on the interval $[1,m_{z-1}-1]$, except at multiples of $r_{z-1}$. Since $k\in A_1$ and $k<m_1$, Lemma \ref{lem:D} guarantees that $r_{z-1}\nmid k$; in particular, $r_{z-1}>1$. We write $k\equiv a \mod r_{z-1}$ {\emma with} $a\ne 0$. By the antisymmetry of $g_{z-1}$, we have $g_{z-1}(r_{z-1}-a)=-g_{z-1}(a)=\pm 1$. Since $g_{z-1}$ is $\sigma$-periodic, both $1$ and $-1$ {\emma must} appear in the set $\{g(u):1\le u\le \sigma\}$. This contradicts the fact (\ref{E:g}).

	\end{proof}
	
	{\em Proof of Theorem \ref{thm:a<b} (b)}. Lemma \ref{lem:period} and Proposition \ref{lem:main} assert that the equivalence class of $\F$ contains at most two canonical thickened ribbons. On the other hand, {\emma  when $\T\ne \T^*$}, Theorem \ref{thm:MvW} {\emma provides} two equivalent canonical thickened ribbons. {\emma Hence} the equivalence class $[\F]$ {\emma contains} exactly two canonical thickened ribbons, namely, $\F$ and $\CMcal{S}\circ_{m-1}\T^*$. {\emma Equivalently}, the equivalence class $[\F]$ {\emma consists of} exactly four elements.  
	
	\qed	
	
	\subsection{Proof of Theorem \ref{thm:a<b} (2)}
	We now turn to the {\emma non-periodic} case $\mathcal B\cup\mathcal D\ne\varnothing$. The generalized triple lemmas below apply the same sign-detection mechanism separately to the ribbons $\alpha$ and $\beta$. Propositions \ref{lem:B1} and \ref{lem:B2} then rule out every remaining undetermined position.
	
	The next two lemmas, in parallel with Lemma \ref{lem:BTvW1} and \ref{lem:BTvW2}, make significant use of the condition $|\alpha|\ne |\beta|$ to derive determined relations of $\bh_{\F}$.
	
	\begin{lemma}\label{lem:GeneralBTvW1}
		Suppose that $\F$ is canonical. Then the statements below are true:
		\begin{enumerate}
			\item For $1\leq x,y,x+y\le |\alpha|-1$ satisfying $\bh_\F(i)=-\bh_\F(|\alpha|-i)$ with $i=x,y,x+y$. Then whether or not
			$$\bh_\F(x)=\bh_\F(y)=\bh_\F(|\alpha|-x-y)$$ can be determined, unless $m- 1\in\{x,y,|\alpha|-x-y\}$, $m-1\in A_2$ and $|\beta|\in A_1$. 
			\item For $1\leq x,y,x+y\le |\alpha|-1$ satisfying $\bh_\F(k)=-\bh_\F(|\alpha|-k)$ with $k=x,y,x+y$. Then whether or not
			$$\bh_\F(i)=\bh_\F(j)$$ can be determined, if $m-1\in\{x,y,|\alpha|-x-y\}$,  $\{i,j\}=\{x,y,|\alpha|-x-y\}\backslash\{m-1\}\not\subseteq A_1$, 
			$m-1\in A_2$ and $|\beta|\in A_1$;		 
			\item For $1\leq x,y,x+y\le |\beta|-1$ such that $\bh_\F(|\alpha|-m+1+i)=-\bh_\F(|\alpha|+|\beta|-m+1-i)$ with $i=x,y,x+y$. Then whether or not $$\bh_\F(|\alpha|-m+1+x)=\bh_\F(|\alpha|-m+1+y)=\bh_\F(|\alpha|+|\beta|-m+1-x-y)$$ can be determined, unless $m-1\in\{x,y,|\beta|-x-y\}$, $m-1\in A_2$ and $|\alpha|\in A_1$;
			\item For $1\leq x,y,x+y\le |\beta|-1$ such that $\bh_\F(|\alpha|-m+1+k)=-\bh_\F(|\alpha|+|\beta|-m+1-k)$ with $k=x,y,x+y$. Then whether or not $$\bh_\F(|\alpha|-m+1+i)=\bh_\F(|\alpha|-m+1+j)$$ can be determined, if $m-1\in\{x,y,|\beta|-x-y\}$, $\{i,j\}=\{x,y,|\beta|-x-y\}\backslash\{m-1\}\not\subseteq A_1$, $m-1\in A_2$ and $|\alpha|\in A_1$.
		\end{enumerate}
	\end{lemma}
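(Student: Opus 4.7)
The plan is to mimic the strategy of Lemma \ref{lem:BTvW1} and \ref{lem:BTvW2}, but now applied inside the ribbon $\alpha$ for parts (1)--(2) and inside the ribbon $\beta$ for parts (3)--(4). For each part I will select a test partition $\lambda$ and read off the desired sign relations from $|[h_\lambda]s_{\D}|$ via the expansion (\ref{eq:JT}), exploiting that $|\alpha|\ne|\beta|$ keeps the splits on the two sides from mixing.

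For part (1), take $\lambda=\lambda(x,y,|\alpha|-x-y,|\beta|)$. Since $|\alpha|\neq|\beta|$, the only way a $(\CMcal{S};0)$-coarsening of $\D$ with $\lambda(\CMcal{S})=\lambda$ can arise is $\CMcal{S}=\alpha'\boxdot(|\beta|)$ where $\alpha'$ is one of the six orderings of the triple $(x,y,|\alpha|-x-y)$. Each ordering $(a_1,a_2,a_3)$ contributes iff $\bh_\D(a_1)=\bh_\D(a_1+a_2)=+$, and the six sign conditions reduce, via the assumed antisymmetry $\bh_\D(|\alpha|-k)=-\bh_\D(k)$ for $k\in\{x,y,x+y\}$, to conditions on the triple $(\bh_\D(x),\bh_\D(y),\bh_\D(x+y))$. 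A direct case analysis over the eight possible sign patterns shows that the resulting count equals $0$ precisely when $\bh_\D(x)=\bh_\D(y)=-\bh_\D(x+y)=\bh_\D(|\alpha|-x-y)$, and equals $2$ otherwise. Since $|\beta|>m-1$ always, if in addition $m-1\notin\{x,y,|\alpha|-x-y\}$ then there are no $(\gamma;1)$ contributions to $[h_\lambda]s_\D$, so $|[h_\lambda]s_\D|$ coincides with this count and determines the desired relation.

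When $m-1\in\{x,y,|\alpha|-x-y\}$, additional $(\gamma;1)$ contributions arise. These correspond to coarsenings that first merge the unique box-dot of $\D$ into a ribbon of size $n-m+1$ with cut set $\mathsf{SET}(\alpha)\cup\bigl(|\alpha|-m+1+\mathsf{SET}(\beta)\bigr)$, and then split it into three pieces of sizes $\lambda\setminus\{m-1\}$ in some order. Each such ordering contributes with sign opposite to the $(\CMcal{S};0)$ terms, so cancellation is possible. A careful enumeration shows that $|[h_\lambda]s_\D|$ still distinguishes the relation unless $m-1\in A_2$ (providing the extra cut points near the centre) and $|\beta|\in A_1$ (providing the cut shifting the $\beta$ slot), which is exactly the excluded configuration. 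Part (2) then addresses this remaining regime by choosing a modified partition such as $\lambda(i,j,|\alpha|-i-j,|\beta|-m+1,m-1)$: the hypothesis $|\beta|\in A_1$ guarantees that $|\beta|-m+1$ is a forced cut of the merged ribbon, so this new $\lambda$ admits extra $(\gamma;1)$ contributions that break the previous balance; combined with $\{i,j\}\not\subseteq A_1$, which ensures one of $\bh_\D(i),\bh_\D(j)$ is already pinned down by Lemma \ref{lem:types}, the count recovers enough information to determine whether $\bh_\D(i)=\bh_\D(j)$. Parts (3) and (4) are completely symmetric, with the test partitions $\lambda(|\alpha|,x,y,|\beta|-x-y)$ and refined variants thereof, swapping the roles of $|\alpha|$ and $|\beta|$.

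The main obstacle is the bookkeeping of the $(\gamma;1)$ contributions, and in particular verifying that the counts from $(\CMcal{S};0)$ and $(\gamma;1)$ cancel exactly, and only, in the excluded configurations. This requires tracking several degenerate possibilities at once: when two of the three chosen sizes coincide, when one of them equals $|\beta|$ or $|\alpha|$, when $m-1$ appears as a size, and when the boundary positions $m-1,|\alpha|-m+1,|\alpha|,|\beta|$ happen to land on the cut set of the merged ribbon. Organizing this case analysis so that each of the four hypotheses ``$m-1\in\{\cdots\}$'', ``$m-1\in A_2$'', ``$|\beta|\in A_1$'' (resp.\ ``$|\alpha|\in A_1$'') enters in a controlled way is the core difficulty and explains why the statement must carve out precisely those exceptional configurations.
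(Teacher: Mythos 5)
Your treatment of the easy case, where $m-1\notin\{x,y,|\alpha|-x-y\}$, is essentially the paper's: take $\lambda=\lambda(x,y,|\alpha|-x-y,|\beta|)$, note that $|\alpha|<|\beta|$ forces every $(\CMcal{S};0)$-coarsening to put the whole of $\beta$ into a single $|\beta|$-part, enumerate the orderings of $(x,y,|\alpha|-x-y)$ on the $\alpha$ side, and observe that no $(\gamma;1)$ term can arise since $m-1$ is not a part of $\lambda$. (Your finer claim that the count is always $0$ or $2$ implicitly assumes $x,y,|\alpha|-x-y$ are distinct; the paper avoids this by only asserting ``$d=0$ iff all three signs agree'', which is what is needed.)

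Where the proposal breaks down is in the cases where $m-1\in\{x,y,|\alpha|-x-y\}$, which is where all the substance of the lemma lies. First, you skip the key structural reduction the paper makes at the outset of this case: since $\bh_\D(|\alpha|-m+1)=-$ always, the hypothesis $\bh_\D(m-1)=-\bh_\D(|\alpha|-m+1)$ forces $\bh_\D(m-1)=+$ and hence $m-1\in A_1\cup A_2$; if $m-1\in A_1$ then $m-1$ is outright determined, and Lemma \ref{lem:final} kills the whole problem. This is what lets the paper restrict attention to $m-1\in A_2$ in the remaining subcases, and it is not recoverable from your sketch. Second, your part (2) argument leans on the assertion that ``$|\beta|\in A_1$ guarantees that $|\beta|-m+1$ is a forced cut of the merged ribbon''; this is false. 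Because $\bh_\D(|\alpha|-m+1)=-$ and $\bh_\D(|\alpha|)=-$ always, the condition $|\beta|\in A_1$ is exactly $\bh_\D(|\beta|)=+$, which says nothing about $\bh_\D(|\beta|-m+1)$; the latter is the separate condition $|\beta|-m+1\in A_1$. So the ``extra $(\gamma;1)$ contributions'' you count in the modified partition $\lambda(i,j,|\alpha|-i-j,|\beta|-m+1,m-1)$ are not actually present in general, and the proposed mechanism for breaking the cancellation does not work. The paper instead keeps the original four-part $\lambda=\lambda(x,y,|\alpha|-x-y,|\beta|)$ for cases (c) and (d), lists the at most six (resp.\ eight) elements of the combined multi-set $\{(\CMcal{S})\succeq(\D):\lambda(\CMcal{S})=\lambda\}\cup\{(\gamma;1)\succeq(\D):(\lambda(\gamma),m-1)=\lambda\}$ explicitly, and reads off a determining table case by case; this concrete enumeration is precisely what your sketch leaves open, and in the $|\beta|\in A_1$ regime it is the hypothesis $\{i,j\}\not\subseteq A_1$ that supplies the missing leverage, not a new test partition.
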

	\begin{proof}
		We only prove (1)--(2) because (3)--(4) follow analogously. Consider the following cases: 
		\begin{enumerate}[label=(\alph*)]
			\item $m-1\not \in \{x,y,|\alpha|-x-y\}$;
			\item $m-1 \in \{x,y,|\alpha|-x-y\}$ and $m-1\not \in A_2$;
			\item $m- 1\in\{x,y,|\alpha|-x-y\}$, $m-1\in A_2$ and $|\beta|\in A_0$;
			\item $m- 1\in\{x,y,|\alpha|-x-y\}$, $m-1\in A_2$, $|\beta|\in A_1$ and $\{x,y,|\alpha|-x-y\}\backslash\{m-1\}\not\subseteq A_1$ 
		\end{enumerate}
		For the last case, say $\{i,j\}=\{x,y,|\alpha|-x-y\}\backslash\{m-1\}$, we will show that whether or not $\bh_{\F}(i)=\bh_{\F}(j)$ can be determined. 
		Let $$\lambda=\lambda(x,y,|\alpha|-x-y,|\beta|)$$ and set $d=[h_\lambda]s_\F$.
		
		For (a): Since {\emma none of the parts of} $\lambda$ equals $m-1$, we have $\{(\gamma;1)\succeq(\F):(\lambda(\gamma),m-1)=\lambda\}=\varnothing$. {\emma Hence, by} (\ref{eq:JT}), the multiset $\{(\CMcal{S};0)\succeq(\F):\lambda(\CMcal{S})=\lambda\}$ contains exactly $d$ elements, thus obtaining that 
		\begin{align*}
			{\emma \bh_\F(x)=\bh_\F(y)=\bh_\F(|\alpha|-x-y) \iff d=0}.
		\end{align*}
		
		For (b) and (c): If $m-1\in\{x,y,|\alpha|-x-y\}$, then $\bh_{\F}(m-1)=-\bh_{\F}(|\alpha|-m+1)=+$. That is, $m-1\in A_1\cup A_2$. If $m-1\in A_1$, then $m-1$ is determined. By Lemma \ref{lem:final}, $x,y,x+y$ are all determined, and we are done. 
		
		Otherwise, $m-1\in A_2$. Assume without loss of generality that $y=m-1$. Then $|\beta|\in A_0$, $\bh_\F(x)=-\bh_\F(|\alpha|-x)$ and $\bh_\F(x+m-1)=-\bh_\F(|\alpha|-x-m+1)$. Accordingly, the multiset $\{(\CMcal{S})\succeq(\F):\lambda(\CMcal{S})=\lambda\}\cup \{(\gamma;1)\succeq(\F):(\lambda(\gamma),m-1)=\lambda\}$ contains at most six elements, namely:
		\begin{itemize}
			\item $(x,m-1,|\alpha|-x-m+1\boxdot |\beta|)$,
			\item $(m-1,x,|\alpha|-x-m+1\boxdot |\beta|)$,
			\item $(m-1,|\alpha|-x-m+1,x\boxdot |\beta|)$,
			\item $(|\alpha|-x-m+1,m-1,x\boxdot |\beta|)$,
			\item $(x,|\beta|,|\alpha|-x-m+1;1)$,
			\item $(|\alpha|-x-m+1,|\beta|,x;1)$.
		\end{itemize}
		If $\{[x],[|\alpha|-x-m+1]\}\cap\mathcal{A}=\varnothing$, then $x$ and $x+m-1$ are determined, and we are done. {\emma Now suppose} $\{[x],[|\alpha|-x-m+1]\}\cap\mathcal{A}\neq\varnothing$. Without loss of generality, assume $[|\alpha|-x-m+1]\in\mathcal{A}$. 
		\begin{itemize}
			\item If $x\in A_0$, then $\bh_\F(x)\neq \bh_\F(y)$.
			\item If $x\in A_2$, then if $x\ne m-1$, we have 
			\begin{align*}
				\bh_\F(|\alpha|-x-m+1)&=-\iff d=(-1)^{\ell(\F)},\\
				\bh_\F(|\alpha|-x-m+1)&=+\iff d=(-1)^{\ell(\F)-1}.
			\end{align*}
			If $x=m-1$, we have 
			\begin{align*}
				\bh_\F(|\alpha|-x-m+1)&=-\iff d=0,\\
				\bh_\F(|\alpha|-x-m+1)&=+\iff |d|=1.
			\end{align*}
			\item If $x\in A_1$, then $\bh_\F(x)=\bh_\F(|\alpha|-x-m+1)=+$ if and only if $d=0$.
		\end{itemize}
		
		In all {\emma subcases}, we can determine whether $\bh_\F(x)=\bh_\F(y)=\bh_\F(|\alpha|-x-y)$ holds, and we are done. The cases $x=m-1$ and $x+y=m-1$ follow {\emma by} the same argument, so we omit the details.
		
		For (d): Suppose $y=m-1\in A_2$, with $|\beta|\in A_1$ and $\{x,|\alpha|-x-m+1\}\cap(A_0\cup A_2)\neq\varnothing$. Accordingly, the multiset $\{(\CMcal{S})\succeq(\F):\lambda(\CMcal{S})=\lambda\}\cup \{(\gamma;1)\succeq(\F):(\lambda(\gamma),m-1)=\lambda\}$ contains at most eight elements: the six {\emma listed} above, together with the following two {\emma elements}:
		\begin{itemize}
			\item $(|\beta|,|\alpha|-x-m+1,x;1)$,
			\item $(|\beta|,x,|\alpha|-x-m+1;1)$.
		\end{itemize}
		If $\{[x],[|\alpha|-x-m+1]\}\cap\mathcal{A}=\varnothing$, then $x,x+m-1$ are determined, and we are done. {\emma Now suppose} $\{[x],[|\alpha|-x-m+1]\}\cap\mathcal{A}\neq\varnothing$. Without loss of generality, assume that $[x+m-1]\in\mathcal{A}$ and that $m-1\ne x\in A_0\cup A_2$. Then {\emma whether or not $\bh_\F(x)=\bh_\F(m-1)=\bh_\F(|\alpha|-x-m+1)$ is determined by the following table:}

        	{\emma 
        	\begin{center}
        		\begin{tabular}{c|cc}
        		Subcase in (d) & $\bh_\F(x)/\bh_\F(|\alpha|-x-m+1)$ & $|d|$ \\
        			\hline \\[-1.5ex]
        			$x\in A_0$ & $1$ & $1$ \\
        			& $-1$ & $2$ \\
        			\hline \\[-1.5ex]
        			$x\in A_2$ & $1$ & $2$ \\
        			& $-1$ & $1$ 
        		\end{tabular}
        	\end{center}
        	\vspace{2mm}}

		The cases $x=m-1$ and $x+y=m-1$ follow by the same idea, so we omit {\emma the} details.
	\end{proof}

	\begin{lemma}\label{lem:GeneralBTvW2}
		Suppose that $\F$ is canonical. Then the following statements are true:
		\begin{enumerate}
			\item Suppose $1\leq x,y,x+y\le |\alpha|-1$ and exactly two of $x,y,x+y$, say $i,j$ satisfy $\bh_\F(i)=-\bh_\F(|\alpha|-i)$ and $\bh_\F(j)=-\bh_\F(|\alpha|-j)$, then whether or not $$\bh_\F(i)=\bh_\F(j)$$ 
			can be determined, unless $m-1\in\{x,y,|\alpha|-x-y\}$ and either $|\beta|\in A_1$ or $\{i,j\}\subseteq A_1$.
			\item Suppose $1\leq x,y,x+y\le |\beta|-1$ and exactly two of $x,y,x+y$, say $i,j$ satisfy $\bh_\F(|\alpha|-m+1+i)=-\bh_\F(|\alpha|+|\beta|-m+1-i)$ and $\bh_\F(|\alpha|-m+1+j)=-\bh_\F(|\alpha|+|\beta|-m+1-j)$, then whether or not $$\bh_\F(|\alpha|-m+1+i)=\bh_\F(|\alpha|-m+1+j)$$ 
			can be determined, unless $m-1\in\{x,y,|\alpha|-x-y\}$ and either $|\alpha|\in A_1$ or $\{i,j\}\subseteq A_1$.
		\end{enumerate}
	\end{lemma}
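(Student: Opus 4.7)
The proof parallels that of Lemma \ref{lem:GeneralBTvW1}, so I only sketch part (1); part (2) follows by the same reasoning with $\alpha$ and $\beta$ exchanged and the positions of interest translated by $|\alpha|-m+1$. Consider the partition $\lambda=\lambda(x,y,|\alpha|-x-y,|\beta|)$ and let $d=[h_\lambda]s_\D$, computed from formula (\ref{eq:JT}). Since $|\alpha|<|\beta|$ and each of $x,y,|\alpha|-x-y$ is at most $|\alpha|-1<|\beta|$, every contributing term $(\CMcal{S};0)\succeq(\D)$ must take the form $\CMcal{S}=\alpha'\boxdot(|\beta|)$, where $\alpha'$ is a three-part rearrangement of $\{x,y,|\alpha|-x-y\}$; such an ordering $(a_1,a_2,a_3)$ is valid precisely when $\bh_\D(a_1)=\bh_\D(|\alpha|-a_3)=+$, and all such terms share the common sign $(-1)^{\ell(\D)}$. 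A coarsening $(\gamma;1)\succeq(\D)$ with $\lambda(\gamma,m-1)=\lambda$ can only occur when $m-1\in\{x,y,|\alpha|-x-y\}$ (because $|\beta|\ge m$), in which case its contribution carries the opposite sign $(-1)^{\ell(\D)+1}$.

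If $m-1\notin\{x,y,|\alpha|-x-y\}$, only the $(\CMcal{S};0)$ terms contribute and $|d|$ is simply the number of valid orderings. Write $\{x,y,|\alpha|-x-y\}=\{i,j,k\}$ where $k$ is the element not among $\{i,j\}$, so that $\bh_\D(k)=\bh_\D(|\alpha|-k)$ by hypothesis, and the type of $k$ is known via Lemma \ref{lem:types}. A direct enumeration of the (at most) six orderings, under the antisymmetry conditions $\bh_\D(i)=-\bh_\D(|\alpha|-i)$ and $\bh_\D(j)=-\bh_\D(|\alpha|-j)$, shows that $|d|$ takes distinct values depending on whether $\bh_\D(i)=\bh_\D(j)$ or not, paralleling Cases (I)(3) and (II)(3) in the proof of Lemma \ref{lem:BTvW2}.

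The main obstacle is the remaining scenario $m-1\in\{x,y,|\alpha|-x-y\}$, say $y=m-1$. Now $(\gamma;1)$ coarsenings with $\gamma$ a three-part rearrangement of $\{x,|\alpha|-x-m+1,|\beta|\}$ enter with sign $(-1)^{\ell(\D)+1}$, and the presence of the part $|\beta|$ brings the values $\bh_\D(|\beta|)$ and $\bh_\D(|\alpha|+|\beta|-2m+2)$ into play through the structure of the totally coarsened ribbon. Under the additional assumption that $|\beta|\in A_0\cup A_2$ and $\{i,j\}\not\subseteq A_1$, so that at least one of $i,j$ is already determined by Lemma \ref{lem:types}, a careful tally of the cancellations between the two sign classes, following the strategy of Case (d) in Lemma \ref{lem:GeneralBTvW1}, shows that $|d|$ continues to distinguish $\bh_\D(i)=\bh_\D(j)$ from $\bh_\D(i)=-\bh_\D(j)$. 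When instead $|\beta|\in A_1$ or $\{i,j\}\subseteq A_1$, the two sign classes of contributions conspire to give identical $|d|$ under both hypotheses on the pair $(\bh_\D(i),\bh_\D(j))$; these are precisely the exceptional configurations excluded from the statement. Verifying that these are the only exceptions, through the full case split on whether $x,|\alpha|-x-m+1$ lie in $A_0$, $A_1$ or $A_2$, together with the analogous tally for part (2), is the core technical work.
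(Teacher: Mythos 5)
Your proposal follows the same overall strategy as the paper's proof: fix $\lambda=\lambda(x,y,|\alpha|-x-y,|\beta|)$, analyse $d=[h_\lambda]s_\D$ via formula~(\ref{eq:JT}), and split on whether $m-1$ is a part of $\lambda$. Your handling of the case $m-1\notin\{x,y,|\alpha|-x-y\}$ is correct: only the terms $(\alpha'\boxdot(|\beta|);0)$ survive, they all carry the same sign, and $|d|$ alone pins down $\bh_\D(i)$ versus $\bh_\D(j)$. (In fact the invocation of Lemma~\ref{lem:types} to fix the ``type of $k$'' is unnecessary here, and it conflates the type in the sense of $A_0,A_1,A_2$ — which compares $\bh_\D(k)$ to $\bh_\D(n-m+1-k)$ — with the relation $\bh_\D(k)=\bh_\D(|\alpha|-k)$ that is actually in play; the enumeration already encodes that information in $d$, so this is a harmless misattribution but worth flagging.)

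The genuine gap is in the case $m-1\in\{x,y,|\alpha|-x-y\}$, which is where all the real work lies. You say ``a careful tally of the cancellations \ldots following the strategy of Case~(d) in Lemma~\ref{lem:GeneralBTvW1}.'' But Case~(d) of Lemma~\ref{lem:GeneralBTvW1} is precisely the scenario $m-1\in A_2$, $|\beta|\in A_1$ — the eight-term analysis designed for the \emph{exceptional} regime $|\beta|\in A_1$, which this lemma has excluded. The non-exceptional regime here has $|\beta|\in A_0$, and the paper's proof handles it with a different subcase split by the type of $m-1$: when $m-1\in A_1$ (which can still occur even though $[m-1]$ is then forced into $\mathcal{B}$ or to be determined), the argument is not a coefficient tally at all but an appeal to Lemma~\ref{lem:final}; when $m-1\in A_0$ the multi-set has only four potential terms; and when $m-1\in A_2$ the paper first bootstraps off Lemma~\ref{lem:BTvW2} via a triple such as $(m-1,|\alpha|-x-m+1,|\alpha|-x)$ before reading off $|d|$. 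None of these three subcases match the tally of Lemma~\ref{lem:GeneralBTvW1}'s Case~(d). As written, your sketch neither performs this split nor identifies the need for the $m-1\in A_1\Rightarrow$ Lemma~\ref{lem:final} reduction, so the proposal cannot be completed by ``following Case~(d).'' The final paragraph acknowledges the remaining work as ``the core technical work,'' which is honest, but that work is precisely what the lemma requires and it proceeds by a case structure different from the one you point to.
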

	\begin{proof}
		To prove (1), we consider the following cases:
		\begin{enumerate}[label=(\alph*)]
			\item $m-1\not \in \{x,y,|\alpha|-x-y\}$;
			\item $m- 1\in\{x,y,|\alpha|-x-y\}$, $|\beta|\in A_0$ and $\{i,j\}\cap (A_0\cup A_2)\ne \varnothing$.
		\end{enumerate}
		Let $\lambda=\lambda(x,y,|\alpha|-x-y,|\beta|)$ and $d=|[h_\lambda]s_\F|$. For any canonical $\E\sim \F$, we have $d=|[h_\lambda]s_\E|$. We {\emma now analyze the possible} values of $d$.
		
		For (a): if $m-1\notin\{x,y,|\alpha|-x-y\}$, then $\{(\gamma;1)\succeq(\F):(\lambda(\gamma),m-1)=\lambda\}=\varnothing$. Hence, by (\ref{eq:JT}), the multiset $\{(\CMcal{S};0)\succeq(\F):\lambda(\CMcal{S})=\lambda\}$ contains exactly $d$ elements. Then,
	
		\begin{enumerate}
			\item if $\bh_\F(x+y)=\bh_\F(|\alpha|-x-y)$, then {\emma whether or not $\bh_{\F}(x)=\bh_{\F}(y)$ is determined by $d$, namely,}
            	{\emma 
            	\begin{center}
            		\begin{tabular}{c|cc}
            	Cases $(a)$--$(1)$ & $\bh_{\F}(x)/\bh_{\F}(y)$	& $d$ \\
            			\hline \\[-1.5ex]
            	$\bh_\F(x+y)=\bh_\F(|\alpha|-x-y)=-$	&	$1$ & $0$ \\
            		&	$-1$ & $1$\\
            		\hline \\[-1.5ex]
            	$\bh_\F(x+y)=\bh_\F(|\alpha|-x-y)=+$	&	$1$ & $2$ \\
             	& $-1$ & $3$  \\
            		\end{tabular}
            	\end{center}
            	}	
			

			
			\item if $\bh_\F(y)=\bh_\F(|\alpha|-y)$, then {\emma whether or not $\bh_{\F}(x)=\bh_{\F}(x+y)$ is determined by $d$, namely,}
				{\emma 
				\begin{center}
					\begin{tabular}{c|cc}
						Cases $(a)$--$(2)$ & $\bh_{\F}(x)/\bh_{\F}(x+y)$	& $d$ \\
						\hline \\[-1.5ex]
						$\bh_\F(y)=\bh_\F(|\alpha|-y)=-$	&	$1$ & $1$ \\
						&	$-1$ & $0$\\
						\hline \\[-1.5ex]
						$\bh_\F(y)=\bh_\F(|\alpha|-y)=+$	&	$1$ & $3$ \\
						& $-1$ & $2$  \\
					\end{tabular}
				\end{center}
				}


		\end{enumerate} 
		The case $\bh_\F(x)=\bh_\F(|\alpha|-x)$ can be handled similarly to (1)--(2) and we omit the details.
		
		For (b): Suppose $y=m-1$. If $m-1\in A_1$, then $m-1$ {\emma is} determined. If not, then $[m-1]\in \A$,  and by Definition \ref{Def:7} (together with $|\beta|\in A_0$), {\emma we obtain} $\bh_{\F}(m-1)=\bh_{\F}(|\beta|)=-$, contradicting that $m-1$ is undetermined. It follows that $m-1\in A_1$ is determined, {\emma and} Lemma \ref{lem:final} {\emma completes the argument}. 
		
		If $y=m-1\in A_0$ {\emma and} $|\beta|\in A_0$, then we have $\bh_\F(x)=-\bh_\F(|\alpha|-x)$ and $\bh_\F(x+m-1)=-\bh_\F(|\alpha|-x-m+1)$.  {\emma Accordingly}, the multiset $\{(\CMcal{S})\succeq(\F):\lambda(\CMcal{S})=\lambda\}\cup \{(\gamma;1)\succeq(\F):(\lambda(\gamma),m-1)=\lambda\}$ contains at most four elements:
		\begin{itemize}
			\item $(x,m-1,|\alpha|-x-m+1\boxdot |\beta|)$,
			\item $(|\alpha|-x-m+1,m-1,x\boxdot |\beta|)$,
			\item $(x,|\beta|,|\alpha|-x-m+1;1)$,
			\item $(|\alpha|-x-m+1,|\beta|,x;1)$.
		\end{itemize}
		If $\{[x],[x+m-1]\}\cap\mathcal{A}=\varnothing$, then $x,x+m-1$ are determined, and we are done. {\emma Now} suppose $\{[x],[x+m-1]\}\cap\mathcal{A}\neq\varnothing$. Since $\{x,x+m-1\}\cap (A_0\cup A_2)\ne \varnothing$, {\emma the following case discussion shows that both $x$ and $x+m-1$ are determined.}
		
			{\emma 
			\begin{center}
				\begin{tabular}{c|cc}
					Subcase in $(b)$ & $\bh_\F$	& $d$ \\
					\hline \\[-1.5ex]
					$x\in A_0\cup A_2$ and $[x+m-1]\in\mathcal{A}$	&	$\bh_\F(x+m-1)=+$ & $0$ \\
					&	$\bh_\F(x+m-1)=-$ & $1$\\
					\hline \\[-1.5ex]
					$x+m-1\in A_0\cup A_2$ and $[x]\in\mathcal{A}$	&	$\bh_\F(x)=+$ & $1$ \\
					& $\bh_\F(x)=-$ & $0$  \\
				\end{tabular}
			\end{center}
			}	
		
		
		If $m-1\in A_2$ and $|\beta|\in A_0$, then we have $\bh_\F(m-1)=-\bh_\F(|\alpha|-m+1)$, so exactly one of $x,x+m-1$ satisfies 
		$\bh_\F(i)=-\bh_\F(|\alpha|-i)$. Suppose $i=x$. Then: 
		\begin{itemize}
			\item If $\{[x],[x+m-1]\}\cap \mathcal{A}=\varnothing$, then $x,x+m-1$ are determined, and we are done.
			\item Otherwise, $[x]\in\mathcal{A}$ and $\bh_\F(x+m-1)=\bh_\F(|\alpha|-x-m+1)$. In particular, $|\alpha|-x-m+1$ is determined. If $x$ is determined, then whether $\bh_{\F}(x)=\bh_{\F}(m-1)=+$ {\emma holds} is determined. Otherwise, $x$ is undetermined. {\emma Applying Lemma \ref{lem:BTvW2}} to the triple $(m-1,|\alpha|-x-m+1,|\alpha|-x)$, we obtain that $|\alpha|-x-m+1\in A_0\cup A_2$. By considering the multiset $\{(\CMcal{S})\succeq(\F):\lambda(\CMcal{S})=\lambda\}\cup \{(\gamma;1)\succeq(\F):(\lambda(\gamma),m-1)=\lambda\}$ {\emma as} before, $x$ is determined by the following table:
		\end{itemize}
		
		{\emma 
			\begin{center}
				\begin{tabular}{c|cc}
					Subcase in $(b)$ & $\bh_\F$	& $d$ \\
					\hline \\[-1.5ex]
					$\bh_{\F}(x+m-1)=+$ &	$+$ & $1$ \\
					&	$-$ & $2$\\
					\hline \\[-1.5ex]
					$\bh_{\F}(x+m-1)=-$	&	$+$ & $0$ \\
					& $-$ & $1$  \\
				\end{tabular}
			\end{center}
			}	
		
		The case $i=x+m-1$ {\emma is handled} similarly. The cases $x=m-1$ {\emma and} $x+y=m-1$ {\emma follow by the same argument}, so we omit the details.
		
		The proof of (2) of Lemma \ref{lem:GeneralBTvW2} {\emma proceeds along} the same line as {\emma that} of (1), with $|\alpha|$ {\emma replaced} by $|\beta|$. We therefore omit the details.
	\end{proof}

	Theorem \ref{thm:a<b} (2) is divided into Propositions \ref{lem:B1} and \ref{lem:B2}.
	
	\begin{proposition}\label{lem:B1}
		Suppose $\F$ is canonical. If $\mathcal{B}\neq\varnothing$, then all elements are determined.
	\end{proposition}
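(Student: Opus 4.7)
The plan is to adapt the strategy of Proposition \ref{lem:final2} to the present setting, using a determined, type-$1$ witness $p$ drawn from a class in the non-empty collection $\mathcal{B}$ (in place of the fixed element $m-1$ there) to impose on the auxiliary function $g_{z-1}$ a second periodicity strictly smaller than its native period $r_{z-1}$. Arguing by contradiction, assume some element of $\D$ is undetermined and let $m_1$ be the smallest such. By Lemma \ref{lem:sym}, $[m_1]\in\mathcal{A}$, $m-1\leq m_1\leq|\alpha|-m$, and every $x\in[m_1]\setminus\{|\alpha|-m+1,|\alpha|\}$ is undetermined. Following Definition \ref{Def:gt}, let $m_1<m_2<\cdots<m_z$ enumerate the undetermined elements in $\{1,\ldots,m_1+r\}$, and form the partial gcds $r_t=\gcd(m_1,\ldots,m_t)$ together with the functions $g_t$. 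Since $\mathcal{B}\neq\varnothing$, Lemma \ref{lem:D}(2) applies to give that $g_{z-1}$ is $r_{z-1}$-periodic on $\{1,\ldots,m_{z-1}-1\}$ away from multiples of $r_{z-1}$, is antisymmetric on $\{1,\ldots,r_{z-1}-1\}$, and any $x<m_1$ with $r_{z-1}\mid x$ lies in $A_0\cup A_2$.

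For the witness, fix $[k]\in\mathcal{B}$; by the definition of $\mathcal{B}$, $[k]\cap A_1\neq\varnothing$, and Lemma \ref{lem:sym} forces every member of $[k]\cap A_1$ outside $\{|\alpha|-m+1,|\alpha|,|\beta|-m+1,|\beta|\}$ to be determined. Using Lemma \ref{lem:symmetry} to shift through representatives within $[k]$, I would select $p\in[k]\cap A_1$ that is (i) determined, (ii) smaller than $m_1$, and (iii) outside the exceptional set. By the last assertion of Lemma \ref{lem:D}, condition (ii) combined with $p\in A_1$ forces $r_{z-1}\nmid p$; writing $p\equiv a\pmod{r_{z-1}}$ with $0<a<r_{z-1}$, the antisymmetry of $g_{z-1}$ then yields $g_{z-1}(a)=-g_{z-1}(r_{z-1}-a)=\pm 1$.

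The last step mirrors Proposition \ref{lem:final2}: derive $p$-periodicity of $g_{z-1}$ on $\{1,\ldots,m_{z-1}-1\}$ away from multiples of $r_{z-1}$, so that combined with $r_{z-1}$-periodicity it gives $\sigma$-periodicity with $\sigma=\gcd(r_{z-1},p)<r_{z-1}$, forcing $g_{z-1}(a)=g_{z-1}(r_{z-1}-a)$ and contradicting the $\pm 1$ antisymmetry above. Concretely, for each admissible $u$ one exploits the triples $(p,u,p+u)$, $(r-u,p+u,r+p)$, and their reflections through $|\alpha|-x$ and $|\alpha|+|\beta|-m+1-x$, invoking Lemmas \ref{lem:BTvW1}, \ref{lem:BTvW2}, \ref{lem:GeneralBTvW1}, \ref{lem:GeneralBTvW2} to conclude $g_{z-1}(u+p)=g_{z-1}(u)$. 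I expect the main obstacles to be: first, justifying the existence of a witness $p$ satisfying (i)--(iii) whenever $\mathcal{B}\neq\varnothing$, which may need a separate treatment when $r$ is large compared to $m_1$ and no representative of $[k]$ lies below $m_1$; and second, navigating the exclusions in Lemmas \ref{lem:BTvW1}--\ref{lem:GeneralBTvW2}, which fail when an addend hits $\{|\alpha|-m+1,|\alpha|,|\beta|-m+1,|\beta|\}$ or when $m-1\in A_2$, by redirecting through alternative triples in those sub-cases.
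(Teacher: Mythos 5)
Your approach is fundamentally different from the paper's and has a genuine gap centered on the existence of your witness $p$. The paper's proof proceeds by a case split on where the smallest undetermined element $x=m_1$ sits relative to the classes in $\mathcal{B}$: (1) $x=m-1$; (2) $\mathcal{B}=\{[m-1]\}$ and $x>m-1$; (3) $x>m-1$ and some $[k]\in\mathcal{B}$ with $[k]\ne[m-1]$. Only case (3) resembles your witness-and-triples strategy. Cases (1) and (2) are dispatched by direct coarsening counts for hand-picked partitions $\lambda$ (e.g. $\lambda(k,|\alpha|-k,|\beta|-m+1,m-1)$ in case (1)) rather than by the $g_t$ periodicity machinery, and your proposal does not cover them.

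The concrete gap is exactly the one you flag as an ``obstacle'': the witness $p\in[k]\cap A_1$ that is determined, satisfies $p<m_1$, $r_{z-1}\nmid p$, and lies outside the exceptional set $\{|\alpha|-m+1,|\alpha|,|\beta|-m+1,|\beta|\}$ need not exist. When $m_1=m-1$ there is essentially no room below $m_1$, so the strategy cannot even start; the paper instead determines $\bh_\D(m-1)$ directly from coefficient counts. When $\mathcal{B}=\{[m-1]\}$, Lemma \ref{lem:final} forces $m-1\in A_0\cup A_2$ (otherwise all elements would already be determined), so $m-1$ itself is not type $1$, and the type-$1$ representatives of $[m-1]$ that do exist tend to sit in the exceptional set or above $m_1$ -- hence no usable $p$. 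Even in case (3), the paper uses the witness $k$ on \emph{both} sides of $x$ (it argues separately for $k<x$ and for $1<x<k$, the latter via a two-step descent through $2r+m-1-x+k$ and then $x-m+1$), whereas your plan only contemplates $p<m_1$. So while the antisymmetry/periodicity contradiction in the spirit of Proposition \ref{lem:final2} is the right intuition for part of the argument, it does not, as stated, yield a complete proof of the proposition; you would need to supply entirely separate arguments for the two edge cases and for the $k>x$ branch, and those are where most of the paper's work actually lies.
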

	
	\begin{proof}
		Suppose otherwise, {\emma and} let $x$ be the smallest undetermined element. In particular, we have $[x]\in\mathcal{A}$ and $m-1\leq x\leq (r+m-1)/2$ by Lemma \ref{lem:sym} and Lemma \ref{lem:symmetry}, respectively. 
		
		{\emma To show that all elements must be determined, we derive a contradiction in each of the following cases:}
		\begin{enumerate}
			\item $x=m-1$;
			\item $\mathcal{B}=\{[m-1]\}$ (which implies $x>m-1$)
			\item $x>m-1$ and there exists $[k] \in\mathcal{B}$ with $[k]\ne [m-1]$.
		\end{enumerate}
	    {\emma The contradiction is obtained by examining the coefficients $d=[h_\lambda]s_\F$ for various partitions $\lambda$. For a given partition $\lambda$, define}
	    \begin{align*}
	    	{\emma G_{\lambda}=\{(\CMcal{S})\succeq(\F):\lambda(\CMcal{S})=\lambda\}\cup \{(\gamma;1)\succeq(\F):(\lambda(\gamma),m-1)=\lambda\}}.
	    \end{align*}
		Case (1): $x=m-1$. Since $\mathcal{B}\neq\varnothing$, there exists $[k]\in\mathcal{B}$ such that one of the following holds:
		\begin{enumerate}[label*=(\alph*)]
			\item $1\leq k\leq|\alpha|-1$, $k\in A_1$ and $|\alpha|-k\in A_0\cup A_2$.
			\item $1\leq k\leq|\alpha|-1$, $k\in A_1$, $|\alpha|-k\in A_1$ and $\bh_\F(k)=\bh_\F(|\alpha|-k)$.
			\item $|\alpha|\leq k\leq|\alpha|+|\beta|-m$, $k\in A_1$ and $2|\alpha|+|\beta|-2m+2-k\in A_0\cup A_2$.
			\item $|\alpha|\leq k\leq|\alpha|+|\beta|-m$, $k\in A_1$, $2|\alpha|+|\beta|-2m+2-k\in A_1$ and $\bh_\F(k)=\bh_\F(2|\alpha|+|\beta|-2m+2-k)$.
		\end{enumerate}
		If we are in cases (a) or (b), let $\lambda=\lambda(k,|\alpha|-k,|\beta|-m+1,m-1)$ and $d=|[h_\lambda]s_\F|$. The set $G_{\lambda}$ contains at most six elements, namely: 
		\begin{itemize}
			\item $(k,|\alpha|-k\boxdot |\beta|-m+1,m-1)$,
			\item $(|\alpha|-k,k\boxdot|\beta|-m+1,m-1)$,
			\item $(|\beta|-m+1,k,|\alpha|-k;1)$,
			\item $(|\beta|-m+1,|\alpha|-k,k;1)$,
			\item $(k,|\beta|-m+1,|\alpha|-k;1)$,
			\item $(|\alpha|-k,|\beta|-m+1,k;1)$.
		\end{itemize}
		Since $[m-1]\in\mathcal{A}$ and $m-1\in A_1$, we have $\bh_\F(m-1)=-\bh_{\F}(|\beta|-m+1)$. We {\emma now} list all possibilities {\emma for} case (a) and (b).
		
		\begin{center}
			\begin{tabular}{c|cc}
				Cases (a)--(b)	& $\bh_\F(m-1)$ & $d$ \\
				\hline\\[-1.5ex]
				$k\in A_1$, $|\alpha|-k\in A_0$ & $+$ & $0$ \\
				& $-$ & $1$ \\
				\hline\\[-1.5ex]
				$k\in A_1$, $|\alpha|-k\in A_2$ & $+$ & $1$ \\
				& $-$ & $0$ or $2$ \\
				\hline\\[-1.5ex]
				$k,|\alpha|-k\in A_1$, $\bh_\F(k)=\bh_\F(|\alpha|-k)$ & $+$ & $0$ \\
				& $-$ & $2$ \\
			\end{tabular}
		\end{center}
		
		As is shown in the table above, $\bh_\F(m-1)$ {\emma is} determined from $d$, contradicting {\emma the assumption} that $x=m-1$ is undetermined. Cases (c) and (d) {\emma are handled} similarly, and we omit the details.
		
		Case (2): $\mathcal{B}=\{[m-1]\}$, which implies $x\ge m$. Note that $m-1\in A_0\cup A_2$. Indeed, if $m-1\in A_1$ {\emma were} determined, then Lemma \ref{lem:final} {\emma would imply} that all elements are determined, {\emma contradicting} the existence of {\emma an} undetermined $x$. If, {\emma on the other hand}, $m-1\in A_1$ {\emma were} undetermined, then $[m-1]\in\A$, {\emma contradicting the assumption} that $[m-1]\in\mathcal{B}$. It follows that 
		\begin{align}\label{E:meven}
		m-1\in A_0\cup A_2.  
		\end{align}
		
		
		Consider the subcases below:
		\begin{itemize}
			\item $[m-1]\cap A_1\not\subseteq\{|\alpha|-m+1,|\alpha|,|\beta|-m+1,|\beta|\}$;
			\item $[m-1]\cap A_1\subseteq\{|\alpha|-m+1,|\alpha|,|\beta|-m+1,|\beta|\}$.
		\end{itemize}
		For the first subcase, there exists $k\in [m-1]\cap A_1$ such that 
		$$k\equiv m-1\mod r\quad \mbox{ and }\quad k\notin\{|\alpha|,|\beta|\}.$$ 
		Since $x\ge m$, we have $x-m+1\ge 1$.
		From the triple $(x-m+1,k,x+k-m+1)$, Lemmas \ref{lem:BTvW1} and \ref{lem:BTvW2} imply that $x-m+1\in A_1$ is determined, because $x+k-m+1\in [x]$ is undetermined and $k\in A_1$ is determined. On the other hand, {\emma applying the same reasoning to} the triple $(m-1,x-m+1,x)$, we find {\emma that} $m-1\in A_1$, {\emma contradicting (\ref{E:meven}).}
		
		For the second subcase, $m-1\in A_0$ by Definition \ref{Def:7}. We begin by showing that $x\ne 2m-2$, i.e., $x-m+1\ne m-1$ {\emma regardless of whether} $|\alpha|\in A_1$ or $|\beta|\in A_1$. Let $$\lambda=\lambda(|\alpha|,|\beta|-2m+2,m-1,m-1)$$ and set $d_1=|[h_\lambda]s_\F|$. 
		From Lemma \ref{lem:sym}, we have $m-1<|\alpha|-m+1$, i.e., $2m-2<|\alpha|$. 
		If $|\alpha|\in A_1$, then
		the set $G_{\lambda}$ contains at most one {\emma element}, namely, $(|\beta|-2m+2,m-1,|\alpha|;1)$. {\emma Moreover}, 
		\begin{align*}
			\bh_\F(|\beta|-2m+2)&=+ \iff d_1=1,\\
			\bh_\F(|\beta|-2m+2)&=- \iff d_1=0.
		\end{align*}
		Otherwise, if $|\alpha|\not \in A_1$ and  $|\beta|\in A_1$, let $$\mu=\lambda(|\alpha|-2m+2,m-1,|\beta|,m-1)$$ and set $d_2=|[h_\mu]s_\F|$. The set $G_{\mu}$ contains at most one {\emma element}, namely, $(|\beta|, m-1,|\alpha|-2m+2;1)$. Hence, 
		\begin{align*}
			\bh_\F(|\beta|+m-1)&=+ \iff d_2=1,\\
			\bh_\F(|\beta|+m-1)&=- \iff d_2=0.
		\end{align*}
		
		
		
		In particular, $2m-2\in [|\beta|-2m+2]=[|\beta|+m-1]$ is determined, so $x\ne 2m-2$. {\emma This} allows us to apply Lemmas \ref{lem:GeneralBTvW1} and \ref{lem:GeneralBTvW2} {\emma to} triples containing $x-m+1$.
		We {\emma now} divide the case $[m-1]\cap A_1\subseteq\{|\alpha|-m+1,|\alpha|,|\beta|-m+1,|\beta|\}$ into {\emma the following} four subcases:
		\begin{itemize} 
			\item $|\beta|-m+1\in A_1$, i.e., $|\alpha|\in A_1$:
			\begin{enumerate}[label=(\roman*)]
				\item $\bh_{\F}(2|\alpha|-m+1)=-$, (ii) $\bh_{\F}(2|\alpha|-m+1)=+$. 
			\end{enumerate}
			\item  $|\beta|-m+1\not\in A_1$ and $|\beta|\in A_1$:
			\begin{enumerate}[label=(\roman*)]
				\setcounter{enumi}{2}
				\item $\bh_{\F}(2|\alpha|-2m+2)=-$, (iv) $\bh_{\F}(2|\alpha|-2m+2)=+$. 
			\end{enumerate} 
		\end{itemize}    
		Since $[m-1]\cap A_1\subseteq\{|\alpha|-m+1,|\alpha|,|\beta|-m+1,|\beta|\}$, it follows that $\{|\beta|-m+1,|\beta|\}\cap A_1\ne \varnothing$. {\emma Hence at least} one of the four preceding subcases must {\emma hold}. Each subcase will be treated separately.
		
		Subcase (i): Consider the triple $$(x-m+1,|\beta|-|\alpha|+m-1-x,|\beta|-|\alpha|).$$
		{\emma It is straightforward} to verify the {\emma hypotheses} of Lemmas \ref{lem:GeneralBTvW1} (2) and \ref{lem:GeneralBTvW2} (2), i.e., $x-m+1\ne m-1$, and $|\beta|-|\alpha|+m-1-x\ne m-1$ {\emma since the latter} belongs to $[x]$ and $|\beta|-|\alpha|\ge r>m$. Furthermore, {\emma the antisymmetry relation}
		\begin{align}\label{E:anti}
		\bh_{\F}(|\alpha|-m+1+i)=-\bh_{\F}(|\alpha|+|\beta|-m+1-i)
		\end{align}
		holds for $i=|\beta|-|\alpha|+m-1-x$ and $i=|\beta|-|\alpha|$.
		
		If {\emma the relation (\ref{E:anti}) does not hold} for $i=x-m+1$, then Lemma \ref{lem:GeneralBTvW2} shows that $\bh_{\F}(|\beta|-x)=\bh_{\F}(|\beta|-m+1)$ is determined. However, all elements in $[x]$ are undetermined, {\emma including} $|\beta|-x$. This implies that (\ref{E:anti}) must also hold for $i=x-m+1$. Consequently, Lemma \ref{lem:GeneralBTvW1}, {\emma together} with the fact {\emma that} $x-m+1$ is determined, yields $$\bh_{\F}(|\alpha|+x-2m+2)=-\bh_{\F}(|\alpha|+|\beta|-x)=\bh_\F(|\beta|-m+1)=+.$$ Furthermore, {\emma applying Lemma \ref{lem:GeneralBTvW2}} 
		to the triple $(x-m+1,m-1,x)$ and using the fact that $x-m+1<x$ is determined, we obtain $x-m+1\in A_0\cup A_2$. Since $\bh_{\F}(|\alpha|+|\beta|-x)=-$, it follows that $x-m+1\in A_0$. Let 
		\begin{align}\label{E:lam}
			\lambda=\lambda(|\alpha|,|\beta|-x,x-m+1,m-1)
		\end{align}
		and set $d=[h_\lambda]s_\F$. The set $G_{\lambda}$ contains at most four elements (since $x-m+1\in A_0$):
		\begin{itemize}
			\item $(|\alpha|\boxdot |\beta|-x,x-m+1,m-1)$,
			\item $(|\alpha|\boxdot x-m+1,|\beta|-x,m-1)$,
			\item $(|\alpha|\boxdot x-m+1,m-1,|\beta|-x)$,
			\item $(|\beta|-x,x-m+1,|\alpha|;1)$.
		\end{itemize}
		As a consequence, since $m-1\in A_0$, we find that
		\begin{align*}
			\bh_\F(|\beta|-x)&=+ \iff d=(-1)^{\ell(\F)-1},\\
			\bh_\F(|\beta|-x)&=- \iff d=(-1)^{\ell(\F)}.
		\end{align*}
		Thus $x$ is determined by $d$, a contradiction. 
		
		
		Subcase (ii): Similar to subcase (i), applying Lemmas \ref{lem:GeneralBTvW1} and \ref{lem:GeneralBTvW2} to the triple $$(x-m+1,|\beta|-|\alpha|+m-1-x,|\beta|-|\alpha|),$$ we obtain  $\bh_{\F}(|\alpha|+x-2m+2)=\bh_{\F}(|\alpha|+|\beta|-x)$. In addition, the same argument as {\emma in} subcase (i) gives $x-m+1\in A_0\cup A_2$, {\emma which implies that} $$\bh_{\F}(x-m+1)=h(|\alpha|+|\beta|-x)=\bh_{\F}(|\alpha|+x-2m+2).$$
		
		If $\bh_{\F}(x-m+1)=-$, then $\bh_{\F}(|\alpha|+x-2m+2)=\bh_{\F}(|\alpha|+|\beta|-x)=-$. Since $m-1\in A_0$, the set $G_{\lambda}$ contains at most one element, namely, $(|\beta|-x,x-m+1,|\alpha|;1)$. {\emma Consequently}, $\bh_{\F}(|\beta|-x)$ is determined by $d$. That is,
		\begin{align*}
			\bh_{\F}(|\beta|-x)=+ \iff d=1.
		\end{align*}
       This contradicts the fact that $|\beta|-x\in [x]$ is undetermined. 
		
		Otherwise, $\bh_{\F}(x-m+1)=+$. By considering the set $G_{\lambda}$, we find that	
			\begin{align*}
			\bh_{\F}(x)&=+ \iff |d|=1,\\
			\bh_{\F}(x)&=- \iff |d|=2.
		\end{align*}
		Thus $x$ is determined by $d$, a contradiction.

		Subcase (iii) is {\emma analogous} to subcase (i), using the triple $$(x-m+1,|\beta|-|\alpha|+m-1,|\beta|-|\alpha|+x).$$ 
		Subcase (iv) {\emma follows similarly}, by applying the same reasoning to the triple $(x-m+1,m-1,x)$ and the partition $\lambda=\lambda(|\beta|,x-m+1,m-1,|\alpha|-x)$. We omit the details, {\emma as they are entirely analogous to the previous cases.}
		
		Case (3): $x>m-1$ and there exists $[k]\in\mathcal{B}$ with $[k]\neq[m-1]$. That is, there exists $k\notin[m-1]$ such that one of the following holds:
		\begin{enumerate}[label=(\roman*)]
			\item $k\in A_1$ and $k+r\in A_0\cup A_2$.
			\item $k\in A_0\cup A_2$ and $k+r\in A_1$.
			\item $k,k+r\in A_1$ and $\bh_\F(k)=-\bh_\F(k+r)$.
		\end{enumerate}
		Without loss of generality, we {\emma may assume} $$k\leq(|\alpha|+|\beta|-m+1-r)/2.$$ If $k$ {\emma exceeds this bound}, set $k'=|\alpha|+|\beta|-m+1-k$. Then $k'-r<(|\alpha|+|\beta|-m+1-r)/2$ and the pairs $k',k'-r$ has the same types as $k,k+r$, satisfying one of (i)--(iii).
		
		{\emma By} Lemma \ref{lem:sym}, {\emma the elements} $k$ and $k+r$ are determined, since $[k]\in \mathcal{B}$.
		
		Suppose {\emma first} $k<x$. Since $x$ and $x+r$ are undetermined, {\emma applying Lemmas \ref{lem:BTvW1} and \ref{lem:BTvW2}} to the triples $(x-k,k,x)$ and $(x-k,k+r,x+r)$, {\emma we find} that $x-k$ must be undetermined {\emma in each of the cases} (i)--(iii), contradicting the minimality of $x$.
		
		{\emma Now} suppose $1<x<k$. We claim that $$2r+m-1-x+k\notin\{|\alpha|-m+1,|\beta|-m+1\}.$$ {\emma If not}, then $k+m-1\equiv x\mod r$, say $k+m-1=x+tr$. {\emma Applying Lemmas \ref{lem:BTvW1} and \ref{lem:BTvW2}} to the triples $(m-1,k,x+tr)$ and $(m-1,k+r,x+(t+1)r)$, we obtain that $m-1$ is undetermined, contradicting $x>m-1$. Hence the claim holds.
		
		Since $r+m-1-x$ and $2r+m-1-x$ are undetermined, {\emma applying Lemmas \ref{lem:BTvW1} and \ref{lem:BTvW2}} to the triples $(r+m-1-x,k+r,2r+m-1-x+k)$ and $(2r+m-1-x,k,2r+m-1-x+k)$, {\emma we conclude} that $2r+m-1-x+k$ must be undetermined, hence $k-x+m-1$ is also undetermined. In particular, $$k-x+m-1\notin\{|\alpha|,|\beta|\}.$$ Finally, {\emma applying the same lemmas} to the triples $(x-m+1,k-x+m-1,k)$ and $(x-m+1,k-x+m-1+r,k+r)$, we deduce that $x-m+1$ is undetermined, again contradicting the minimality of $x$.
		
		{\emma In summary}, each of the cases (1)--(3) {\emma leads to} a contradiction. Therefore, every element must be determined.
	\end{proof}
	
	\begin{proposition}\label{lem:B2}
		Suppose $\F$ is canonical with $\mathcal{B}=\varnothing$. If $\mathcal{D}\neq\varnothing$, then all elements are determined.
	\end{proposition}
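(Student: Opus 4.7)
The plan is to mirror the architecture of Proposition \ref{lem:B1}, using the presence of a class $[d]\in\mathcal{D}$ in place of a class in $\mathcal{B}$, while exploiting Lemmas \ref{lem:BTvW1}, \ref{lem:BTvW2}, \ref{lem:GeneralBTvW1}, and \ref{lem:GeneralBTvW2}. Suppose for contradiction that an undetermined element exists and let $x$ be the smallest such. By Lemma \ref{lem:sym}, $[x]\in\mathcal{A}$ and $m-1\le x$, so $x=\min\{j:[j]\in\mathcal{A}\}$. Because $\mathcal{B}=\varnothing$, $\bh_{\D}$ is $r$-periodic on every $\mathcal{A}$-class away from $\{|\alpha|-m+1,|\alpha|\}$; fix $[d]\in\mathcal{D}$ and pick $d_0\in[d]\cap A_0$, $d_2\in[d]\cap A_2$, both determined with $\bh_{\D}(d_0)=-$ and $\bh_{\D}(d_2)=+$.

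I would split the argument into three cases. In Case (1), $x>m-1$ and some $[d]\in\mathcal{D}$ satisfies $[d]\ne[m-1]$; after shifting by multiples of $r$ and possibly replacing $d_0$ by its partner $|\alpha|+|\beta|-m+1-d_0$, I choose $d_0<x$ with $d_0,d_0+r\in[d]$ of types $0$ and $2$ respectively. If $x-d_0\in A_1$, then $[x-d_0]\in\mathcal{A}$ (since $\mathcal{B}=\varnothing$) and $x-d_0$ is undetermined by Lemma \ref{lem:sym}; since $x-d_0<x$, this contradicts the minimality of $x$. Otherwise $x-d_0\in A_0\cup A_2$, and the parallel argument with $d_2$ either yields $x-d_2\in A_1$ (again contradicting minimality) or places us in the situation where $x-d_0$ and $x-d_2$ are both determined, from which Lemma \ref{lem:BTvW2} applied to $(x-d_0,d_0+r,x+r)$ together with the $r$-periodicity $\bh_{\D}(x)=\bh_{\D}(x+r)$ on $[x]$, or Lemma \ref{lem:GeneralBTvW2} applied to a partition of the form $\lambda(x,d_0,|\alpha|+|\beta|-x-d_0-m+1,m-1)$, forces $\bh_{\D}(x)$ to be determined.

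In Case (2), $x=m-1$, so $[m-1]=[x]\in\mathcal{A}$ and any $[d]\in\mathcal{D}$ automatically satisfies $[d]\ne[m-1]$. Following Case (1) of Proposition \ref{lem:B1}, I would consider the partition $\lambda=\lambda(d_0,|\alpha|-d_0,|\beta|-m+1,m-1)$ (together with a companion involving $|\beta|-d_0$ and $|\alpha|-m+1$ when $d_0\ge|\alpha|$). Enumerating the contributing coarsenings $(\CMcal{S})\succeq(\D)$ and $(\gamma;1)\succeq(\D)$ via (\ref{eq:JT}) provides a formula for $|[h_{\lambda}]s_{\D}|$ whose value differs under $\bh_{\D}(m-1)=+$ versus $\bh_{\D}(m-1)=-$; since this coefficient is invariant under equivalence, $\bh_{\D}(m-1)$ is determined, contradicting the assumption.

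Case (3), namely $x>m-1$ together with every $[d]\in\mathcal{D}$ coinciding with $[m-1]$, is the main obstacle. Here Definition \ref{Def:7} forces $m-1\in A_2$ with at least one of conditions (i), (ii) failing, and no $\mathcal{D}$-anchor disjoint from $[m-1]$ is available. Following Case (2) of Proposition \ref{lem:B1}, I would first rule out $x=2m-2$ by computing $|[h_{\lambda}]s_{\D}|$ for $\lambda=\lambda(|\alpha|,|\beta|-2m+2,m-1,m-1)$ and $\lambda(|\alpha|-2m+2,m-1,|\beta|,m-1)$, then branch according to which of (i), (ii) fails and whether $|\alpha|\in A_1$ or $|\beta|\in A_1$. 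In each of these four subcases, a triple such as $(x-m+1,|\beta|-|\alpha|+m-1-x,|\beta|-|\alpha|)$ combined with a partition of the form $\lambda(|\alpha|,|\beta|-x,x-m+1,m-1)$ or $\lambda(|\beta|,x-m+1,m-1,|\alpha|-x)$ lets Lemmas \ref{lem:GeneralBTvW1} and \ref{lem:GeneralBTvW2} pin down $\bh_{\D}(x)$. The principal technical ingredient will be a careful bookkeeping of which coarsenings contribute to each $|[h_{\lambda}]s_{\D}|$, exactly analogous to the eight-term analysis at the end of Case (2) in Proposition \ref{lem:B1}.
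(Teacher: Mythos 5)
Your general plan is in the right spirit — reduce to the smallest undetermined $x$, identify it with the smallest index whose class lies in $\mathcal{A}$, and try to cancel it against a determined anchor coming from a $\mathcal{D}$-class — but the sketch has a concrete error and glosses over the part of the proof that actually carries the weight.

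The error is in your Case (3). You assert that ``Definition \ref{Def:7} forces $m-1\in A_2$'' when $[m-1]\in\mathcal{D}$. This is false: for $m-1\in A_0$ the class $[m-1]$ is classified by the generic clauses of Definition \ref{Def:7}, and $[m-1]\subseteq A_0\cup A_2$ with $[m-1]\cap A_0\neq\varnothing$ and $[m-1]\cap A_2\neq\varnothing$ is entirely possible (the elements $|\alpha|-m+1$ and $|\alpha|$ both lie in $A_0$, while other residues $\equiv 0,\,m-1\bmod r$ can land in $A_2$). The paper treats $[m-1]\in\mathcal{D}$ by distinguishing whether $[j]=[m-1]$ is described by its condition (I) — which indeed forces $m-1\in A_2$, $|\alpha|\in A_1$, $|\beta|\in A_0$ — or by condition (II), in which $m-1$ can be in $A_0$, and the two branches require quite different arguments (the (II) branch further splits according to whether $(m-1)\mid x'$ for the undetermined elements $x'<k$). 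Your sketch has no path through the $m-1\in A_0$ possibility.

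A second issue is that in your Case (1) you reach for an arbitrary pair $d_0\in[d]\cap A_0$, $d_2\in[d]\cap A_2$ and try to force $d_0<x$ with $d_0$ and $d_0+r$ of types $0$ and $2$, but you give no reason such a pair exists in that range, and more importantly this is not the lever the paper uses. The paper extracts from a $\mathcal{D}$-class $[j]$ a specific element $k_\alpha$ (or $k_\beta$) satisfying $\bh_\D(k_\alpha)=-\bh_\D(|\alpha|-k_\alpha)$ together with one of two precise ``flip'' conditions at $k_\alpha+r$; it is exactly these relations, fed through the triples $(x-k,k,x)$, $(x-k,k+r,x+r)$ in Lemmas \ref{lem:GeneralBTvW1}--\ref{lem:GeneralBTvW2}, that propagate undeterminedness to $x-k$ or $x-k+m-1$ and contradict minimality. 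Without identifying such a $k_\alpha$ you cannot invoke those lemmas — note their hypotheses are about equalities of the form $\bh_\D(i)=-\bh_\D(|\alpha|-i)$, not merely about types — and there is also a minor slip in that you cite Lemma \ref{lem:sym} as if $[x-d_0]\in\mathcal{A}$ implied ``undetermined''; the lemma gives only the converse implication, so the contradiction in that branch has to come from the minimality of $x=\min\{j:[j]\in\mathcal{A}\}$, which you should state explicitly. Finally, the paper's Case (2) ($[m-1]\in\mathcal{D}$, $x>m-1$) occupies most of the proof and includes a further dichotomy on $(m-1)\mid x'$ that your outline does not anticipate; so even absent the error above, the heavy lifting remains to be done.
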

	
	\begin{proof}
		Suppose not, and let $x$ be the smallest undetermined element. Then, as in Proposition \ref{lem:B1}, 
		$$m-1\leq x\leq (r+m-1)/2.$$
		Since $\mathcal{D}\neq\varnothing$, let $[j]\in\mathcal{D}$. Then 
		\vspace{2mm}
		\begin{enumerate}[label=(\Roman*)]
			\item $[j]=[m-1]$ with $$\bh_\F(k)=-\bh_\F(|\alpha|-k)=+$$ for all $k\in[j]$ such that $m-1\leq k\leq |\alpha|-m+1$ and $k\equiv m-1 \mod r$, and $$\bh_\F(|\alpha|-m+1+k)=-\bh_\F(|\alpha|+|\beta|-m+1-k)=-$$ for all $k\in[j]$ such that $m-1\leq k\leq |\beta|-m+1$ and $k\equiv m-1 \mod r$. In particular, $|\alpha|\in A_1$ and $|\beta|\in A_0$.\\
			\item There exists $k_\alpha\in[j]$ with $1\leq k_\alpha\leq |\alpha|-1-r$ {\emma such that} $\bh_\F(k_\alpha)=-\bh_\F(|\alpha|-k_\alpha)$ and one of the following {\emma holds}:\vspace{2mm}
			\begin{enumerate}[label=(\alph*)]
				\item $\bh_\F(k_\alpha+r)=\bh_\F(|\alpha|-k_\alpha-r)$;
				\item $\bh_\F(k_\alpha)=-\bh_\F(|\alpha|-k_\alpha)=-\bh_\F(k_\alpha+r)=\bh_\F(|\alpha|-k_\alpha-r)$.\vspace{2mm}
			\end{enumerate}
		    {\emma Alternatively}, there exists $k_\beta\in[j]$ with $1\leq k_\beta\leq |\beta|-1-r$ such that $\bh_\F(|\alpha|-m+1+k_\beta)=-\bh_\F(|\alpha|+|\beta|-m+1-k_\beta)$ and one of the following holds:\vspace{2mm}
			\begin{enumerate}[label=(\alph*)]
				\item $\bh_\F(|\alpha|-m+1+k_\beta+r)=\bh_\F(|\alpha|+|\beta|-m+1-k_\beta-r)$;
				\item $\bh_\F(|\alpha|-m+1+k_\beta)=-\bh_\F(|\alpha|+|\beta|-m+1-k_\beta)=-\bh_\F(|\alpha|-m+1+k_\beta+r)=\bh_\F(|\alpha|+|\beta|-m+1-k_\beta-r)$.
			\end{enumerate}
		\end{enumerate}
		If the class $[j]$ {\emma does} not satisfy (II), then for all $1\leq k\leq |\alpha|+|\beta|-m+1-r$ with $k\in[j]$, the equality $\bh_\F(k)=\bh_\F(k+r)$ holds, except when $k<|\alpha|\le k+r$. 
		
		If $[j]\subseteq A_0\cup A_2$, then $[j]\in\mathcal{C}$, contradicting $[j]\in\mathcal{D}$. Otherwise, $[j]=[m-1]$ and satisfies condition (I); in particular $\bh_\F(|\alpha|-m+1)=\bh_\F(|\alpha|)=-$. 
		
		As in Proposition \ref{lem:B1}, we {\emma analyze} the following cases and {\emma derive} a contradiction {\emma in each}, with the {\emma goal} of showing that all elements must be determined.
		
		\begin{enumerate}
			\item $x=m-1$;
			\item $[m-1]\in\mathcal{D}$ ({\emma which implies} $x>m-1$);
			\item $x>m-1$, {\emma there exists} $[k_{\alpha}]\in\mathcal{D}$ with $[k_{\alpha}]\ne [m-1]$, and $[m-1]\in\mathcal{C}$;
			\item $x>m-1$, {\emma there exists} $[k_{\beta}]\in\mathcal{D}$ with $[k_{\beta}]\ne [m-1]$, and $[m-1]\in\mathcal{C}$.
		\end{enumerate}
		
		We emphasize that if $x>m-1$ and $[m-1]\in\mathcal{A}$, then $m-1\in A_1$ is determined, and Lemma \ref{lem:final} {\emma completes the argument}. Hence we need {\emma only} to consider the case $[m-1]\in\mathcal{C}\cup\mathcal{D}$. 
		
		For case (II), we focus on the subcase {\emma where} $k_\alpha$ exists; the subcase {\emma involving} $k_\beta$ is analogous {\emma and its details are omitted.}
		
		Case (1): $x=m-1$. Suppose $k_\alpha$ exists. Then $k_\alpha\notin[m-1]$, since $[m-1]\in\mathcal{A}$. For types (a) or (b) of $k_\alpha$, applying (1) of Lemmas \ref{lem:GeneralBTvW1} and \ref{lem:GeneralBTvW2} to the triple $$(k_\alpha,|\alpha|-k_\alpha-r,|\alpha|-r),$$
		we find that $|\alpha|-r\in [m-1]$ must be determined, contradicting {\emma the assumption} that $x=m-1$ is undetermined. 
		
		Case (2): $[m-1]\in\mathcal{D}$. 
		Suppose that we are in case (I). {\emma We prove that 
		\begin{align*}
			\mathrm{Case (2)+Case (I)} 
			&\Longrightarrow [x-m+1]\in\mathcal{B},
		\end{align*}
		contradicting that $\mathcal{B}=\varnothing$.} In this case, $m-1\in A_2$, 
		$|\alpha|\in A_1$ and $|\beta|\in A_0$. By {\emma the} minimality of $x$, {\emma the element} $x-m+1$ is determined. Let $t\geq 0$ be any integer such that $x+tr<|\alpha|$. Then, $x-m+1+tr\in[x-m+1]$ is also determined. From the triple $(x-m+1+tr,m-1,x+tr)$, Lemma \ref{lem:BTvW2} implies that $$x-m+1+tr\in A_0\cup A_2.$$ Using the same triple,
		\begin{itemize}
			\item if $\bh_\F(x-m+1+tr)=\bh_\F(|\alpha|-x+m-1-tr)$, then Lemma \ref{lem:GeneralBTvW2} (1) {\emma implies} that $x+tr\in[x]$ is determined, a contradiction; \vspace{1mm}
			\item if $\bh_\F(x-m+1+tr)=-\bh_\F(|\alpha|-x+m-1-tr)=\bh_\F(m-1)=+$, then Lemma \ref{lem:GeneralBTvW1} (1) implies that $x+tr\in[x]$ is determined, again a contradiction.\vspace{1mm}
		\end{itemize}
		Hence we must have 
		\begin{align*}
			\bh_\F(x-m+1+tr)=-\bh_\F(|\alpha|-x+m-1-tr)=-.
		\end{align*}
		On the other hand, let $s\geq 0$ be any integer such that $x+sr<|\beta|$. Then $x-m+1+sr\in[x-m+1]$ is also determined. We claim that 
		\begin{align}\label{E:sr}
			\bh_\F(|\alpha|+x-2m+2+sr)=\bh_\F(|\alpha|+|\beta|-x-sr).
		\end{align}
		If $\bh_\F(|\alpha|+x-2m+2+sr)=-\bh_\F(|\alpha|+|\beta|-x-sr)$, then 
		{\emma applying Lemma \ref{lem:GeneralBTvW1} (4)} to the triple $(x-m+1+sr,m-1,x+sr)$ 
		 implies that $x+sr\in[x]$ is determined, a contradiction. Therefore, we prove the claim (\ref{E:sr}).
		 
		It follows that there exists some $j\in [x-m+1]$ such that $\bh_\F(j)=-\bh_\F(|\alpha|+|\beta|-m+1-j)$. That is, $[x-m+1]\in\mathcal{B}$, contradicting the assumption $\mathcal{B}=\varnothing$.
		
		Now suppose that we are in case (II). 
    
		Since $[m-1]\in\mathcal{D}$, let $[k_\alpha]=[m-1]$. Lemma \ref{lem:symmetry} guarantees that 
		{\emma exactly one of $k_{\alpha}$ and $|\alpha|-k_{\alpha}$ is congruent to $0\!\mod r$. Choose that element and denote it by $k$. Thus} 
		 $$k\equiv 0\mod r,$$ 
		 and in particular, $\bh_\F(k)=-\bh_\F(|\alpha|-k)$. Moreover, since $r\mid k\ne 0$, we have 
		 $$x<k<|\alpha|-m+1,$$  
		 in view of $x<r+m-1-x<r$ and $k_{\alpha}\ne m-1$. This also yields that $r<|\alpha|-m+1$. We now divide the undetermined elements $x'$ such that $x'<k$ into two cases:
		\begin{itemize}
			\item there exists {\emma such} an $x'$ with $(m-1)\nmid x'$; 
			\item {\emma every such} $x'$ satisfies $(m-1)\mid x'$. 
		\end{itemize}
		For the first case, {\emma we prove that 
			\begin{align*}
				\mathrm{Case (2)+Case (II)}&\Longrightarrow 
				\mbox{Claim: If } x' \mbox{ is undetermined, then } x'-m+1 \mbox{ is undetermined.}
			\end{align*}
		 Suppose that the claim is proved, then there {\emma would} exist an undetermined element in $[1,m-1]$. {\emma However},  all elements less than $m$ are determined by Lemma \ref{lem:sym} and $[m-1]\in\mathcal{D}$, a contradiction.}
		
		We now prove the claim. For $m-1\in A_0\cup A_2$, suppose that $x'<k$ is undetermined {\emma while} $x'-m+1$ is determined. From the triple $(m-1,x'-m+1,x')$, Lemma \ref{lem:BTvW2} guarantees that $x'-m+1\in A_0\cup A_2$. We also point out that $x'-m+1\ne m-1$, since $(m-1)\nmid x'$. 
		
		If $k=k_{\alpha}$, consider the triples $$(x'-m+1,k-x'+m-1,k)
		\quad \mbox{ and }\quad(x'-m+1,k-x'+m-1+r,k+r),$$ Lemmas \ref{lem:GeneralBTvW1} and \ref{lem:GeneralBTvW2} show that the relation $\bh_{\F}(i)=-\bh_{\F}(|\alpha|-i)$ {\emma holds for} $i=k,k+r$ and {\emma moreover} $$\bh_{\F}(x'-m+1)=\bh_{\F}(k)=\bh_{\F}(k+r).$$ {\emma This contradicts the defining property} of $k$. Hence $x'-m+1$ must be undetermined.
		
		If {\emma instead} $k=|\alpha|-k_{\alpha}$, then $k_{\alpha}\equiv m-1\mod r$. Since $k_{\alpha}+r<|\alpha|$, we have $k_{\alpha}\le |\alpha|-2r$, and consequently $k_{\alpha}+x'-m+1+r<|\alpha|-m+1$, i.e., $k-x'+m-1-r>0$.
		By the triples $$(x'-m+1,k-x'+m-1,k)\quad\mbox{ and }\quad(x'-m+1,k-x'+m-1-r,k-r),$$ we {\emma again} derive a contradiction. {\emma Therefore}, $x'-m+1$ must be undetermined, as desired.
		
		For the second case, {\emma we prove that 
			\begin{align*}
				\mathrm{Case (2)+Case (II)}&\Longrightarrow 
				\mbox{Claim: If } x' \mbox{ is undetermined, then } x'+m-1 \mbox{ is undetermined.}
		\end{align*}
		Suppose that the claim is proved, then $k$ must be undetermined, since $(m-1)\mid k$ and $k>x'$. This is indeed a contradiction.}
		
		We now prove the claim.
		Since $x<k$ and $r+m-1-x<k$ are {\emma both} undetermined, we have $(m-1)\mid x$ and $(m-1)\mid (r+m-1-x)$, hence $(m-1)\mid r$. {\emma It follows that} $(m-1)\mid k$ and $(m-1)\mid (|\alpha|-k)$. By Proposition \ref{lem:main}, $x$ must be the smallest integer for which $[x]\in\mathcal{A}$.
		
		Now suppose $x'$ is undetermined. Note that $k-x'\ne m-1$; otherwise, Lemma \ref{lem:GeneralBTvW1} {\emma applied to} the triple $(m-1,x',k)$ with $k<|\alpha|-m+1$ {\emma would imply} that $x'$ is determined, contradicting the assumption.
		
		If $k=k_\alpha$, applying Lemmas \ref{lem:GeneralBTvW1} and \ref{lem:GeneralBTvW2} to the triples $(k-x',x',k)$ and $(k-x',x'+r,k+r)$ with $k-x'\ne m-1$, {\emma we obtain} that $k-x'$ is undetermined. Therefore, $x'+m-1\in [k-x']$ is undetermined, as desired. 
		
		If instead $k=|\alpha|-k_{\alpha}$, by the triples $(k-x'-r,x'+r,k)$ and $(k-x'-r,x',k-r)$, we {\emma similarly}  conclude that $k-x'$ is undetermined, which {\emma again} implies $x'+m-1\in [k-x']$ is undetermined, as desired.
		
		Case (3): The conditions $x>m-1$ and $[m-1]\in\mathcal{C}$ imply that either
		\begin{enumerate}[label=(\roman*)]
			\item $[m-1]\subseteq A_0$, or
			\item $m-1\in A_2$, $|\alpha|\in A_1$ and $|\beta|\in A_1$.
		\end{enumerate}  
		We assume that $k_{\alpha}<|\alpha|-k_{\alpha}$; the case $|\alpha|-k_\alpha<k_\alpha$ {\emma is analogous}. We prove that 
		\begin{align*}
			\mathrm{Case (3)+Case (II)}\Longrightarrow \mbox{Claim}: k-x+m-1 \mbox{ is undetermined.}
		\end{align*}
		If this claim is true, then $x-m+1\ne m-1$; otherwise, from the triple $$(x-m+1,k-x+m-1,k),$$ Lemmas \ref{lem:GeneralBTvW1} and \ref{lem:GeneralBTvW2} imply that $k-x+m-1$ is determined. Furthermore, from the triples  $(x-m+1,k-x+m-1,k)$ and $(x-m+1,k-x+r+m-1,k+r)$, Lemmas \ref{lem:GeneralBTvW1} and \ref{lem:GeneralBTvW2} indicate that  $x-m+1$ must be undetermined, contradicting the minimality of $x$.
		
		We {\emma now} prove the claim that $k-x+m-1$ is undetermined. 
		If at least one of $k_\alpha$ and $|\alpha|-k_\alpha$ is less than $x$, say $1\le k=k_{\alpha}<x$. For both (i) and (ii), 
		note that $x-k\ne m-1$ for $m-1\in A_2$, by Lemma \ref{lem:GeneralBTvW1} applied to the triple $(m-1,k,x)$. Consider the triples $(x-k,k,x)$ and $(x-k,k+r,x+r)$. It follows from Lemmas \ref{lem:GeneralBTvW1} and \ref{lem:GeneralBTvW2} that $x-k$ is undetermined, contradicting the minimality of $x$. 
		
		{\emma Now assume} both $k_{\alpha}$ and $|\alpha|-k_{\alpha}$ are larger than $x$, i.e., $$x< k_\alpha<|\alpha|-k_\alpha<|\alpha|-x.$$ 
		Let $k=k_{\alpha}$. First, $k-x\ne m-1$ for $m-1\in A_2$ by Lemma \ref{lem:GeneralBTvW1} and the triple $(m-1,x,k)$. Then, for both (i) and (ii), applying Lemmas \ref{lem:GeneralBTvW1} and \ref{lem:GeneralBTvW2} to the triples 
		$$(k-x,x,k)\quad \mbox{ and }\quad (k-x,x+r,k+r),$$
		we find that $k-x$ {\emma must} be undetermined.
		
		Suppose $k-x+m-1$ is determined. Then $k-x+m-1+r$ is also determined. For $m-1\in A_2$, to apply Lemmas \ref{lem:GeneralBTvW1} and \ref{lem:GeneralBTvW2}, we {\emma must} verify that 
		\begin{align}\label{E:ine}
		k-x+m-1+r\ne |\alpha|-m+1.
		\end{align} 
		{\emma If equality held}, Lemma \ref{lem:GeneralBTvW1} {\emma applied to} the triple $$(k,r+m-1-x,|\alpha|-m+1)$$ 
		{\emma would imply} that $r+m-1-x$ is determined, contradicting {\emma the fact} that $r+m-1-x\in [x]$ is undetermined. Hence (\ref{E:ine}) holds for $m-1\in A_2$.
		
		From the triple $$(k,r-x+m-1,k-x+m-1+r),$$ 
		where $k-x+m-1+r<k+r\le |\alpha|-1$, Lemmas \ref{lem:GeneralBTvW1} and \ref{lem:GeneralBTvW2} prove that 
		\begin{align*}
			\bh_\F(k-x+m-1+r)&=-\bh_\F(|\alpha|-k+x-m+1-r),\\
			\bh_{\F}(k-x+m-1+r)&=\bh_{\F}(k).
		\end{align*}
		{\emma We now analyze subcases (i) and (ii) separately to prove the claim that the element $k-x+m-1$ is determined.}
		
		Subcase (i): 
		By the triple $(m-1,k-x+r,k-x+m-1+r)$ and Lemma \ref{lem:GeneralBTvW2}, we find that $k-x+r$ is determined, contradicting {\emma the fact} that $k-x+r\in [k-x]$ is undetermined. This implies that  $k-x+m-1\in A_1$ is undetermined. 
		
		Subcase (ii): Consider the triple $(k-x,r+m-1,k-x+m-1+r)$ with $\bh_\F(r+m-1)=\bh_\F(|\alpha|-r-m+1)=+$, since $[m-1]\in\mathcal{C}$. Lemma \ref{lem:GeneralBTvW2} then asserts that $k-x$ is determined, a contradiction. Thus $k-x+m-1$ is undetermined.

		Case (4): {\emma This case follows by} an argument {\emma analogous} to that for case (3), so we omit the details.
		
		{\emma In conclusion}, all elements must be determined provided that $\F$ is canonical,  $\mathcal{B}=\varnothing$ and $\mathcal{D}\ne\varnothing$.
	\end{proof}

    \section{Final remarks}\label{S:finalremark}
    
    In this paper, we have characterized the equivalence classes of connected skew diagrams containing exactly one \(2\times m\) or \(m\times 2\) block of boxes.  In contrast to the ribbon case, where equivalence classes can be arbitrarily large, the presence of a single such block forces any nontrivial
    factorization to have only two potentially reversible ribbon factors. Consequently, the possible sizes of an equivalence class in this setting are \(1\), \(2\), and \(4\).
    
    The situation changes when the number of \(2\times m\) blocks is allowed to grow. Since ribbon equivalence classes can be arbitrarily large, the same holds for these thickened ribbons as the
    number of blocks increases. From the viewpoint of the \(h\)-expansion, each additional block enlarges the family of coarsenings that carry factors of \(h_{m-1}\), and therefore enlarges the possible cancellation patterns in Proposition \ref{prop:JT}.  Our proof illustrates the difficulty of controlling these
    cancellations even in the first non-ribbon case, with only one \(2\times m\) or \(m\times 2\) block.
    
    Olson-Harris's Hopf-algebraic proof \cite{OH:24} of the sufficient direction of Conjecture \ref{Conj:1} provides a complementary perspective. The difficulty with necessity is precisely that their argument is forward-looking: it begins with a specified factorization and proves that the associated composition preserves an already known equivalence. A proof of necessity requires additional rigidity capable
    of reconstructing the factorization data from the symmetric function. In the one-block setting, the coefficient-level analysis of the \(h\)-expansion provides exactly this kind of rigidity.
    
    This suggests returning to the original motivation for studying
    \(m\)-regular thickened ribbons: reducing part of the necessity problem to
    the already classified ribbon case.  For example, suppose that
    \begin{equation}\label{eq:final-thick-factorization}
    	\F=\bigl(\cdots((\E_1\circ \E_2)\circ \E_3)\cdots\bigr)\circ_\W \E_q
    \end{equation}
    is an irreducible factorization, where each \(\E_i\) is a ribbon and
    \(\W=(m-1)\).  Define
    \begin{equation}\label{eq:final-ribbonization}
    	\operatorname{Ribbon}(\F)
    	=\bigl(\cdots((\E_1\circ\E_2)\circ \E_3)\cdots\bigr)\circ \E_q
    \end{equation}
    by replacing the final \(\circ_\W\) in
    \eqref{eq:final-thick-factorization} with ordinary ribbon composition. A natural question is whether it induces a bijection, or at least a cardinality-preserving correspondence, between \([F]\) and \([\operatorname{Ribbon}(F)]\).  Such a result would provide a factor-reconstruction mechanism of the kind that is missing from the forward Hopf-algebraic argument.

	\section{Acknowledgements}
	
	We would like to thank the referees for their insightful comments and constructive suggestions.
	
	The first author was supported by the Austrian Research Fund (FWF) Elise-Richter Project V 898-N and by the National Natural Science Foundation of China (NSFC) under Project No. 12201529, and is currently supported by the NSFC under Project No. 12571358.

	The second author was supported in part by the Provincial Nature Science Foundation of Shandong, Project No. ZR2024QA026 and the Fundamental Research Funds for the Central Universities.

\end{document}